\documentclass[11pt,a4paper]{article}
\usepackage{amsmath, amsfonts, amsthm, amssymb, graphicx, xcolor, dsfont, enumerate, dcolumn,subcaption}
\usepackage[numbers,square, comma, compress]{natbib}
\usepackage{threeparttable}

\usepackage{hyperref}

\usepackage{tikz}

\setlength{\textheight}{24cm} \setlength{\textwidth}{15.5cm}
\setlength{\topmargin}{-2.5cm} \hoffset=-1.2cm

\newtheoremstyle{mystyle}
  {}
  {}
  {\itshape}
  {}
  {\bfseries}
  {.}
  { }
  {}

\theoremstyle{mystyle}

\newtheorem{lemma}{Lemma}

\newtheorem{theorem}{Theorem}
\newtheorem{proposition}{Proposition}
\newtheorem{corollary}{Corollary}
\newtheorem{conjecture}{Conjecture}
\newtheorem{observation}{Observation}
\newtheorem{ques}{Question}

\newcommand{\h}{{\frak h}}

\begin{document}

\title
{Few hamiltonian cycles in graphs\\ with one or two vertex degrees}
\author{{\sc Jan GOEDGEBEUR\footnote{Department of Computer Science, KU Leuven Campus Kulak-Kortrijk, 8500 Kortrijk, Belgium}\;\footnote{Department of Applied Mathematics, Computer Science and Statistics, Ghent University, 9000 Ghent, Belgium}\;,
Jorik JOOKEN\footnotemark[1]\;,
On-Hei Solomon LO\footnote{Faculty of Environment and Information Sciences, Yokohama National University, 79-2 Tokiwadai, Hodogaya-ku, Yokohama 240-8501, Japan}\;, }\\[1mm]
Ben SEAMONE\footnote{Mathematics Department, Dawson College, Montreal, QC, Canada}\;\footnote{D\'epartement d'informatique et de recherche op\'erationnelle, Universit\'e de Montr\'eal, Montreal, QC, Canada}\;\,, and {\sc Carol T. ZAMFIRESCU\footnotemark[2]\;\footnote{Department of Mathematics, Babe\c{s}-Bolyai University, Cluj-Napoca, Roumania}}\;\footnote{E-mail addresses: jan.goedgebeur@kuleuven.be; jorik.jooken@kuleuven.be;  ohsolomon.lo@gmail.com; ben.seamone@gmail.com; czamfirescu@gmail.com}}

\date{}

\maketitle

\begin{abstract}
\noindent We fully disprove a conjecture of Haythorpe on the minimum number of hamiltonian cycles in regular hamiltonian graphs, thereby extending a result of Zamfirescu, as well as correct and complement Haythorpe's computational enumerative results from [\emph{Experim.\ Math.} \textbf{27} (2018) 426--430]. Thereafter, we use the Lov\'asz Local Lemma to extend Thomassen's independent dominating set method. Regarding the limitations of this method, we answer a question of Haxell, Seamone, and Verstraete, and settle the first open case of a problem of Thomassen. Motivated by an observation of Aldred and Thomassen, we prove that for every $\kappa \in \{ 2, 3 \}$ and any positive integer $k$, there are infinitely many non-regular graphs of connectivity~$\kappa$ containing exactly one hamiltonian cycle and in which every vertex has degree $3$ or $2k$.

\bigskip\noindent \textbf{Keywords:} Hamiltonian cycle; regular graph; Lov\'asz Local Lemma

\bigskip\noindent \textbf{MSC 2020:} 05C45, 05C85 
\end{abstract}


\section{Introduction}
\label{sect:intro}

Motivated by Sheehan's famous conjecture stating that every hamiltonian 4-regular graph contains at least two hamiltonian cycles~\cite{Sh75}, Fleischner's surprising result that there exist graphs in which every vertex has degree 4 or 14 and which contain exactly one hamiltonian cycle~\cite{Fl14}, as well as recent work of Haythorpe~\cite{Ha18}, in this paper we investigate bounds for the minimum number of hamiltonian cycles occurring in hamiltonian graphs in which the set of distinct vertex degrees contains at most two elements. For further results treating Sheehan's conjecture and its variations we point to the recent articles~\cite{GKN19,GMZ20,Za22} and references therein.

We say that a graph is \textit{$(k,\ell)$-regular} if all of its vertices are of degree $k$ or $\ell$ and there is at least one vertex of degree $k$ and at least one vertex of degree $\ell$; $k = \ell$ is allowed, in which case we simply write \emph{$k$-regular}. 
We will denote by $h_n(k,\ell)$ the minimum number of hamiltonian cycles of any hamiltonian $(k,\ell)$-regular graph of order $n$, and put
$$h_n(k) := h_n(k,k), \quad h(k, \ell) := \min_{n}h_n(k, \ell), \quad {\rm and} \quad h(k) := \min_{n}h_n(k).$$
In case no hamiltonian $(k,\ell)$-regular graph of order $n$ exists, we write $h_n(k,l):=\infty$. Aside from trivial lower bounds, related to the important quartic case we know that $h(4) \le 12$, $\limsup_{n \to \infty} h_n(4) \le 144$, and $h(3,4) = h(4,14) = 1$, see~\cite{GMZ20,Za22,ES80,Fl14}, respectively. With the same idea as in the proof of Theorem~1 in~\cite{Za22}, one can show that for every integer $\ell \ge 4$ there exists a constant $c_\ell$, depending only on $\ell$, such that for infinitely many $n$ we have $h_n(4,\ell) \le c_\ell$. However, only for few $\ell$ the exact value of $c_\ell$ is known.

We will here be mostly concerned with \emph{upper} bounds for the above quantities, but the reader might wonder what can be said about \emph{lower} bounds. Unfortunately, not much. By extending a technique of Thomassen~\cite{Th98} it was proven by Haxell, Seamone, and Verstraete~\cite{HSV07} that $h(k) \ge 2$ for all $k \ge 23$. If both $k$ and $\ell$ are odd, then it follows from Thomason's ``lollipop'' technique~\cite{Th78} that $h(k,\ell) \ge 3$. On the other hand, expanding on what has been said above, Entringer and Swart~\cite{ES80} proved $h(3,4) = 1$ (further examples were given by Aldred and Thomassen in~\cite{HA99}, Royle~\cite{Ro17}, and in~\cite{GMZ20}) and Fleischner~\cite{Fl14}, as mentioned above, showed $h(4,14) = 1$.

Whenever a figure in this article shows, for a particular graph $G$, some of $G$'s edges displayed thicker than others, then the set of thin edges belongs to no hamiltonian cycle of $G$, and every thick edge belongs to at least one hamiltonian cycle of $G$---this facilitates checking certain arguments.

A number of results in this paper were obtained using computer-aided methods. 
In total these computations amounted to about 35 CPU years. 
To ensure the correctness of the results, we performed independent verifications in which we solve the same problem using different algorithms; we also present a human-readable proof where possible. We made the source code of our implementations and the certificates that can be used to independently verify our claims publicly available on GitHub~\cite{code}.

This article is organised as follows. In Section~\ref{sect:haythorpe_conjecture} we discuss a recent conjecture of Haythorpe~\cite{Ha18} revolving around the minimum number of hamiltonian cycles in hamiltonian $k$-regular graphs, where $k \ge 5$. In~\cite{Za22} it was proven that this conjecture does not hold for $k \in \{ 5, 6, 7 \}$. We here extend this result and show that it holds for \emph{no} $k$. In Section~\ref{sect:haythorpe_table} we correct certain inaccuracies occurring in Haythorpe's article~\cite{Ha18}---these revolve around computational results on the counting of hamiltonian cycles in regular graphs. In Section~\ref{sect:rigd} we first use probabilistic arguments involving the Lov\'asz Local Lemma to extend Thomassen's independent dominating set method. Thereafter, combining a gluing argument with computational methods, it is shown that for $k \in \{ 5, 6\}$ there exist infinitely many $k$-regular graphs with a hamiltonian cycle $\h$ containing no $\h$-independent dominating set; this answers a question of Haxell, Seamone, and Verstraete~\cite{HSV07} and settles the first open case of a problem of Thomassen~\cite{Th98}. The paper concludes with Section~\ref{sect:kl_regular}, in which we extend an observation of Aldred and Thomassen, proving that for every $\kappa \in \{ 2, 3 \}$ and any positive integer $k$, there are infinitely many non-regular graphs of connectivity~$\kappa$ containing exactly one hamiltonian cycle and in which every vertex has degree $3$ or $2k$. It is also shown that the smallest $(3,4)$-regular graph that contains exactly one hamiltonian cycle has order~18, and that all $(3, 2k + 1)$-regular graphs with exactly three hamiltonian cycles and at most 32~vertices are cubic.

\section{Counterexamples to Haythorpe's conjecture}
\label{sect:haythorpe_conjecture}

In~\cite{Ha18} Haythorpe published the following conjecture.
\begin{conjecture}[Conjecture 3.1 in~\cite{Ha18}]\label{conj:haythorpe}
For $d \ge 5$ and $n \ge d+3$, all hamiltonian $d$-regular graphs of order $n$ have at least $f(n,d) := (d-1)^2 [(d-2)!]^{\frac{n}{d+1}}$ hamiltonian cycles.
\end{conjecture}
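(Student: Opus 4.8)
The plan is to disprove Conjecture~\ref{conj:haythorpe}, which the abstract announces as false, by constructing for every $d \ge 5$ an infinite family of hamiltonian $d$-regular graphs whose number of hamiltonian cycles is strictly smaller than $f(n,d)$. Since the cases $d \in \{5,6,7\}$ are already settled in~\cite{Za22}, the real goal is a construction working uniformly for all (sufficiently large) $d$, generalising the idea behind Theorem~1 of~\cite{Za22}, with any remaining small cases certified by hand or by computer. The decisive observation is that $f(n,d)$ grows like $\beta_d^{\,n}$ with per-vertex base $\beta_d = [(d-2)!]^{1/(d+1)}$, which increases without bound in $d$; hence it suffices to produce $d$-regular hamiltonian graphs whose hamiltonian-cycle count grows like $\gamma^{\,n}$ for some base $\gamma < \beta_d$.

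First I would fix a small $d$-regular \emph{block} $B$ equipped with a bounded-size interface, that is, a fixed set of boundary vertices through which a spanning cycle must enter and leave, and then form a graph $G_m$ by arranging $m$ copies of $B$ in a cyclic necklace, joining consecutive interfaces by a fixed edge pattern chosen so that every vertex ends with degree exactly $d$. This requires designing $B$ so that each interface vertex has, inside $B$, degree $d$ minus the number of inter-block edges incident to it; one must then verify that $G_m$ is $d$-regular and exhibit one spanning cycle threading all blocks to confirm hamiltonicity.

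Next I would upper-bound the hamiltonian cycles of $G_m$ by a transfer-matrix argument. Because adjacent blocks communicate only through an interface of fixed size, the trace of any hamiltonian cycle on a single copy of $B$ is one of a bounded number of \emph{linkage states}, namely systems of vertex-disjoint paths that cover $B$ and pair up its interface vertices. Recording the compatible transitions between adjacent blocks in a constant-size matrix $T$, the number of hamiltonian cycles of $G_m$ equals, up to a constant factor, $\mathrm{tr}(T^m)$, and is therefore at most $C\lambda^{\,m}$, where $\lambda$ is the spectral radius of $T$. Writing $s=|V(B)|$, so that $n=sm$, this yields the per-vertex growth base $\gamma=\lambda^{1/s}$, and it remains to guarantee $\lambda^{1/s}<\beta_d$, equivalently $\lambda<[(d-2)!]^{\,s/(d+1)}$.

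Here the super-exponential growth of $(d-2)!$ works in our favour: for large $d$ the right-hand side is enormous, so any block with a genuinely bounded linkage-state count satisfies the inequality comfortably, and the growth-rate comparison becomes routine. Consequently I expect the main obstacle to be not this comparison but the uniform gadget design itself, namely producing a single parametrised block, definable for all $d$, that simultaneously ensures $d$-regularity after gluing, admits a hamiltonian cycle, and has a provably controlled spectral radius $\lambda$; establishing a rigorous bound on $\lambda$ for this explicit family is the delicate part. For the threshold values of $d$ not covered by~\cite{Za22}, and to certify the tight inequalities near the boundary, I would fall back on an explicit small counterexample located by computer search, after which letting $m\to\infty$ produces the desired infinite families and completes the disproof for every $d\ge 5$.
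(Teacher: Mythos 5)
Your overall strategy---a repeatable $d$-regular gadget whose hamiltonian-cycle count is compared against the exponential growth of $f(n,d)$---is indeed the shape of the paper's disproof (Theorem~\ref{haythorpeCounterExampleTheorem}), but as written your proposal has a genuine gap at exactly the point where you declare the work to be ``routine''. You claim that because $(d-2)!$ is enormous, ``any block with a genuinely bounded linkage-state count satisfies the inequality comfortably''. This is false: what must be bounded is not the dimension of the transfer matrix but its entries, i.e.\ the number of path systems traversing a block, and for a $d$-regular block one necessarily has $s\ge d+1$ vertices, at which scale natural blocks (anything resembling $K_{d+1}$) admit on the order of $(d-1)!$ hamiltonian traversals per $d+1$ vertices. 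The target threshold is $[(d-2)!]^{s/(d+1)}$, i.e.\ $(d-2)!$ per $d+1$ vertices, so such blocks overshoot by a factor of $d-1$ per block; the requirement is tight, not comfortable. Indeed, the paper's repeatable gadget (Lemma~2 of~\cite{Za22}, a $(d+1)$-vertex piece contributing a factor of exactly $(d-2)!$) achieves growth rate \emph{equal} to $\beta_d$, not strictly smaller, and the strict inequality against $f(n,d)$ is won entirely on the multiplicative constant: after cancellation it reduces to $2(d-1)^{d-4}<[(d-2)!]^{2-4/(d+1)}$, which the authors verify by computer for $5\le d\le 59$ and by Stirling's formula for $d\ge 60$. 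Your plan, which requires a strict inequality of growth rates $\gamma<\beta_d$, would need a gadget strictly better than the one known, and you give no construction or evidence that one exists.

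The second gap is that the entire content of the theorem lives in the explicit uniform gadget, which you defer. The paper exhibits it concretely (Fig.~\ref{constructionFigure}): a $d$-regular graph on $d^2+d-4$ vertices assembled from $d-2$ copies of $K_{d+1}^-$ together with a $(d+1)$-vertex subgraph having exactly two hamiltonian $vw$-paths, yielding exactly $2[(d-1)!]^{d-2}$ hamiltonian cycles before the repeatable part is inserted; verifying $d$-regularity requires the degree count $(d-3)(d-2)=2(d-3)+(d-3)(d-4)$. Finally, your fallback of locating small counterexamples by computer for ``threshold values of $d$'' cannot close the argument, since the conjecture must be refuted for infinitely many $d$ and a finite computation only ever handles finitely many of them; the paper's computer check is confined to the single two-variable inequality above on the finite range $5\le d\le 59$, with an asymptotic argument covering the rest.
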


\subsection{Counterexamples for all values of \boldmath$d$}

In~\cite{Za22} Conjecture~\ref{conj:haythorpe} was shown to not hold for $d \in \{ 5, 6, 7 \}$. By extending the approach from~\cite{Za22}, we show that Haythorpe's conjecture holds for \emph{no} integer $d \ge 5$.

\begin{theorem}
\label{haythorpeCounterExampleTheorem}

For arbitrary integers $k \ge 0$ and $d \ge 5$, we have
$$h_n(d) \le 2[(d-1)!]^{d-2}[(d-2)!]^k < f(n,d),$$
where $n := d^2+d-4+(d+1)k$.
\end{theorem}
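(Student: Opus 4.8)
The plan is to construct, for each $d \ge 5$ and each $k \ge 0$, an explicit hamiltonian $d$-regular graph on $n = d^2+d-4+(d+1)k$ vertices whose hamiltonian cycles can be counted exactly (or bounded above) by a product formula, and then to verify the inequality $2[(d-1)!]^{d-2}[(d-2)!]^k < f(n,d)$ by elementary estimates. Since the excerpt tells us that the method extends the approach from~\cite{Za22}, I would build the graph in a modular fashion: take a small ``core'' gadget that forces most of its edges to lie outside every hamiltonian cycle, and attach $k$ copies of a repeatable ``block'' gadget, each block contributing a multiplicative factor of $[(d-2)!]$ to the count and adding $d+1$ vertices (matching the $(d+1)k$ term in $n$ and the exponent $k$ in the bound). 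The factor $2[(d-1)!]^{d-2}$ should come from the fixed core together with the $k=0$ base case on $d^2+d-4$ vertices.

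The key steps, in order, are as follows. First I would describe the building block: a $d$-regular (or near-$d$-regular, to be corrected at the gluing seams) piece in which, for every way a hamiltonian cycle can traverse it, the number of internal routings is exactly $(d-2)!$, typically by arranging that the cycle must pass through a complete-graph-like gadget $K_{d-1}$ or $K_{d+1}$ in which the number of hamiltonian paths between two prescribed endpoints is $(d-2)!$. Second, I would glue $k$ such blocks in series along a backbone so that the choices in distinct blocks are independent, giving a product of $(d-2)!$ over the blocks. Third, I would attach the fixed core so that its contribution is the factor $2[(d-1)!]^{d-2}$, again by a direct count of internal hamiltonian-path completions, and check that the whole graph is $d$-regular with exactly the claimed order. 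Finally, I would prove the strict inequality $2[(d-1)!]^{d-2}[(d-2)!]^k < (d-1)^2[(d-2)!]^{n/(d+1)}$: substituting $n = d^2+d-4+(d+1)k$ gives exponent $n/(d+1) = \tfrac{d^2+d-4}{d+1} + k$, so the $[(d-2)!]^k$ factors cancel and the comparison reduces to a $k$-independent inequality $2[(d-1)!]^{d-2} < (d-1)^2 [(d-2)!]^{(d^2+d-4)/(d+1)}$, which I would verify by taking logarithms and using $\log((d-1)!) = \log(d-1) + \log((d-2)!)$ together with crude bounds on $(d^2+d-4)/(d+1) \approx d$.

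I expect the main obstacle to be the exact hamiltonian-cycle \emph{count} of the constructed graph, rather than the final numerical inequality. Two things must be controlled simultaneously: one must show that the thin (forbidden) edges of the core genuinely lie on no hamiltonian cycle, so that every hamiltonian cycle is forced into the intended backbone structure, and one must show that within that structure the internal routings multiply cleanly, i.e.\ that the choices made in different gadgets are truly independent and that no ``crossing'' hamiltonian cycle evades the product decomposition. Establishing these forcing and independence properties---presumably with the aid of the thick/thin edge convention announced in the introduction and possibly a computer check for the core---is where the real work lies; once the count is pinned down as $2[(d-1)!]^{d-2}[(d-2)!]^k$, the remaining inequality is routine.
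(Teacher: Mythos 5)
Your proposal follows essentially the same route as the paper's proof: a fixed core graph on $d^2+d-4$ vertices with exactly $2[(d-1)!]^{d-2}$ hamiltonian cycles (built around $d-2$ copies of $K_{d+1}$ minus an edge, each contributing a factor $(d-1)!$), extended by $k$ serial gadgets of $d+1$ vertices each multiplying the count by $(d-2)!$, and then the cancellation of $[(d-2)!]^k$ reducing the second inequality to the $k$-independent statement $2(d-1)^{d-4} < [(d-2)!]^{2-4/(d+1)}$. The one caveat is that this final inequality is less ``routine'' than you suggest: crude logarithmic/Stirling estimates only succeed for large $d$ (the paper applies Stirling for $d \ge 60$ and resorts to a computer check for $5 \le d \le 59$; at $d=5$ the margin is merely $8 < 6^{4/3} \approx 10.9$, which crude bounds will not capture).
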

\begin{proof}

The proof of the first inequality is structurally identical to the proof of Theorem~3 from~\cite{Za22}, so we will be succinct. Fig.~\ref{constructionFigure} shows a $d$-regular graph on $d^2+d-4$ vertices---see the figure's caption for an exact description of the graph's structure---with exactly $2((d-1)!)^{d-2}$ hamiltonian cycles. 
In Fig.~\ref{constructionFigure}, the graph induced by $v, w$, and the vertices marked by white disks has order~$d+1$. 
It has exactly two hamiltonian $vw$-paths. By applying Lemma~2 from~\cite{Za22}, this first part of the proof is completed.

\begin{figure}[h!]
\begin{center}
  \includegraphics[height=70mm]{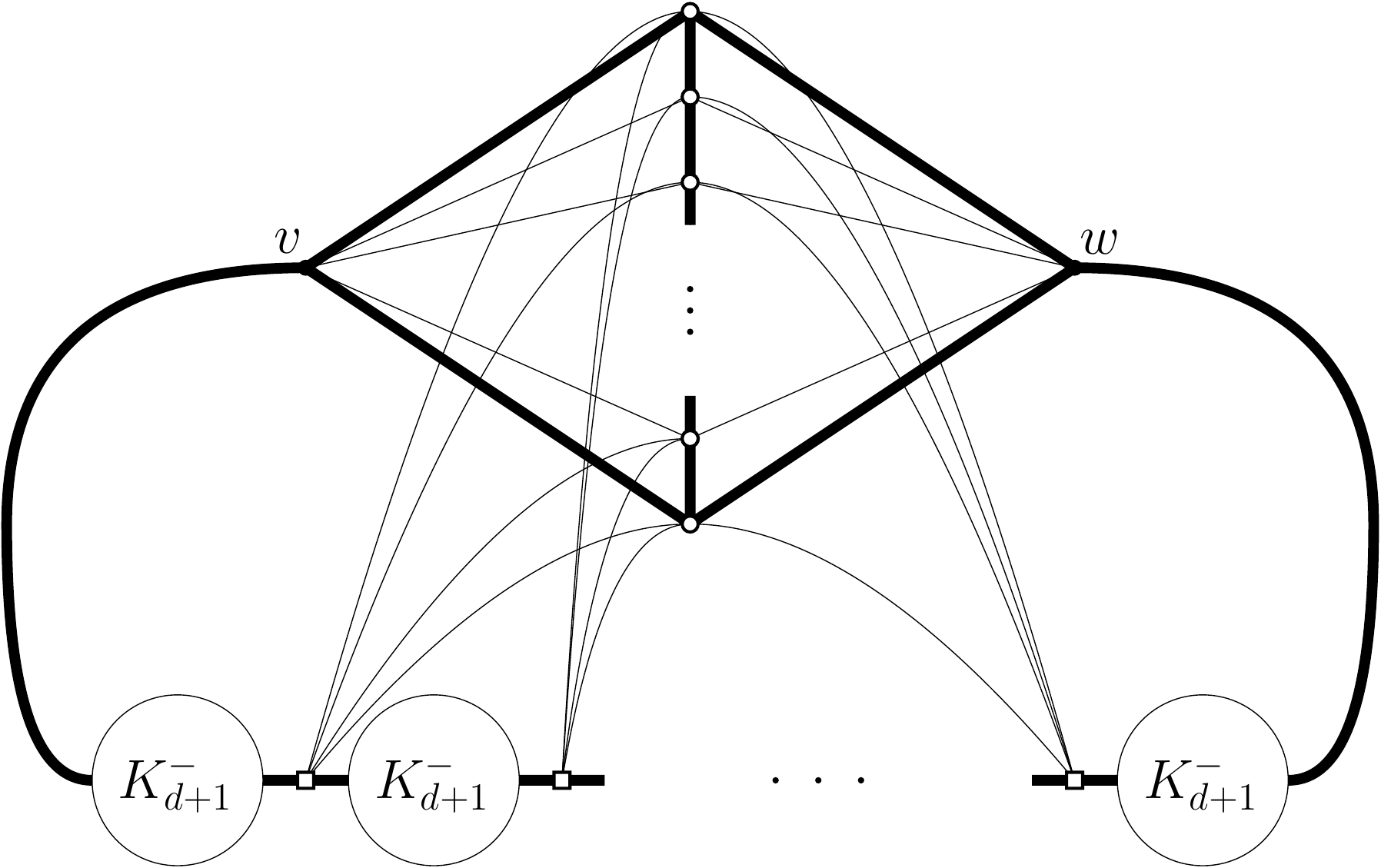}
  \caption{A $d$-regular graph. $K_{d+1}^-$ stands for a complete graph on $d+1$ vertices minus the horizontal edge, and there are $d - 2$ copies of these graphs present. There are $d-3$ white squares and $d-1$ white disks. No two white squares are connected; the graph induced by the set of all white disks is a path; and white squares and white disks are connected in any way that produces a $d$-regular graph---this is possible as the number of edges leaving white squares, $(d-3)(d-2)$, is equal to the number of edges leaving white disks, $2(d-3) + (d-3)(d-4)$.}\label{constructionFigure}
\end{center}
  \end{figure}

We now prove the second inequality, i.e.\ show that
$$2[(d-1)!]^{d-2}[(d-2)!]^k<(d-1)^2 [(d-2)!]^{\frac{d^2+d-4+(d+1)k}{d+1}}.$$
Rewrite the left-hand side as
\begin{align*}
2(d-1)^{d-2}[(d-2)!]^{d-2}[(d-2)!]^k
\end{align*}
and the right-hand side as
\begin{align*}
(d-1)^2 [(d-2)!]^{d+k-\frac{4}{d+1}}=(d-1)^2 [(d-2)!]^{2-\frac{4}{d+1}} [(d-2)!]^{d-2}[(d-2)!]^{k}.
\end{align*}
By cancelling out common factors, this leads to

$$2[(d-1)!]^{d-2}[(d-2)!]^k <(d-1)^2 [(d-2)!]^{\frac{d^2+d-4+(d+1)k}{d+1}},$$
i.e.
$$2(d-1)^{d-4} <[(d-2)!]^{2-\frac{4}{d+1}}.$$

For $5 \le d \le 59$, we verified with a computer that the inequality holds. For $d \ge 60$, the inequality can be proven via Stirling's formula, and by using the fact that a polynomial of degree four with variable $d$ is larger than a polynomial of degree three for a sufficiently large value of $d$ (in our case, $d \ge 60$):
\begin{equation*}
\begin{split}
[(d-2)!]^{2-\frac{4}{d+1}} & > [(d-2)!]^{\frac{3}{2}} >  \left[\sqrt{2 \pi (d-2)} \left(\frac{d-2}{e}\right)^{d-2}\right]^{\frac{3}{2}} > 2 \left[\left(\frac{d-2}{e}\right)^{4}\right]^{\frac{3}{8}d-\frac{3}{4}}\\
& > 2 \left[\left(d-1\right)^{3}\right]^{\frac{3}{8}d-\frac{3}{4}}>2(d-1)^{d-4}.
\end{split}
\end{equation*}
\end{proof}

\subsection{Improved upper bounds for the \boldmath$6$- and \boldmath$7$-regular case}

The following result was proven in~\cite{Za22}. As already mentioned, it shows that Conjecture~\ref{conj:haythorpe} does not hold for $d \in \{5, 6, 7 \}$.

\begin{theorem}[Theorem 3 from~\cite{Za22}]
\label{ZamfirescuTheorem}
For every non-negative integer $k$ there exists (i) a $5$-regular graph on $26 + 6k$ vertices with exactly $2^{k+10} \cdot 3^{k+3}$ hamiltonian cycles, (ii) a $6$-regular graph on $39 + 7k$ vertices
with exactly $2^{3(k+4)} \cdot 3^{k+4} \cdot 5^{5}$ hamiltonian cycles, and (iii) a $7$-regular graph on $54+8k$ vertices with exactly $2^{3(k+8)} \cdot 3^{k+10} \cdot 5^{k+5}$ hamiltonian cycles.
\end{theorem}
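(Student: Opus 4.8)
The plan is to realise each of the three families as the output of the same cyclic gluing construction used in the proof of Theorem~\ref{haythorpeCounterExampleTheorem}. For each $d \in \{5,6,7\}$ I would first fix an explicit $d$-regular \emph{base graph} $G_d$ on $26$, $39$, and $54$ vertices, respectively, and then design a repeatable \emph{insertion fragment} $B_d$ on exactly $d+1$ vertices equipped with two poles of internal degree $d-1$. Splicing one copy of $B_d$ into the chain should keep the assembled graph $d$-regular, add $d+1$ vertices, and multiply the number of hamiltonian cycles by the number of pole-to-pole hamiltonian paths inside $B_d$. Invoking Lemma~2 from~\cite{Za22} then expresses the total number of hamiltonian cycles of the chain with $k$ inserted copies as the base count times that path-count raised to the power $k$.

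The numerical targets tell me exactly what each fragment must achieve. At $k=0$ the base graphs must have precisely $2^{10}3^3$, $2^{12}3^4 5^5$, and $2^{24}3^{10}5^5$ hamiltonian cycles. For the insertion fragment the required per-copy multiplier is $(d-2)!$ in all three cases, because $2^{k+10}3^{k+3} = 2^{10}3^3 \cdot 6^k$, $2^{3(k+4)}3^{k+4}5^5 = 2^{12}3^4 5^5 \cdot 24^k$, and $2^{3(k+8)}3^{k+10}5^{k+5} = 2^{24}3^{10}5^5 \cdot 120^k$, with $6 = 3!$, $24 = 4!$, and $120 = 5!$. So I would aim to construct $B_d$ as a small $(d+1)$-vertex gadget whose two poles admit exactly $(d-2)!$ internally spanning paths, keeping the internal degrees ($d-1$ at each pole, $d$ elsewhere) compatible with $d$-regular gluing; a complete-graph block with the poles' adjacencies restricted is the natural starting point, to be tuned until the path-count lands on $(d-2)!$. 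Once $G_d$ and $B_d$ are in place, the assembly and a single line of arithmetic reproduce the claimed closed forms and the stated orders $26+6k$, $39+7k$, and $54+8k$.

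The main obstacle is proving the counts \emph{exactly}, not just as lower bounds, and it has two faces. First, Lemma~2 yields the clean product formula only if every hamiltonian cycle of the chain meets each fragment in a single pole-to-pole arc; I would therefore need to argue that the linking edges are the sole passages between consecutive fragments, so that no hamiltonian cycle can cross a fragment in two separate arcs or bypass it. Second, and more delicate, is certifying that $G_5, G_6, G_7$ have \emph{exactly} the stated numbers of hamiltonian cycles and that $B_d$ has \emph{exactly} $(d-2)!$ pole-to-pole hamiltonian paths: the upper bounds here amount to ruling out every unintended cycle or path, which is precisely the forcing analysis that the paper's thick/thin edge convention is meant to support and which, for the base graphs, is most reliably settled by computer verification. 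The leftover steps---checking $d$-regularity, the additive vertex bookkeeping, and the multiplicative assembly through Lemma~2---are routine once the fragment behaviour is fixed.
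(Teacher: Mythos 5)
First, note that the paper does not prove Theorem~\ref{ZamfirescuTheorem} at all: it is quoted verbatim as Theorem~3 of~\cite{Za22}, so there is no in-paper argument to match yours against. The closest analogue is the proof of Theorem~\ref{haythorpeCounterExampleTheorem}, which the authors describe as ``structurally identical to the proof of Theorem~3 from~\cite{Za22}''. Your reverse-engineering of the numerology is correct and consistent with that scheme: the orders grow by $d+1$ per step, and the cycle counts factor as the base count times $\bigl((d-2)!\bigr)^k$ with $6=3!$, $24=4!$, $120=5!$. So the skeleton you propose is the right one.

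The genuine gap is that the proposal stops exactly where the proof begins. The entire content of the theorem is the existence of explicit base graphs on $26$, $39$, and $54$ vertices with \emph{exactly} $2^{10}3^3$, $2^{12}3^4 5^5$, and $2^{24}3^{10}5^5$ hamiltonian cycles, together with a concrete repeatable $(d+1)$-vertex gadget whose insertion multiplies the count by exactly $(d-2)!$; you describe these as objects ``to be tuned'' and defer their verification to a forcing analysis or a computer, so nothing is actually established. (For $d=6,7$ the base counts are on the order of $10^9$ and $10^{15}$, so even the computational fallback requires a structural product decomposition rather than enumeration.) A secondary caveat: in the paper's analogous construction the hypothesis fed to Lemma~2 of~\cite{Za22} is a $(d+1)$-vertex subgraph with exactly \emph{two} hamiltonian $vw$-paths, from which the lemma produces the $(d-2)!$ multiplier; your assumption that the inserted fragment itself must carry exactly $(d-2)!$ pole-to-pole hamiltonian paths is a different (if plausible) mechanism, and you would additionally have to prove your single-arc traversal claim, which Lemma~2 is precisely designed to package. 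Without the explicit graphs, the proposal is a correct plan but not a proof.
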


If we set $d=5$, $d=6$, and $d=7$ in Theorem~\ref{haythorpeCounterExampleTheorem}, we get:

\begin{corollary}
\label{567Corollary}
For every non-negative integer $k$ there exists (i) a $5$-regular graph on $26 + 6k$ vertices with exactly $2^{k+10} \cdot 3^{k+3}$ hamiltonian cycles, (ii) a $6$-regular graph on $38 + 7k$ vertices
with exactly $2^{3(k+4)+1} \cdot 3^{k+4} \cdot 5^{4}$ hamiltonian cycles, and (iii) a $7$-regular graph on $52+8k$ vertices with exactly $2^{3(k+7)} \cdot 3^{k+10} \cdot 5^{k+5}$ hamiltonian cycles.
\end{corollary}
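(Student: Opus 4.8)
The plan is to obtain all three families by specialising Theorem~\ref{haythorpeCounterExampleTheorem} to $d \in \{5,6,7\}$; no new construction is required. The one point of substance is that the graph produced in the proof of Theorem~\ref{haythorpeCounterExampleTheorem} carries \emph{exactly} $2[(d-1)!]^{d-2}[(d-2)!]^k$ hamiltonian cycles, rather than merely at most this many: the base graph on $d^2+d-4$ vertices has exactly $2((d-1)!)^{d-2}$ hamiltonian cycles, and each application of Lemma~2 from~\cite{Za22} multiplies the count by a fixed factor, so the total is exact. The inequality sign in the theorem only records that this number realises the upper bound on the minimum $h_n(d)$. Granting this, it suffices to evaluate the order $n = d^2+d-4+(d+1)k$ and the count $2[(d-1)!]^{d-2}[(d-2)!]^k$ at each of the three values of $d$.

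For the orders I would substitute directly, obtaining $26+6k$, $38+7k$, and $52+8k$ for $d=5,6,7$ respectively. For the counts, the only computation is to write each factorial in its prime factorisation and collect exponents. For $d=5$ one has $2[4!]^3[3!]^k = 2(2^3\cdot 3)^3(2\cdot 3)^k = 2^{k+10}\cdot 3^{k+3}$; for $d=6$, using $5! = 2^3\cdot 3\cdot 5$ and $4! = 2^3\cdot 3$, one gets $2[5!]^4[4!]^k = 2^{3(k+4)+1}\cdot 3^{k+4}\cdot 5^4$; and for $d=7$, using $6! = 2^4\cdot 3^2\cdot 5$ and $5! = 2^3\cdot 3\cdot 5$, one gets $2[6!]^5[5!]^k = 2^{3(k+7)}\cdot 3^{k+10}\cdot 5^{k+5}$. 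Each of these matches the stated count.

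There is no genuine obstacle here, as the statement is a direct corollary; the only thing to watch is the exponent bookkeeping in the three prime factorisations. What is worth highlighting is the comparison with Theorem~\ref{ZamfirescuTheorem}: for $d=5$ the two results coincide, but for $d=6$ and $d=7$ the specialisation above produces graphs on fewer vertices ($38+7k$ against $39+7k$, and $52+8k$ against $54+8k$) carrying strictly fewer hamiltonian cycles, which is precisely the improvement announced in the title of this subsection.
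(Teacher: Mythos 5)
Your proposal is correct and follows exactly the paper's route: the paper derives the corollary by setting $d=5,6,7$ in Theorem~\ref{haythorpeCounterExampleTheorem}, and your prime-factorisation bookkeeping of $2[(d-1)!]^{d-2}[(d-2)!]^k$ and the orders $d^2+d-4+(d+1)k$ checks out in all three cases. Your explicit remark that the construction underlying the theorem yields \emph{exactly} that many hamiltonian cycles (not merely an upper bound on $h_n(d)$) is a point the paper leaves implicit, and it is the right thing to flag.
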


Corollary~\ref{567Corollary} results in the same number of hamiltonian cycles (and the same order) for $d=5$ as Theorem~\ref{ZamfirescuTheorem}. However, for $d=6$ and $d=7$ the orders are not the same ($38+7k$ for Corollary~\ref{567Corollary} versus $39+7k$ for Theorem~\ref{ZamfirescuTheorem} using $d=6$, and $52+8k$ for Corollary~\ref{567Corollary} versus $54+8k$ for Theorem~\ref{ZamfirescuTheorem} using $d=7$). We now present a variation of Theorem~\ref{haythorpeCounterExampleTheorem} such that the number of vertices is the same for $d=6$ and $d=7$, but the number of hamiltonian cycles is smaller:

\begin{theorem}
\label{variationTheorem}
For every non-negative integer $k$ there exists (i) a $6$-regular graph on $39 + 7k$ vertices
with exactly $2^{3(k+4)+1} \cdot 3^{k+4} \cdot 5^{4}$ hamiltonian cycles, and (ii) a $7$-regular graph on $54+8k$ vertices with exactly $2^{3(k+7)} \cdot 3^{k+10} \cdot 5^{k+5}$ hamiltonian cycles.
\end{theorem}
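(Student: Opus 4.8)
The plan is to reuse the modular structure behind Theorem~\ref{haythorpeCounterExampleTheorem}, only enlarging its base graph so that its order matches that of the graphs in Theorem~\ref{ZamfirescuTheorem} while leaving the number of hamiltonian cycles untouched. Recall that the graph of Fig.~\ref{constructionFigure} is assembled from $d-2$ disjoint copies of $K_{d+1}^-$, each contributing a factor $(d-1)!$ to the hamiltonian cycle count through its $(d-1)!$ hamiltonian paths between the two endpoints of its missing edge, together with the white-square/white-disk part supplying the leading factor $2$. For $d\in\{6,7\}$ this base graph has $d^2+d-4$ vertices, i.e.\ $38$ and $52$, whereas Theorem~\ref{ZamfirescuTheorem} uses $39$ and $54$; the deficits are exactly $d-5$.

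First I would design, for each $d\in\{6,7\}$, a replacement gadget $H_d$ on $2d-4$ vertices---that is, on $d-5$ more vertices than $K_{d+1}^-$---with two connector vertices $p,q$ of internal degree $d-1$, all remaining $2d-6$ vertices of internal degree $d$, and \emph{exactly} $(d-1)!$ hamiltonian $pq$-paths. Writing $H_d$ as $K_{2d-4}$ minus a suitable edge set, the prescribed degree sequence forces one to delete $5$ edges when $d=6$ and $11$ edges when $d=7$; both deletions are parity-consistent, and a small search produces deletion patterns leaving precisely $120=5!$ and $720=6!$ hamiltonian $pq$-paths, respectively. Because $p$ and $q$ have internal degree $d-1$ while every other vertex of $H_d$ already has full degree $d$, the only external edges of $H_d$ are a single edge at $p$ and a single edge at $q$, exactly as for the endpoints of the missing edge of $K_{d+1}^-$; hence I can excise one copy of $K_{d+1}^-$ from the base graph and glue $H_d$ in its place along the very same two external edges, keeping the whole graph $d$-regular and altering neither the remaining copies nor the white part.

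Since $H_d$ has the same number $(d-1)!$ of connector-to-connector hamiltonian paths as the copy it replaces, and since every hamiltonian cycle of the host must enter and leave $H_d$ only through $p$ and $q$, the total hamiltonian cycle count of the enlarged base graph is unchanged, namely $2[(d-1)!]^{d-2}$, while its order rises to $d^2+d-4+(d-5)=d^2+2d-9$, which equals $39$ for $d=6$ and $54$ for $d=7$. Applying Lemma~2 from~\cite{Za22} to attach $k$ of the multiplier gadgets then adds $(d+1)k$ vertices and multiplies the count by $[(d-2)!]^k$, giving order $39+7k$ with $2(5!)^4(4!)^k=2^{3(k+4)+1}\cdot 3^{k+4}\cdot 5^4$ hamiltonian cycles for $d=6$, and order $54+8k$ with $2(6!)^5(5!)^k=2^{3(k+7)}\cdot 3^{k+10}\cdot 5^{k+5}$ hamiltonian cycles for $d=7$, as claimed. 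The main obstacle is the first step: one must exhibit gadgets $H_6$ and $H_7$ hitting the target count $(d-1)!$ \emph{on the nose} under the rigid degree constraints. I expect to settle this by displaying each gadget explicitly and certifying its hamiltonian $pq$-path count both via the thick/thin edge convention and by an independent computer verification, in keeping with the methodology of this paper.
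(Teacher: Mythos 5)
Your high-level plan---enlarge the base graph of Theorem~\ref{haythorpeCounterExampleTheorem} by swapping one internal gadget for a slightly larger one with the same connector-to-connector hamiltonian path count, then attach the $k$ multiplier gadgets via Lemma~2 of~\cite{Za22}---is the same shape as the paper's argument, which replaces a subgraph of the Fig.~\ref{constructionFigure} construction and exhibits the resulting $39$- and $54$-vertex base graphs (Figs.~\ref{sixRegularVariation} and~\ref{sevenRegularVariation}). But the specific gadget your proof hinges on provably does not exist, already for $d=6$. You need a graph $H_6$ on $8$ vertices in which $p$ and $q$ have degree $5$, the remaining six vertices have degree $6$, and there are exactly $120$ hamiltonian $pq$-paths. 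Such a graph is $K_8$ minus five edges, where $p$ and $q$ each lie on two deleted edges and every other vertex lies on exactly one; up to isomorphism there are only two such deletion patterns, namely $\{pq,\,p1,\,q2,\,34,\,56\}$ (if $pq$ is deleted) and $\{p1,\,p2,\,q3,\,q4,\,56\}$ (if it is not). A short inclusion--exclusion over the permutations $(a_1,\dots,a_6)$ of the interior vertices---forbidding $a_1$ from being a deleted neighbour of $p$, $a_6$ from being a deleted neighbour of $q$, and each deleted interior pair from being consecutive---yields $720-472=248$ hamiltonian $pq$-paths in the first case and $720-480=240$ in the second. Neither equals $120$, so no admissible $H_6$ hits the target, and your $39$-vertex graph would have $2\cdot c\cdot(120)^3$ hamiltonian cycles with $c\in\{240,248\}$ instead of the required $2\cdot(120)^4$. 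The $d=7$ gadget is in even worse shape: a $10$-vertex graph of minimum degree $6$ is Hamiltonian-connected and far too dense to have only $720$ hamiltonian $pq$-paths between two prescribed vertices.

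The failure is structural, not a matter of searching harder: once every non-connector vertex of the gadget is forced to full degree $d$, the gadget is so close to complete that its path count cannot be pushed down to $(d-1)!$. The extra $d-5$ vertices must therefore be absorbed in a more flexible part of the construction---for instance the piece built from $v$, $w$, the white disks and the white squares, whose vertices have external edges and hence looser internal degree constraints---so that the factor $2$ and the factors $(d-1)!$ are preserved. As written, the central existence claim of your proposal is false and the argument does not go through; note also that even where you believe the gadgets exist, the proposal only asserts that ``a small search produces'' them without exhibiting anything, which for a statement of this kind is the entire content of the proof.
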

\begin{proof}
The proof is very similar to the proof of Theorem~\ref{haythorpeCounterExampleTheorem}, only a certain subgraph has been replaced. The graphs can be found in Fig.~\ref{sixRegularVariation} and Fig.~\ref{sevenRegularVariation} (in the Appendix) for $d=6$ and $d=7$, respectively.
\end{proof}

\section{Correcting and extending Haythorpe's table}
\label{sect:haythorpe_table}

Haythorpe published several values of $h_n(k)$ and upper bounds for $h_n(k)$ in Table 1 of~\cite{Ha18}, for integers $k$ and $n$ satisfying $4 \leq k \leq 7$ and $5 \leq n \leq 18$. According to~\cite{Ha18}, the values of $h_n(k)$ were ``computed by first using GENREG~\cite{Me99} to construct all $k$-regular graphs on $n$ vertices for various values of $n$ and $k$, and then using the hamiltonian cycle enumeration algorithm by Chalaturnyk~\cite{Ch08} to count the number of hamiltonian cycles in each.'' 

The upper bounds for $h_n(k)$ from~\cite{Ha18} are equal to $h_n^2(k)$, which we define as the minimum number of hamiltonian cycles amongst all hamiltonian $k$-regular graphs on $n$ vertices with vertex connectivity~2. These values were computed in~\cite{Ha18} for the cases where there were more than 100 million connected $k$-regular graphs on $n$ vertices, motivated by the observation that the minimal examples for the values of $h_n(k)$ that were calculated exhaustively always had connectivity~2 if any such graph existed for that choice of $n$ and $k$ (i.e.\ (i) $n \geq 2k+2$ and (ii)~$k$~even or $k$ odd and $n$ even). However, as we will show in this section, the reported values of $h_n^2(k)$ in~\cite{Ha18} are inaccurate, i.e.\ there exist regular hamiltonian graphs with connectivity~2 for which the number of hamiltonian cycles is smaller than the reported values.

We repeated Haythorpe's experiment, using two independent algorithms for counting the number of hamiltonian cycles. To enumerate all hamiltonian cycles, the first algorithm starts from an initially empty path that only contains the vertex with the lowest degree---ties are broken arbitrarily---and it will recursively extend the path in all possible ways by visiting an unvisited vertex. If there are no more unvisited vertices left that can be reached, the algorithm checks whether all vertices have been visited and whether there is an edge between the end vertices of the path (to obtain a hamiltonian cycle). In order to speed up this approach, the algorithm avoids double counting cycles by breaking symmetry and the algorithm prunes paths that cannot lead to a hamiltonian cycle, based on the degrees of the unvisited vertices and the end vertices of the path. This approach is based on the algorithm from~\cite{GRWZ22}.

The second algorithm uses the dynamic programming algorithm due to Held and Karp~\cite{HK62} for solving sequencing problems---the hamiltonian cycle problem can be seen as a special case of such a sequencing problem by applying Lemma~1 of~\cite{JLD20}. For both algorithms, we also implemented a version that counts hamiltonian paths instead of cycles, since we will use this later.

We first computed the values of $h_n(k)$ by using GENREG~\citep{Me99} to generate all connected $k$-regular graphs on $n$ vertices. Thereafter, we used the two independent algorithms for computing the number of hamiltonian cycles. The values of $h_n(k)$ obtained through this experiment match the values that were reported by Haythorpe. For computing the values of $h_n^2(k)$, we used a different approach starting from \textit{nearly $k$-regular graphs}, i.e.\ graphs $G$ in which every vertex has degree $k$, except for two vertices which have degree less than $k$. We will refer to these two vertices as the \textit{terminals} of $G$. The following observation, whose proof is straightforward, shows the relationship between hamiltonian $k$-regular graphs of connectivity 2 and nearly $k$-regular graphs. 

\begin{observation}
\label{nearlyRegularObservation}
Let $G_1=(V_1,E_1)$ be a hamiltonian $k$-regular graph with $p_1$ hamiltonian cycles and let $\{u, v\}$ be a $2$-cut of $G_1$. The graph $G_1-u-v$ consists of precisely two connected components $G_2=(V_2,E_2)$ and $G_3=(V_3,E_3)$ for which $|V_2| \geq k-1$ and $|V_3| \geq k-1$. Let $p_2$ and $p_3$ be the number of hamiltonian paths between the terminals of the nearly $k$-regular graphs $G_1[V_2 \cup \{u, v\}]$ and $G_1[V_3 \cup \{u, v\}]$, respectively. We have $p_1 = p_2 p_3$, since $G_1$ can be reconstructed by identifying the corresponding terminals of $G_1[V_2 \cup \{u, v\}]$ and $G_1[V_3 \cup \{u, v\}]$.
\end{observation}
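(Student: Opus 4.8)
The plan is to prove the three assertions in turn, fixing throughout a hamiltonian cycle $H$ of $G_1$ (which exists by hypothesis). First I would determine the number of components of $G_1-u-v$. Deleting the two vertices $u,v$ from the cycle $H$ leaves at most two paths, each of which, being connected, lies inside a single component of $G_1-u-v$; since these paths together span $V_1\setminus\{u,v\}$, there are at most two components, and since $\{u,v\}$ is a cut there are at least two, hence exactly two, say $G_2$ and $G_3$. In particular $u$ and $v$ are non-consecutive along $H$, so $H-u-v$ is exactly two paths $P_2$ and $P_3$, lying in $G_2$ and $G_3$ respectively; I will reuse this. For the size bound, every vertex $x\in V_2$ has all $k$ of its neighbours in $V_2\cup\{u,v\}$, as $G_1$ has no edges between $V_2$ and $V_3$, so at least $k-2$ of them lie in $V_2\setminus\{x\}$, giving $|V_2|\ge k-1$, and symmetrically $|V_3|\ge k-1$.

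The core is the identity $p_1=p_2p_3$, which I would obtain from a bijection between the hamiltonian cycles of $G_1$ and the pairs $(H_2,H_3)$, where $H_i$ is a hamiltonian path between the terminals $u$ and $v$ of $G_i':=G_1[V_i\cup\{u,v\}]$. That $u,v$ are indeed the terminals of $G_i'$ follows because each has a neighbour in the other component along $H$, so its degree drops below $k$ upon restriction. Going forward, from $H$ I would read off the cyclic structure as $P_2$, one of $\{u,v\}$, then $P_3$, then the other of $\{u,v\}$; consequently the edges of $H$ with both ends in $V_2\cup\{u,v\}$ form a path that starts at $u$, runs through all of $V_2$ via $P_2$, and ends at $v$, i.e.\ a hamiltonian $uv$-path of $G_2'$, and symmetrically for $G_3'$. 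Going backward, from a pair $(H_2,H_3)$ I would take the union of their edge sets: each vertex of $V_i$ keeps degree $2$, while $u$ (resp.\ $v$) acquires exactly one incident edge from each of $H_2$ and $H_3$, so the union is a $2$-regular spanning subgraph of $G_1$; because $H_2$ and $H_3$ meet exactly in the two endpoints $u,v$, this subgraph is a single cycle, hence a hamiltonian cycle of $G_1$.

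Finally I would verify that these two constructions are mutually inverse, which is immediate once one notes that, as $G_1$ has no edge between $V_2$ and $V_3$, every edge of a hamiltonian cycle of $G_1$ has both endpoints in $V_2\cup\{u,v\}$ or both in $V_3\cup\{u,v\}$; hence splitting a cycle and regluing the two paths returns the original cycle, and conversely. This yields $p_1=p_2p_3$. I do not expect a genuine obstacle, matching the authors' claim that the proof is straightforward; the only points deserving care are the \emph{exactly two} components claim---where invoking $H$ is cleaner than a purely connectivity-theoretic argument---and the single-cycle-versus-two-cycles distinction in the gluing, which relies on $H_2$ and $H_3$ sharing both endpoints. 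I would also remark that, since each $V_i$ is non-empty, no hamiltonian $uv$-path of $G_i'$ can use the edge $uv$ (if present), so the possible presence of $uv$ in both $G_2'$ and $G_3'$ never causes double counting.
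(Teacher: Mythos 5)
Your proposal is correct and follows exactly the route the paper intends: the paper omits the proof as ``straightforward,'' and its parenthetical justification (reconstructing $G_1$ by identifying terminals) is precisely the decomposition/gluing bijection you spell out, with the component count, the bound $|V_i|\ge k-1$, and the edge-$uv$ caveat all handled correctly.
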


Observation~\ref{nearlyRegularObservation} allows us to compute $h_n^2(k)$ by first generating all connected nearly $k$-regular graphs on at most $n-k+1$ vertices, which we did using the generator GENG~\citep{MP14} and a filter that checks whether a graph is nearly $k$-regular. Secondly, we computed the number of hamiltonian paths between the two terminals of these nearly $k$-regular graphs and combined the ones with a minimal number of hamiltonian paths (by identifying the terminals) to obtain complete lists of the $k$-regular graphs on $n$ vertices with $h_n^2(k)$ hamiltonian cycles. Nearly $k$-regular graphs are interesting from a computational point of view, because they allow to generate all hamiltonian $k$-regular graphs with connectivity~2 having $h_n^2(k)$ hamiltonian cycles much more efficiently than generating all $k$-regular graphs with $h_n^2(k)$ hamiltonian cycles and filtering them for connectivity~2 afterwards. For example: there are 8\,037\,418 connected 4-regular graphs on 16 vertices, but only 21\,247 of these graphs have connectivity~2.

We computed the values of $h_n(k)$ and $h_n^2(k)$ which are summarised in Table~\ref{correctedHaythorpeTable}. This computation took about 2~CPU years.

Haythorpe~\cite{Ha18} produced a table for $n\leq18$; our values $h_{19}^2(4)$ and $h_{19}^2(6)$ are new in comparison with~\cite{Ha18}.
However, it turns out that several values of $h_{n}^2(k)$ do not match with the numbers reported in~\cite{Ha18}. These values are underlined in Table~\ref{correctedHaythorpeTable}.

\begin{table}[h!] \centering
	\begin{threeparttable}
		\begin{tabular}{cccccccccccccccccccccc} \\
			\hline
			\noalign{\smallskip}
			 & $k=4$ & $k=5$ & $k=6$ & $k=7$\\
			\noalign{\smallskip}
			\hline
			\noalign{\smallskip}
			\multicolumn{1}{c}{$n=5$} & 12 (1) & - & - & -\\
			\multicolumn{1}{c}{$n=6$} & 16  (1) & 60 (1) & - & -\\
			\multicolumn{1}{c}{$n=7$} & 23 (1) & - & 360 (1) & -\\
			\multicolumn{1}{c}{$n=8$} & 29 (1) & 177 (1) & 744 (1) & 2520 (1)\\
			\multicolumn{1}{c}{$n=9$} & 36 (1) & - & 1553 (1) & -\\
			\multicolumn{1}{c}{$n=10$} & 36 (1) & 480 (1) & 3214 (1) & 14963 (1)\\
			\multicolumn{1}{c}{$n=11$} & 48 (2) & - & 6564 (1) & -\\
			\multicolumn{1}{c}{$n=12$} & 60 (2) & 576 (1) & 12000 (1) & 87808 (1)\\
			\multicolumn{1}{c}{$n=13$} & 72 (3) & - & 22680 (1) & -\\
			\multicolumn{1}{c}{$n=14$} & 72 (1) & 1296 (1) & 14400 (1) & 430920 (1)\\
			\multicolumn{1}{c}{$n=15$} & 72 (2) & - & $\mathbf{\underline{28800}}$ (1) & -\\
			\multicolumn{1}{c}{$n=16$} & 72 (1) & $\mathbf{\underline{3168}} (1) $ & $\mathbf{57600}$ (3) & $\mathbf{518400}$ (1)\\
			\multicolumn{1}{c}{$n=17$} & $\mathbf{96}$ (2) & - & $\mathbf{\underline{115200}}$ (2) & -\\
			\multicolumn{1}{c}{$n=18$} & $\mathbf{108}$ (1) & $\mathbf{3456}$ (1) & $\mathbf{\underline{230400}}$ (3) & $\mathbf{\underline{2663424}}$ (1)\\
			\multicolumn{1}{c}{$n=19$} & $\textbf{\textit{144}}$ (21) & - & $\textbf{\textit{443520}}$ (1) & -\\
			\noalign{\smallskip}
			\hline
		\end{tabular}
	\end{threeparttable}
	\caption{Counts of $h_n(k)$ and $h_n^2(k)$ for various values of $n$ and $k$. The non-bold values show $h_n(k)$, the bold values indicate $h_n^2(k)$ (i.e.~upper bounds for $h_n(k)$), and the underlined values represent all values that did not match with the values reported by Haythorpe in~\cite{Ha18}. The values which are new in comparison with~\cite{Ha18} are in italic. A dash signifies an impossible combination of $k$ and $n$. The number of graphs that have $h_n(k)$ or $h_n^2(k)$ hamiltonian cycles are indicated between parentheses; for the case of $h_n^2(k)$, only graphs with connectivity 2 are considered.}	
		\label{correctedHaythorpeTable}
\end{table}

Since several values do not match with the numbers previously reported in~\cite{Ha18}, it was important to take extra measures to ensure the correctness of the obtained results. For this reason, we implemented various ``sanity checks'' to check properties that should hold either by construction or to be in agreement with previously reported research. We verified for all graphs with $h_n(k)$ or $h_n^2(k)$ hamiltonian cycles that (i) the number of hamiltonian cycles is indeed equal to $h_n(k)$ or $h_n^2(k)$ according to the two independent programs; (ii) the reported values match the values reported in~\cite{Ha18} and~\cite{GMZ20} for the case of $h_n(k)$; (iii) $h_n(4) \leq h_n^2(4)$ (for all $n$ such that $10 \leq n \leq 16$), $h_n(5) \leq h_n^2(5)$ (for all $n$ such that $12 \leq n \leq 14$), and $h_{14}(6) \leq h_{14}^2(6)$; (iv) the graphs indeed have connectivity~2; and (v) the graphs are indeed $k$-regular graphs on $n$ vertices. Furthermore, we independently verified the values $h_n(k)$ by using GENREG followed by the two algorithms for counting the number of hamiltonian cycles and the values $h_{17}^2(4)$, $h_{16}^2(5)$ and $h_{15}^2(6)$ using GENREG and a program that filters out the graphs with connectivity~2 (and counting the number of hamiltonian cycles with the two algorithms).

Finally, we verified whether the number of hamiltonian cycles (or paths) was identical for both programs for all simple connected graphs up to order 11. As desired, all sanity checks produced the expected results. 

The complete lists of all $k$-regular graphs on $n$ vertices containing $h_n(k)$ or $h_n^2(k)$ hamiltonian cycles---whose counts are mentioned between parentheses in Table~\ref{correctedHaythorpeTable}---can be obtained from the database of interesting graphs from the \textit{House of Graphs}~\cite{HoG} by searching for the keywords ``regular * minimum number of hamiltonian cycles'' to allow other researchers to inspect or independently verify these results.

\section{Thomassen's independent dominating set method}
\label{sect:rigd}

It was shown by Thomassen~\cite{Th97} that any graph $G$ with a hamiltonian cycle ${\frak h}$ has a second hamiltonian cycle if $G$ has a vertex set $S$ which is an independent set of $(V(G), E({\frak h}))$ and is a dominating set of $(V(G), E(G) \setminus E({\frak h}))$; such a set $S$ will be called an \emph{${\frak h}$-independent dominating set} of $G$.  In fact, a slightly stronger statement is proven, which we state here:

\begin{theorem}[\cite{Th97}]\label{redgreen}
 Let $G$ be a hamiltonian graph, and let ${\frak h}$ be a hamiltonian cycle of $G$.  Colour all edges of ${\frak h}$ red, and all edges of $G-E({\frak h})$ green.  If $S$ is an ${\frak h}$-independent dominating set of $G$, then $G$ has a hamiltonian cycle distinct from ${\frak h}$.  Furthermore, we can choose a second hamiltonian cycle ${\frak h}'$ such that ${\frak h}'-S = {\frak h}-S$ and there is a vertex in $S$ such that exactly one red edge incident to it lies in ${\frak h}'$.
\end{theorem}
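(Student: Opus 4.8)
The plan is to read the condition ${\frak h}'-S={\frak h}-S$ structurally and then run a rotation (``lollipop'') argument in which domination supplies the moves. Since $S$ is independent in $(V(G),E({\frak h}))$, deleting $S$ from ${\frak h}$ splits it into $|S|$ pairwise disjoint subpaths, the \emph{arcs}, whose $2|S|$ endpoints are exactly the ${\frak h}$-neighbours of the vertices of $S$. The requirement ${\frak h}'-S={\frak h}-S$ says precisely that ${\frak h}'$ must contain every arc and may differ from ${\frak h}$ only in the \emph{connection edges} joining $S$ to the arc-endpoints; in ${\frak h}'$ each $s\in S$ must again meet two connection edges and each arc-endpoint exactly one. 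First I would record what domination buys us: every arc-endpoint $z$ already has a red edge to its ${\frak h}$-neighbour in $S$, and since $S$ dominates $(V(G),E(G)\setminus E({\frak h}))$ it also has a green neighbour in $S$, necessarily distinct; hence every arc-endpoint has at least two neighbours in $S$.

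Next I would set up the lollipop. Fix $s_0\in S$ with ${\frak h}$-neighbours $u,v$, retain the edge $s_0u$ throughout, and -- to make degrees uniform -- pass to the spanning subgraph $G'$ obtained by keeping, at each arc-endpoint, exactly two connection edges: its red edge and one green edge provided by domination. Consider the hamiltonian paths of $G'$ that keep all arcs intact and have $s_0$ (joined only to $u$) as an endpoint; the other endpoint is then forced to be an arc-endpoint $w$. A rotation at $w$ adds a connection edge $ws$ with $s\in S\setminus\{s_0\}$ and deletes the connection edge of $s$ that points toward $w$, yielding another such path; these moves are symmetric. If $w$ is instead joined to $s_0$ in $G'$, adding $ws_0$ closes the path into a hamiltonian cycle that keeps all arcs intact. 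Form the graph on these paths together with one extra vertex for each such closing cycle, joining every cycle to the unique path it closes. Because each arc-endpoint carries exactly two connection edges in $G'$, every path-vertex has degree exactly $2$ and every cycle-vertex has degree $1$; thus this graph is a disjoint union of paths and cycles whose only degree-one vertices are hamiltonian cycles.

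Now ${\frak h}$ itself is one of these cycle-leaves (it closes ${\frak h}-s_0v$), so following its path-component from the leaf ${\frak h}$ I reach the opposite endpoint, which must again be a degree-one vertex, i.e.\ a hamiltonian cycle ${\frak h}'\neq{\frak h}$ with ${\frak h}'-S={\frak h}-S$; this gives the second cycle. For the furthermore I would use the following clean fact: if ${\frak h}\triangle{\frak h}'$ is a \emph{single} alternating red/green cycle $Z$, then every $s\in S$ lying on $Z$ meets ${\frak h}'$ in exactly one shared (hence red) connection edge and one new (green) edge, because along $Z$ each arc-endpoint contributes its red edge on the ${\frak h}$-side and a green edge on the ${\frak h}'$-side, forcing each incident $S$-vertex to keep precisely one of its old connections. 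Since $s_0$ retains $s_0u$, it is exactly such a vertex once it is changed, and I would obtain the single-cycle symmetric difference either from the connected chain of rotations produced by the walk or, failing that, by passing to a second cycle minimising $|{\frak h}\triangle{\frak h}'|$. I expect the main obstacle to be precisely this bookkeeping at the two ends: checking that no rotation ever breaks an arc or splits the configuration into more than one path (so the rotation graph really has the clean degree structure above), and guaranteeing that the cycle we extract differs from ${\frak h}$ through a single alternating cycle meeting $s_0$, so that the distinguished vertex with exactly one red edge can be named explicitly.
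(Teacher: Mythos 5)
Your construction for the first assertion is essentially sound, and it is in essence the same mechanism the paper relies on: the paper does not reprove Theorem~\ref{redgreen} but cites~\cite{Th97} and points to Lemma~\ref{partition}, proved by ``an extension of Thomason's lollipop method'', applied to exactly the pruned spanning subgraph you call $G'$; your rotation graph is that extension. Two repairs are needed, both minor. First, an arc may consist of a single vertex (one whose two $\h$-neighbours both lie in $S$); such a vertex has \emph{two} red connection edges, so your rule ``keep its red edge and one green edge'' discards a red edge, $\h \not\subseteq G'$, and the rotation graph loses the very leaf you start from. The fix is to keep all of $E(\h)$ plus one green connection edge per arc-endpoint; your degree count survives, since such an endpoint then has three connection edges of which one lies in the path, again leaving two moves. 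Second, the claim that the free endpoint of every path is forced to be an arc-endpoint deserves a justification: contract each arc to a vertex; connection edges join $S$ to arcs, so the contracted path is a hamiltonian path in a balanced bipartite graph on $2|S|$ vertices, whose ends therefore lie in different classes. With these repairs, the existence of $\h'\neq\h$ with $\h'-S=\h-S$ is correctly proved.

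The genuine gap is the ``furthermore'' clause, which your final sentence acknowledges but does not close. (i) Your primary plan --- that $s_0$ is the witness because it ``retains $s_0u$ and is changed'' --- fails: the lollipop can terminate by re-inserting the \emph{red} edge $s_0v$. Rotations can delete green edges added earlier; when the last rotation deletes the green edge at $v$, the free endpoint becomes $v$ and the closing move adds $s_0v$, so $\h'$ contains both red edges at $s_0$ and $s_0$ witnesses nothing (explicit examples exist already with $|S|=3$ and three two-vertex arcs). (ii) The fallback is also unsupported. Minimizing $|\h \triangle \h'|$ gives no control: exchanging $\h$ along one cycle of a decomposition of $\h \triangle \h'$ produces a $2$-factor, not in general a hamiltonian cycle, so one cannot argue the minimizer's difference is a single alternating cycle. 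Worse, the desired property can genuinely fail for a second cycle: if $a,b\in S$ and the green edges of the four arc-endpoints red-attached to $a$ and $b$ cross over (those attached to $a$ point to $b$ and vice versa), then swapping these four connection edges yields a hamiltonian cycle $\h'$ with $\h'-S=\h-S$ in which $a$ and $b$ have no red edge and every other vertex of $S$ has two --- no vertex of $S$ has exactly one. So the ``furthermore'' is a statement about \emph{choosing} $\h'$ among possibly many second cycles, and your argument contains no selection mechanism. This is precisely what part~(2) of Lemma~\ref{partition} supplies (every edge of $\h$ incident with $S$ lies in an \emph{even} number of hamiltonian cycles of the pruned graph). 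Ironically, your rotation graph proves that parity for free --- for fixed $s_0u$, the hamiltonian cycles through it are exactly the degree-one vertices, hence even in number --- but you never state or use it, and without it (or some substitute) the second half of the theorem remains unproven.
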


The proof of Theorem \ref{redgreen} relies on the following lemma, which can be proven by an extension of Thomason's lollipop method (see \cite{Th78}):

\begin{lemma}[\cite{Th97}]\label{partition}
 Let $H$ be a graph with a hamiltonian cycle $\h$.  Suppose there is a set of vertices $S$ such
that $H - S$ has $|S|$ components, each of which is a path with end vertices of odd degree in $H$. Then
\begin{itemize}
\item[\emph{(1)}] $\h' - S = \h - S$ for each hamiltonian cycle $\h'$ distinct from $\h$, and
\item[\emph{(2)}] each edge of $\h$ incident to a vertex in $S$ is in an even number of hamiltonian cycles.
\end{itemize}
\end{lemma}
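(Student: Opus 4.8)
The plan is to first translate the hypothesis into a rigid structural description of $H$ and then dispatch part~(1) by a short edge-count and part~(2) by a rotation (``lollipop'') argument tailored so that the degree hypothesis is invoked exactly where it is needed. Write $s=|S|$ and let the $s$ components of $H-S$ be $A_1,\dots,A_s$, each a path by assumption. Removing $S$ from $\h$ leaves at most $s$ arcs, each lying inside some $A_i$; since these arcs partition $V(H)\setminus S$ into at most $s$ nonempty pieces spread over the $s$ components, there are exactly $s$ arcs and each $A_i$ \emph{is} a single arc of $\h$. Consequently no two vertices of $S$ are consecutive on $\h$, the arcs and the $S$-vertices alternate around $\h$, and---crucially---each $A_i$ carries no chord of $H$, so every edge of $H$ with both ends outside $S$ is an edge of some arc (equivalently, of $\h-S$). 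Thus the only ``extra'' edges of $H$ meet $S$, and every $\h$-edge incident to $S$ joins an $S$-vertex to an arc-endpoint.

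For part~(1) I would count edges. Let $\h'$ be any Hamiltonian cycle. Each edge of $\h'$ with both ends outside $S$ is an arc edge, so $\h'-S\subseteq \h-S$; as the arcs carry $\sum_i(|A_i|-1)=n-2s$ edges, $\h'-S$ has at most $n-2s$ edges. On the other hand $\h'$ has $n$ edges and at most $2s$ meet $S$, so $\h'-S$ has at least $n-2s$ edges. Hence equality holds throughout, giving $\h'-S=\h-S$; as a byproduct both $\h'$-edges at every $S$-vertex leave $S$, so $S$-vertices are pairwise non-adjacent on every Hamiltonian cycle, and every Hamiltonian cycle contains all arc edges.

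For part~(2), fix an $\h$-edge $uw$ with $u\in S$ and $w$ an arc-endpoint, and count Hamiltonian cycles through $uw$ modulo $2$. The key is to run the rotations not over all Hamiltonian paths but over the restricted family $\mathcal{W}$ of Hamiltonian paths that (i) start with the edge $uw$, (ii) contain every arc as a contiguous subpath, and (iii) have no two $S$-vertices adjacent. By part~(1) and the no-chord structure, opening any Hamiltonian cycle through $uw$ at the other edge at $u$ yields a member of $\mathcal{W}$, and this is a bijection onto those $P\in\mathcal{W}$ whose free end is adjacent to $u$. Such a $P$ alternates $S$-vertices and arcs, starts at $u\in S$, and uses all $s$ arcs and all $s$ vertices of $S$, so its free end $z$ is forced to be an arc-endpoint, a vertex of \emph{odd} degree in $H$. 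A rotation at $z$ can only pivot at an $S$-vertex (its sole non-predecessor neighbours) and merely swaps an $S$--arc edge for an arc-endpoint--$S$ edge; hence $\mathcal{W}$ is closed under rotations, and in the rotation graph on $\mathcal{W}$ the path $P$ has degree $\deg_H(z)-1-[\,z\sim u\,]$. Since $\deg_H(z)$ is odd, $P$ has odd degree precisely when $z\sim u$, i.e.\ precisely when $P$ closes to a Hamiltonian cycle through $uw$; the handshake lemma then forces an even number of such configurations.

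The main obstacle, and the place where the hypothesis does its work, is part~(2): a naive lollipop over \emph{all} Hamiltonian paths would present free ends of unknown parity, and Thomason's Smith-type conclusion would not follow. The crux is therefore to isolate the family $\mathcal{W}$ on which rotations stay closed and the free end is pinned to an arc-endpoint, so that the odd-degree hypothesis applies exactly where the parity count needs it; checking this closure and the degree formula---with due care for single-vertex arcs, whose lone vertex serves as an odd-degree endpoint, and for the cycle-closing configuration---is the technical heart of the argument.
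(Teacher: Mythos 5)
Your argument is correct, and it is precisely the route the paper points to: the paper states this lemma as a citation of Thomassen [Th97] without proof, remarking only that it ``can be proven by an extension of Thomason's lollipop method,'' which is exactly what you carry out (part (1) by the edge count forcing every hamiltonian cycle to use all arc edges, part (2) by running the lollipop on the rotation-closed family $\mathcal{W}$ whose free ends are pinned to odd-degree arc-endpoints). The structural reduction at the start (exactly $s$ arcs, one per component, hence alternation and chordlessness) and the degree formula $\deg_H(z)-1-[z\sim u]$ are both justified correctly, including the single-vertex-arc case.
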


Note that a graph $H$ containing an $\h$-independent dominating set $S$ satisfies the conditions of Lemma \ref{partition}.

In a subsequent article, Thomassen~\cite{Th98} ingeniously combines Theorem \ref{redgreen} with the Lov\'asz Local Lemma to prove that every $k$-regular hamiltonian graph with a hamiltonian cycle ${\frak h}$ always has an ${\frak h}$-independent dominating set if $k \ge 300$.
Thomassen's result was strengthened to $k \ge 24$ by Haxell, Seamone, and Verstraete~\cite{HSV07} by the same approach with an improved random process for selecting the vertices for a potential ${\frak h}$-independent dominating set.

In the language of our work, one can in fact show that regular graphs with sufficiently large degree not only contain two hamiltonian cycles, but in fact that $h(k) \geq 4$ if $k \ge 25$.  It is not hard to see that $h(k) \geq 3$ by an application of Lemma \ref{partition}.  Suppose now that there is some $k$-regular graph $H$ containing precisely three hamiltonian cycles, $\h_1, \h_2, \h_3$.  Note that $\h_1 \cup \h_2 \cup \h_3$ form a cubic spanning subgraph of $H$, that $\h_1 \cap \h_2$ form a perfect matching of $H$, and thus $H  - E(\h_1 \cap \h_2)$ is a hamiltonian $(k-1)$-regular graph; applying the aforementioned result of Haxell, Seamone, and Verstraete gives a second hamiltonian cycle distinct from $\h_1, \h_2, \h_3$, a contradiction.

In the remainder of this section, we first consider an extension of Thomassen's $\h$-independent dominating set approach by showing that not only do hamiltonian $k$-regular graphs contain multiple hamiltonian cycles if $k$ is sufficiently large, but also that one can find a second hamiltonian cycle in such graphs even if a linear number of edges of one hamiltonian cycle (in terms of $k$) are prescribed to be contained in the other. Furthermore, we explore the limits of the $\h$-independent dominating set method by showing that when $k$ is small (at most $6$), then one cannot guarantee that a hamiltonian graph contains an $\h$-independent dominating set; this answers questions of Thomassen~\cite{Th98} and Haxell et al.~\cite{HSV07} in the negative.  We also provide computational results related to the $k=7,8$ cases.

\subsection{An extension of Thomassen's ${\frak h}$-independent dominating set theorem}

The section is devoted to Theorem \ref{fixed edges}, a strengthening of Thomassen's Theorem~\ref{redgreen}. Our main tool is the Lov\'asz Local Lemma. 

\begin{theorem}[Lov\'asz Local Lemma~\cite{EL75}]\label{LLL}
	Let $\{A_i : i \in I\}$ be a finite family of events in a probability space, and for each $A_i$ let $J_i \subset I$ be the set of values of $j$ such that $A_j$ depends on $A_i$.  Suppose there exist real numbers $0 < x_i < 1$ for each $i \in I$ such that $$\mathbb{P}(A_i) < x_i \prod_{j \in J_i}(1-x_j).$$  Then $$\mathbb{P}\left(\bigcap_{i \in I} \overline{A_i}\right) \geq \prod_{i \in I}(1-x_i) > 0.$$
\end{theorem}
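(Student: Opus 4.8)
The plan is to deduce the theorem from a single conditional-probability estimate, proved by induction on the size of the conditioning set, and then assemble the stated bound via the chain rule. Concretely, I would first establish the following claim: for every $i \in I$ and every subset $S \subseteq I \setminus \{i\}$ one has $\mathbb{P}\left(A_i \mid \bigcap_{j \in S} \overline{A_j}\right) \le x_i$, with the convention that this reads $\mathbb{P}(A_i)$ when $S = \emptyset$. Granting the claim, the theorem follows immediately: ordering $I = \{1, \dots, n\}$ arbitrarily and expanding by the chain rule gives
$$\mathbb{P}\left(\bigcap_{i \in I} \overline{A_i}\right) = \prod_{i=1}^{n} \left(1 - \mathbb{P}\left(A_i \,\middle|\, \bigcap_{m < i}\overline{A_m}\right)\right) \ge \prod_{i=1}^{n}(1 - x_i) > 0,$$
where the inequality applies the claim with $S = \{1, \dots, i-1\}$ and the final positivity uses $0 < x_i < 1$.

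I would prove the claim by induction on $|S|$. The base case $S = \emptyset$ is immediate, since the hypothesis gives $\mathbb{P}(A_i) < x_i \prod_{j \in J_i}(1 - x_j) \le x_i$ because each factor $1 - x_j$ lies in $(0,1)$. For the inductive step I would split the conditioning set along the dependency structure: set $S_1 := S \cap J_i$ (the neighbours of $i$ inside $S$) and $S_2 := S \setminus S_1$ (the non-neighbours), and write the conditional probability as the ratio
$$\mathbb{P}\left(A_i \,\middle|\, \bigcap_{j \in S}\overline{A_j}\right) = \frac{\mathbb{P}\left(A_i \cap \bigcap_{j \in S_1}\overline{A_j} \,\middle|\, \bigcap_{k \in S_2}\overline{A_k}\right)}{\mathbb{P}\left(\bigcap_{j \in S_1}\overline{A_j} \,\middle|\, \bigcap_{k \in S_2}\overline{A_k}\right)}.$$
The numerator I would bound above by discarding the extra intersection and then invoking that $A_i$ is mutually independent of $\{A_k : k \in S_2\}$ (since $S_2$ avoids $J_i$ and, as $S$ avoids $i$, also avoids $i$), yielding at most $\mathbb{P}(A_i) \le x_i \prod_{j \in J_i}(1 - x_j)$. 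The denominator I would bound below by enumerating $S_1 = \{j_1,\dots,j_r\}$ and writing it as a product of chain-rule factors of the form $1 - \mathbb{P}\left(A_{j_l} \mid \bigcap_{m<l}\overline{A_{j_m}} \cap \bigcap_{k\in S_2}\overline{A_k}\right)$; each such conditioning set has strictly fewer than $|S|$ elements, so the inductive hypothesis applies and gives a lower bound of $\prod_{j \in S_1}(1 - x_j) \ge \prod_{j \in J_i}(1 - x_j)$, the last step because $S_1 \subseteq J_i$ and all factors are at most $1$. Forming the ratio, the products $\prod_{j \in J_i}(1 - x_j)$ cancel and leave exactly $x_i$, closing the induction.

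The hard part is getting the inductive bookkeeping exactly right, and it hinges on the split of $S$ relative to the dependency neighbourhood $J_i$. One must arrange that independence is invoked in the precise ``mutual independence from the complement of $J_i$'' form in the numerator (pairwise independence from each $A_k$, $k \in S_2$, would not suffice), that every conditioning set produced by the chain-rule expansion of the denominator is genuinely of cardinality smaller than $|S|$ so the hypothesis is applicable, and that the residual product $\prod_{j \in S_1}(1 - x_j)$ can legitimately be enlarged to $\prod_{j \in J_i}(1 - x_j)$. Once these three points are verified, the cancellation in the ratio is automatic and the remainder of the argument is routine.
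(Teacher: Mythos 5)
Your proposal is correct, but there is nothing in the paper to compare it against: the paper states the Lov\'asz Local Lemma as a quoted result of Erd\H{o}s and Lov\'asz~\cite{EL75} and never proves it, using it only as a black box in the proof of Theorem~\ref{fixed edges}. What you have written is the classical proof of the asymmetric Local Lemma: the induction on $|S|$ establishing $\mathbb{P}\bigl(A_i \mid \bigcap_{j \in S}\overline{A_j}\bigr) \le x_i$, the split of the conditioning set into $S_1 = S \cap J_i$ and $S_2 = S \setminus S_1$, the use of mutual (not merely pairwise) independence of $A_i$ from $\{A_k : k \in S_2\}$ in the numerator, the chain-rule expansion of the denominator with strictly smaller conditioning sets, and the enlargement $\prod_{j \in S_1}(1-x_j) \ge \prod_{j \in J_i}(1-x_j)$ -- all three of the delicate points you flag are exactly the right ones, and your handling of them is sound. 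The only detail worth making explicit in a fully rigorous write-up is that the conditional probabilities you manipulate are well defined: one should carry along in the induction the statement that $\mathbb{P}\bigl(\bigcap_{j \in S}\overline{A_j}\bigr) > 0$, which follows from your denominator bound since every chain-rule factor is at least $1 - x_j > 0$; this is implicit in your argument but deserves a sentence. Note also that the paper's phrasing of the hypothesis (``$J_i$ is the set of $j$ such that $A_j$ depends on $A_i$'') must be read, as you do, in the standard form that $A_i$ is mutually independent of the family $\{A_j : j \notin J_i \cup \{i\}\}$; without that reading the inductive step fails.
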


\begin{theorem}\label{fixed edges}
For any $\varepsilon > 0$, there exists a constant value $d_0(\varepsilon)$ such that the following holds: Let $H$ be a $d$-regular graph, $d \geq d_0(\varepsilon)$, containing a hamiltonian cycle ${\frak h}$, and let $E_f$ be a set of edges of ${\frak h}$ which cover at most $(1 - \varepsilon)d$ vertices of $H$.  Then $H$ contains a second hamiltonian cycle containing $E_f$.
\end{theorem}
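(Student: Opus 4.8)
The plan is to prove Theorem~\ref{fixed edges} by combining Thomassen's Theorem~\ref{redgreen} with a random selection of candidate vertices governed by the Lov\'asz Local Lemma, exactly in the spirit of \cite{Th98} and \cite{HSV07}, but adapted to respect the prescribed edge set $E_f$. The key observation is that Theorem~\ref{redgreen} produces a second hamiltonian cycle ${\frak h}'$ satisfying ${\frak h}' - S = {\frak h} - S$; hence if the ${\frak h}$-independent dominating set $S$ avoids all vertices covered by $E_f$, then every edge of $E_f$ lies in ${\frak h} - S$ and is therefore preserved in ${\frak h}'$. So the goal is not merely to find an ${\frak h}$-independent dominating set, but to find one that is disjoint from the vertex set $V(E_f)$ covered by $E_f$. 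Since $E_f$ covers at most $(1-\varepsilon)d$ vertices, we have a constant fraction (roughly $\varepsilon$) of the degree's worth of ``slack'' at each vertex to work with, which is precisely what will let the probabilistic argument go through.

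First I would set up the random process. Orient or otherwise fix a reference so that ``${\frak h}$-independent'' means no two chosen vertices are adjacent along ${\frak h}$, and ``dominating in the green graph'' means every vertex is chosen or has a chosen green neighbour. Following \cite{HSV07}, I would select each vertex of $V(H) \setminus V(E_f)$ independently into a candidate set with a carefully tuned probability $p \approx c \log d / d$, then prune to enforce ${\frak h}$-independence. For each vertex $v$ I would define a ``bad event'' $A_v$ that $v$ is neither chosen nor green-dominated by the surviving candidate set. The crucial point is a lower bound on the number of green neighbours of $v$ that lie \emph{outside} $V(E_f)$: since $v$ has $d-2$ green neighbours and at most $(1-\varepsilon)d$ vertices total are forbidden, at least $\varepsilon d - O(1)$ green neighbours of $v$ are available as potential dominators. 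This linear-in-$d$ supply is what makes $\mathbb{P}(A_v)$ decay like a negative power of $d$.

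Next I would verify the Lov\'asz Local Lemma hypothesis of Theorem~\ref{LLL}. The dependency structure is local: $A_v$ depends only on the choices made in a bounded-radius neighbourhood of $v$ in $H$, so each $A_v$ is mutually independent of all but at most $D = O(d^2)$ other events. Choosing uniform weights $x_v = x$ for all $v$, the condition $\mathbb{P}(A_v) < x(1-x)^{D}$ reduces to showing $\mathbb{P}(A_v) \le 1/(e(D+1))$ for a suitable $x \approx 1/(D+1)$, i.e.\ that the bad-event probability beats an inverse-polynomial-in-$d$ threshold. With $\mathbb{P}(A_v)$ bounded above by roughly $(1-p)^{\varepsilon d} \approx d^{-c\varepsilon}$ and $D = O(d^2)$, choosing the constant $c$ large enough (depending on $\varepsilon$) makes the inequality hold for all $d \ge d_0(\varepsilon)$. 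The Local Lemma then yields, with positive probability, a candidate set avoiding every bad event; after the independence pruning this set is an ${\frak h}$-independent dominating set $S$ disjoint from $V(E_f)$, and Theorem~\ref{redgreen} delivers the desired second hamiltonian cycle containing $E_f$.

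The main obstacle I anticipate is the domination analysis under the constraint that dominators must avoid $V(E_f)$: one must argue that enough green neighbours of each $v$ remain usable, and simultaneously that the ${\frak h}$-independence pruning does not destroy too many of them. Here the bound ``$E_f$ covers at most $(1-\varepsilon)d$ vertices'' is used in an essential, quantitative way---it guarantees the $\varepsilon d$ margin---so the whole argument breaks down as $\varepsilon \to 0$, which explains why $d_0$ must depend on $\varepsilon$. A secondary technical point is handling vertices $v$ that themselves lie in $V(E_f)$: such $v$ cannot be placed in $S$, so they must be green-dominated from outside $V(E_f)$, and one should check that their green degree into the allowed region is still linear in $d$; this again follows from the same counting bound. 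Apart from these counting estimates, the remaining steps are routine applications of the Local Lemma and of Theorem~\ref{redgreen}.
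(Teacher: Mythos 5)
Your proposal is correct and takes essentially the same route as the paper: both reduce the problem, via the clause ${\frak h}'-S={\frak h}-S$ of Theorem~\ref{redgreen}, to finding an ${\frak h}$-independent dominating set that avoids the vertices covered by $E_f$, and both obtain such a set from the Lov\'asz Local Lemma by exploiting the fact that every vertex retains at least $\varepsilon d - O(1)$ usable green neighbours. The only difference is cosmetic: you follow the Haxell--Seamone--Verstraete select-then-prune process with $p\approx c\log d/d$, whereas the paper follows Thomassen's original setup with $p=1/(4\sqrt{d})$ and separate bad events for independence violations and for domination failures.
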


\begin{proof}
We let $d_0 = d_0(\varepsilon)$ satisfy
\begin{eqnarray}
\frac{d_0}{4\sqrt{d_0}\log(8d_0^2)+1} > \frac{1}{\varepsilon}. \label{eqn:d}
\end{eqnarray}
Fix $\varepsilon>0$ and let $H$ be a $d$-regular graph, $d \geq d_0(\varepsilon)$, containing a hamiltonian cycle $\h$.  Colour every edge of $\h$ red and all other edges of $H$ green.  We prove the statement by applying the Local Lemma to a spanning subgraph $G$ of $H$ which contains $\h$ to show that $G$ (and hence $H$) contains an $\h$-independent dominating set.  The result then follows from Theorem \ref{redgreen}.

Fix a set of edges $E_f$ in $\h$, using at most $(1 - \varepsilon)d$ vertices of $H$.  Let $V_f$ be the set of ends of $E_f$ together with the vertices joined to an edge of $E_f$ by a red edge.  We seek an $\h$-independent dominating set $S \subseteq V(H) \setminus V_f$ which satisfies the conditions of Lemma~1; Theorem~4 then guarantees the existence of the desired cycle containing all edges of $E_f$.  Let $G$ be the subgraph of $H$ obtained by deleting all green edges incident to an edge in $E_f$.
Note that every vertex that is not an end of an edge in $E_f$ has degree at least $d-(1 - \varepsilon)d=\varepsilon d$.   By our choice of $d_0$, we have that each such vertex has degree strictly greater than $2$, since $$(1-\varepsilon)d < \left(1 - \frac{4\sqrt{d_0}\log(8d_0^2)+1}{d_0}\right)d < \left(1 - \frac{2}{d_0}\right)d = d-\frac{2d}{d_0} \leq d-2.$$
Let $p$ be some fixed real number such that $0 < p < 1$ and let $S \subseteq V(G) \setminus V_f$ be a set of vertices chosen independently at random with each vertex in $V(G) \setminus V_f$ being selected with probability $p$.
For each red edge of $G$ not incident to an edge of $E_f$, let $A_e$ be the event that each end of $e$ is in $S$.  For each vertex $v \in V(G)-V_f$, let $A_v$ be the event that $v$ and the vertices of $G$ joined to $v$ by green edges are all in $V(H)\setminus S$.  We then have the following probabilities: $$\mathbb{P}(A_e) = p^2,$$ $$(1-p)^{(d-2)+1} = (1-p)^{d-1} \leq \mathbb{P}(A_v) \leq (1-p)^{(d\varepsilon - 2)+1} = (1-p)^{d\varepsilon - 1}.$$
Let $I$ be the set of all vertices not in $V_f$ and all red edges other than those which have both ends in $V_f$.  Then $J_v = \{w \in I \cap V(G) : A_w \sim A_v\} \cup \{f \in I \cap E(G) : A_f \sim A_v\}$ and $J_e = \{w \in I \cap V(G) : A_w \sim A_e\} \cup \{f \in I \cap E(G) : A_f \sim A_e\}$, where we denote the dependency of events $A_i$ and $A_j$ by $A_i \sim A_j$.

Associate to every event of type $A_e$ the same variable $x \in (0,1)$ and to every event of type $A_v$ a variable $y \in (0,1)$.  Consider the event $A_e$.  For each end of $e$, there is precisely one other red edge incident to that end in $H$. Thus, there are at most two events of the form $A_{e_i}$ such that $A_{e_i} \sim A_e$.
For each end of $e$, there are at most $d-2$ vertices of $V(G) \setminus V_f$ joined to it by a green edge in $H$ and so there are at most $2(d-2)$ events $A_{v_i}$ such that $A_{v_i} \sim A_e$.  Also, if $e=uv$, then $A_u \sim A_e$ and $A_v \sim A_e$ and so there are at most $2d-2$ events $A_{v_i}$ such that $A_{v_i} \sim A_e$.  Thus, in order to apply the Local Lemma, the values of $x,y$ must satisfy
\begin{equation}\label{eqn:edge}
    \mathbb{P}(A_e) = p^2 < x(1-x)^2(1-y)^{2d-2}.
\end{equation}
Consider the event $A_v$.  There are exactly two red edges incident to each of the (at most) $d-2$ vertices joined to $v$ by green edges, as well as to $v$ itself, so there are at most $2(d-1)$ red edges incident to these $d-1$ vertices in total.  Thus, there are at most $2d-2$ events $A_{e_i}$ such that $A_{e_i} \sim A_v$.
Let $w$ be a vertex of $H$ joined to $v$ by a green edge.  Then $w$ is joined by a green edge to at most $d-3$ other vertices that lie in $V(G) \setminus V_f$.  These give at most $(d-3)+1$ (one for each of $w$ and its ``green neighbours'') events $A_{v_i}$ such that $A_{v_i} \sim A_v$.  Since this is true for each vertex joined to $v$ by a green edge, there are at most $(d-2)^2$ events $A_{v_i}$ such that $A_{v_i} \sim A_v$.  Thus, for each event $A_v$, we wish $x$ and $y$ to satisfy
\begin{equation}\label{eqn:vertex}
    \mathbb{P}(A_v) \leq (1-p)^{d\varepsilon - 1} < y(1-x)^{2d-2}(1-y)^{(d-2)^2}.
\end{equation}

Let $x = \frac{1}{4d}, y = \frac{1}{2d^2}, p = \frac{1}{4\sqrt{d}}$.  Then (\ref{eqn:edge}) holds because
\begin{eqnarray*}
p^2 &=& \frac{1}{16d}
< \left(\frac{1}{4d}\right)\left(1-\frac{1}{4d}\right)^2\left(\frac{1}{e}\right) \\
&<& \left(\frac{1}{4d}\right)\left(1-\frac{1}{4d}\right)^2\left(1-\frac{1}{2d^2}\right)^{2d^2-1}  \\
&<& \left(\frac{1}{4d}\right)\left(1-\frac{1}{4d}\right)^2\left(1-\frac{1}{2d^2}\right)^{2d-2} \\
&=& x(1-x)^2(1-y)^{2d-2}.
\end{eqnarray*}
Before moving on to showing the second inequality holds, notice that from (\ref{eqn:d}) we have

$$\frac{d_0}{4\sqrt{d_0}\log(8d_0^2)+1} > \frac{1}{\varepsilon},$$
whence
$$\frac{1-{\varepsilon}d}{4\sqrt{d}} < \log\left(\frac{1}{8d^2}\right)\hspace{-0.5mm}.$$
Thus (\ref{eqn:vertex}) holds since
\begin{eqnarray*}
(1-p)^{d\varepsilon - 1} &\leq& e^{-p(d\varepsilon - 1)} = e^{\frac{1-{\varepsilon}d}{4\sqrt{d}}} \\
&<& e^{\log(\frac{1}{8d^2})} = \frac{1}{8d^2} \\
&<& \left(\frac{1}{2d^2}\right)\left(1-\frac{2d-2}{4d}\right)\left(1-\frac{(d-2)^2}{2d^2}\right) \\
&<& \left(\frac{1}{2d^2}\right)\left(1-\frac{1}{4d}\right)^{2d-2}\left(1-\frac{1}{2d^2}\right)^{(d-2)^2} \\
&=& y(1-x)^{2d-2}(1-y)^{(d-2)^2}.
\end{eqnarray*}
So, $G$ contains an $\h$-independent dominating set by the Local Lemma which contains all edges of $E_f$, and thus so does $H$.
\end{proof}

\subsection{Limitations of the ${\frak h}$-independent dominating set method}

Thomassen not only proves Theorem \ref{redgreen} in \cite{Th98}, but also the following much more wide ranging statement:

\begin{theorem}[\cite{Th98}]\label{theorem:rigd}
Let $H$ be a graph whose edges are partitioned into red and green subgraphs $R$ and $K$, respectively.  If $R$ is $r$-regular and $K$ is $k$-regular, with $r \geq 3$ and $k > 200\hspace{0.6mm}r\ln{r}$, then $H$ contains a red-independent green-dominating set.
\end{theorem}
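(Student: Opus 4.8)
The plan is to mimic the probabilistic argument used for Theorem~\ref{fixed edges}: construct the desired set by a single random selection and prove its existence via the Lov\'asz Local Lemma (Theorem~\ref{LLL}). Fix a real number $p \in (0,1)$ and let $S$ be obtained by placing each vertex of $H$ into $S$ independently with probability $p$. There are exactly two ways in which $S$ can fail to be a red-independent green-dominating set, and I would record one family of bad events for each. For every red edge $e=uv$ let $A_e$ be the event that both $u,v \in S$; avoiding all $A_e$ makes $S$ red-independent. For every vertex $v$ let $B_v$ be the event that neither $v$ nor any of its $k$ green-neighbours lies in $S$; avoiding all $B_v$ makes $S$ green-dominating. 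Since $e$ has two ends and the closed green-neighbourhood $N[v]$ of $v$ has size $k+1$, the relevant probabilities are $\mathbb{P}(A_e)=p^2$ and $\mathbb{P}(B_v)=(1-p)^{k+1}$.

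Next I would estimate the dependency degrees, which drive all of the numerology. The event $A_e$ shares a variable only with events determined by the two ends of $e$: at most $2(r-1)$ other edge-events (the other red edges at $u$ or $v$) and at most $2(k+1)$ vertex-events (those $B_w$ whose closed green-neighbourhood meets $\{u,v\}$). The event $B_v$ is determined by the $k+1$ vertices of $N[v]$, hence depends on at most $r(k+1)$ edge-events (the red edges meeting $N[v]$, each of those vertices carrying $r$ of them) and at most $(k+1)^2$ other vertex-events (those $B_w$ with $N[w]\cap N[v]\neq\emptyset$). Assigning a common weight $x$ to every $A_e$ and a common weight $y$ to every $B_v$, the two hypotheses of Theorem~\ref{LLL} become
\begin{equation*}
p^2 < x(1-x)^{2(r-1)}(1-y)^{2(k+1)} \quad\text{and}\quad (1-p)^{k+1} < y(1-x)^{r(k+1)}(1-y)^{(k+1)^2}.
\end{equation*}

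To satisfy both I would take $x$ of order $1/r^2$, $y$ of order $1/k^2$, and $p$ of order $1/r$; concretely something like $x=\tfrac{1}{2r^2}$, $y=\tfrac{1}{(k+1)^2}$ and $p=\tfrac{c}{r}$ for a suitable absolute constant $c$ slightly exceeding $\tfrac12$. With these choices the factors $(1-x)^{2(r-1)}$, $(1-y)^{2(k+1)}$ and $(1-y)^{(k+1)^2}$ are all bounded below by positive constants (using $r\ge 3$ and that $k$ is necessarily large), so the edge inequality reduces to $p^2 \lesssim x$, i.e.\ $p \lesssim 1/r$, which fixes the scale of $p$. The vertex inequality is where the hypothesis enters: after taking logarithms and using $(1-p)^{k+1}\le e^{-p(k+1)}$ together with $(1-x)^{r(k+1)}\approx e^{-rx(k+1)}$, it amounts to requiring $p(k+1) \gtrsim rx(k+1)+2\ln(k+1)+O(1)$, that is, the gap $p-rx$ (of order $1/r$) must dominate $\tfrac{2\ln k}{k}$. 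Since $\ln k$ and $\ln r$ are comparable once $k>200\,r\ln r$, this inequality holds precisely because $k$ exceeds a constant multiple of $r\ln r$, and the constant $200$ is exactly what is needed to absorb the lower-order terms when one bounds $\ln k$ by a multiple of $\ln r$.

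I expect the main obstacle to be this final balancing act rather than any isolated estimate. The edge inequality wants $p$ small (of order $1/r$) so that no red pair is entirely selected, while the vertex inequality wants $p$ large enough that every closed green-neighbourhood almost surely meets $S$; the admissible window for $p$ has width of order $1/r$ and only opens once $k$ is large compared with $r\ln r$. The delicate point is therefore to choose the three constants hidden in $x$, $y$ and $p$ so that both inequalities hold simultaneously for \emph{every} $r\ge 3$, and to convert the resulting requirement $k \gtrsim r\ln k$ into the clean hypothesis $k>200\,r\ln r$. Once this is done, Theorem~\ref{LLL} guarantees a positive probability that $S$ avoids every $A_e$ and every $B_v$, so such an $S$ exists and is the required red-independent green-dominating set.
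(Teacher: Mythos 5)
First, a point of order: the paper does not prove this statement---it is quoted from Thomassen \cite{Th98}, so there is no in-paper proof to compare against. The closest analogue is the paper's own proof of Theorem~\ref{fixed edges}, and your proposal follows exactly that template (which is also Thomassen's original method, as the paper notes when it says he ``combines Theorem~\ref{redgreen} with the Lov\'asz Local Lemma''): independent selection of each vertex with probability $p$, one bad event per red edge and one per closed green neighbourhood, and the asymmetric form of Theorem~\ref{LLL} with a single weight per event type. Your probabilities $\mathbb{P}(A_e)=p^2$ and $\mathbb{P}(B_v)=(1-p)^{k+1}$ and all four dependency counts ($2(r-1)$ and $2(k+1)$ for $A_e$; $r(k+1)$ and $(k+1)^2$ for $B_v$) are correct, and the final balancing act you identify does close. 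Concretely, with $x=\tfrac{1}{2r^2}$ and $y=\tfrac{1}{(k+1)^2}$ the edge condition forces $p<r^{-1}\sqrt{\tfrac12(1-x)^{2(r-1)}}$, which at the binding case $r=3$ gives $c<0.63$ for $p=c/r$, while the vertex condition (after taking logarithms) requires $c>\tfrac{1}{2(1-x)}+\tfrac{r\,(2\ln(k+1)+O(1))}{k+1}$, whose supremum over $r\ge 3$ and $k>200\,r\ln r$ is attained near $r=3$, $k\approx 660$ and is about $0.60$. So the admissible window for $c$ is nonempty for every $r\ge3$ and every admissible $k$, but it is narrow at $r=3$: ``$c$ slightly exceeding $\tfrac12$'' should be read as $c\approx 0.6$, since a value like $0.55$ survives the vertex inequality only if one evaluates $(1-p)^{k+1}$ exactly rather than through the lossy bound $e^{-p(k+1)}$. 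With that one quantitative adjustment the proposal is sound and complete in outline.
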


Ghandehari and Hatami state in a technical report that the conditions in Theorem \ref{theorem:rigd} are near-optimal:

\begin{theorem}[\cite{GH}]\label{theorem:optimal}
For every $r, k > 0$ with $k \leq r \ln r$ there exists a graph $H$ whose edges are coloured red or green, such that the spanning red subgraph is $r$-regular, and the spanning green subgraph is $k$-regular, and $H$ does not have a red-independent green-dominating set.
\end{theorem}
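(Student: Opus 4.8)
The plan is to give a probabilistic construction in which the red graph is rigid enough to force every maximal independent set to be a transversal of a fixed partition, while the green graph is chosen at random so that, with positive probability, no such transversal green-dominates.

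First I would fix the red subgraph $R$ to be a disjoint union of $m$ cliques $K_{r+1}$ on a vertex set partitioned into blocks $B_1,\dots,B_m$ of size $r+1$, living on $n = m(r+1)$ vertices, so that $R$ is $r$-regular. The crucial structural observation is that a set is independent in $R$ precisely when it contains at most one vertex from each block, so the maximal red-independent sets are exactly the transversals $S$ (one vertex per block), of which there are $(r+1)^m$. Since green-domination is preserved under adding vertices, any red-independent green-dominating set extends greedily to a maximal one, which is a transversal; hence it suffices to construct a $k$-regular green subgraph $K$, edge-disjoint from $R$, for which no transversal is green-dominating. This reduces the task from ruling out all red-independent sets to a first-moment computation over the $(r+1)^m$ transversals.

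Next I would take $K$ to be a uniformly random $k$-regular graph all of whose edges run between distinct blocks (a random $k$-regular subgraph of the complete $m$-partite graph), which keeps $K$ edge-disjoint from $R$ and $k$-regular; realizability of such a simple graph holds once $m$ is large relative to $r,k$. For a fixed transversal $S$ of size $m$ and a vertex $v$ not chosen into $S$, a single green edge of $v$ lands in $S$ with probability about $1/(r+1)$, so $v$ is undominated with probability about $(1-1/(r+1))^k \approx e^{-k/(r+1)}$. As there are $\Theta(n)$ such vertices, the probability that $S$ green-dominates is at most roughly $\exp(-c\,n\,e^{-k/(r+1)})$ for an absolute constant $c$, and a union bound bounds the expected number of green-dominating transversals by $\exp\!\big(\tfrac{n}{r+1}\ln(r+1) - c\,n\,e^{-k/(r+1)}\big)$. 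This is below $1$ exactly when $e^{-k/(r+1)}$ exceeds a constant multiple of $\ln(r+1)/(r+1)$, i.e. when $k \lesssim (r+1)\ln(r+1) - (r+1)\ln\ln(r+1)$, which matches $k \le r\ln r$ up to lower-order terms---precisely the near-optimal regime. Whenever this expectation is below $1$, some realization of $K$ admits no green-dominating transversal, and $H := R \cup K$ is the desired graph.

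The main obstacle I anticipate is making the bound on $\mathbb{P}[S \text{ green-dominating}]$ rigorous, since the events ``$v$ is dominated'' for distinct $v$ are not independent in a random regular graph. I would address this through the configuration model: expose the half-edges of a linear-sized, suitably spread-out collection of non-chosen vertices one at a time and bound, conditionally, the probability that each is dominated, using negative correlation (or a bounded-differences martingale) to conclude that the probability they are all dominated is at most $\prod (1-(1-o(1))e^{-k/(r+1)})$. A secondary technical point is to verify the parity and degree constraints guaranteeing that a simple $k$-regular between-blocks green graph exists, and to optimize the lower-order terms---either by a sharper estimate or by replacing the clique partition with a more edge-efficient $r$-regular red graph of the same independence number---so as to push the threshold from $r\ln r - \Theta(r\ln\ln r)$ up to the stated $r\ln r$.
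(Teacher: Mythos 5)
This theorem is quoted from an unpublished manuscript of Ghandehari and Hatami; the paper you were given contains no proof of it, so there is nothing to compare your argument against line by line, and your proposal has to stand on its own. Its skeleton is reasonable: taking the red graph to be a disjoint union of copies of $K_{r+1}$, observing that every red-independent green-dominating set extends to a maximal red-independent set, i.e.\ a transversal of the blocks, and then applying a first-moment bound over the $(r+1)^{n/(r+1)}$ transversals for a random inter-block $k$-regular green graph, is a natural and essentially correct reduction, and the configuration-model issues you flag are indeed handleable.

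The genuine gap is quantitative, and you have in effect computed it yourself: the union bound requires $\mathbb{P}[S \text{ dominating}] < (r+1)^{-n/(r+1)}$, which (with $\mathbb{P}[v \text{ undominated}] \approx (1-\tfrac{1}{r+1})^k$ and about $\tfrac{rn}{r+1}$ vertices off $S$) forces $r\,(1-\tfrac{1}{r+1})^k \gtrsim \ln(r+1)$, i.e.\ $k \lesssim r\ln r - r\ln\ln r$. This is \emph{not} a lower-order correction that a sharper martingale estimate will remove: the loss comes from the entropy of the transversal family, not from slack in the concentration step. Nor can it be fixed by ``a more edge-efficient red graph of the same independence number'': every maximal independent set of an $r$-regular graph is a dominating set of it and hence has size at least $n/(r+1)$, so the clique partition already minimizes the size of the sets you must rule out, and any red graph attaining this bound is a disjoint union of perfect codes with a comparably large number of transversal-like maximal independent sets. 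Consequently your argument proves the statement only for $k \le r\ln r - \Theta(r\ln\ln r)$ and leaves the top range $(r\ln r - \Theta(r\ln\ln r),\, r\ln r]$ open; closing it needs a genuinely different idea (a non-uniform or explicit green construction tailored to the block structure, an alteration/second-moment argument, or exploiting correlations among transversals) rather than the optimization of constants you describe as a secondary technical point.
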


However, as this result is meaningful only for $r \geq 3$ (and $k$ sufficiently large), the limits of the $\h$-independent dominating set approach for guaranteeing a second hamiltonian cycle are worth deeper exploration.

Thomassen~\cite{Th98} first observed that it is impossible to prove Sheehan's conjecture by the $\h$-independent dominating set approach, as there are infinitely many 4-regular hamiltonian graphs having no independent dominating set with respect to some prescribed hamiltonian cycle. In fact, it is straightforward to see that the complete graph on five vertices is the smallest such example. As it is known that every hamiltonian $k$-regular graph, where $k \ge 3$ is an odd integer, contains at least three hamiltonian cycles~\cite{Th78}, Thomassen poses the following problem:

\begin{ques}[\cite{Th98}]\label{q:6-reg}
Does every $6$-regular graph containing a hamiltonian cycle $\h$ contain a $\h$-independent dominating set?
\end{ques}

In \cite{HSV07}, the existence of further infinite families of hamiltonian $4$-regular graphs without $\h$-independent dominating sets is shown, though all have small girth.  As such, the authors suggest the following questions:

\begin{ques}[\cite{HSV07}]\label{q:5-reg}
Does there exist a $5$-regular graph with a hamiltonian cycle $\h$ containing no $\h$-independent dominating set?
\end{ques}

\begin{ques}[\cite{HSV07}]\label{q:4-reg+girth}
Does every $4$-regular graph of sufficiently large girth with a hamiltonian cycle $\h$ have a $\h$-independent dominating set?
\end{ques}

Motivated by these questions, we prove in this section the existence of 5- and 6-regular graphs that have a hamiltonian cycle ${\frak h}$ but no ${\frak h}$-independent dominating set, thus answering Questions \ref{q:6-reg} and \ref{q:5-reg}.
Indeed, to obtain an infinite family of desired graphs, it suffices to construct one such graph. 
\begin{lemma}
If there exists a $k$-regular graph $H$ containing a hamiltonian cycle $\mathfrak{h}_H$ without $\mathfrak{h}_H$-independent dominating sets, then there exist infinitely many $k$-regular graphs containing a hamiltonian cycle $\mathfrak{h}$ without $\mathfrak{h}$-independent dominating sets.
\end{lemma}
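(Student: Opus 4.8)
The plan is to produce the infinitely many graphs by a gluing construction that chains together several copies of $H$ along a single cycle edge, so that regularity and hamiltonicity are preserved while the obstruction to an $\mathfrak{h}$-independent dominating set is inherited from $H$. First I would fix an edge $uv \in E(\mathfrak{h}_H)$. For every integer $t \ge 2$, take $t$ vertex-disjoint copies $H_1, \dots, H_t$ of $H$, writing $u_i v_i$ for the copy of $uv$ and $\mathfrak{h}_i$ for the copy of $\mathfrak{h}_H$ in $H_i$. Delete each edge $u_iv_i$ and add the \emph{crossing} edges $v_i u_{i+1}$ for $1 \le i \le t$ (indices taken modulo $t$), obtaining a graph $G_t$. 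Since each of $u_i, v_i$ loses exactly one incident edge and gains exactly one, $G_t$ is $k$-regular; it is simple because all crossing edges run between distinct copies. Concatenating the hamiltonian $u_i$--$v_i$ paths $\mathfrak{h}_i - u_iv_i$ with the crossing edges yields a hamiltonian cycle $\mathfrak{h}$ of $G_t$, and by construction every crossing edge lies on $\mathfrak{h}$ (is red) while every green edge of $G_t$ has both ends in a single copy.

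The heart of the argument is to show that $G_t$ has no $\mathfrak{h}$-independent dominating set. Suppose $S$ were one, and put $S_i = S \cap V(H_i)$. The key observation is that the colouring is preserved locally: for every vertex $w \in V(H_i)$, its green neighbourhood in $G_t$ coincides with its green neighbourhood in $H_i$ (this is immediate for $w \notin \{u_i, v_i\}$, and for the endpoints it holds because deleting $u_iv_i$ and adding a crossing edge only exchanges one red neighbour for another). Consequently, since green edges never leave a copy, green-domination of $V(H_i)$ by $S$ forces $S_i$ to green-dominate $H_i$ with respect to $\mathfrak{h}_i$. As for independence, the only red edge of $H_i$ absent from $G_t$ is $u_iv_i$, so $S_i$ is $\mathfrak{h}_i$-independent unless $\{u_i, v_i\} \subseteq S_i$. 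Hence, if for some $i$ we have $\{u_i, v_i\} \not\subseteq S_i$, then $S_i$ is an $\mathfrak{h}_i$-independent dominating set of $H_i \cong H$, contradicting the hypothesis.

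It remains to rule out the case $\{u_i, v_i\} \subseteq S_i$ for every $i$. But then, for each $i$, both ends of the crossing edge $v_i u_{i+1}$---namely $v_i \in S_i$ and $u_{i+1} \in S_{i+1}$---belong to $S$; since these crossing edges are red, this contradicts the $\mathfrak{h}$-independence of $S$. Thus $G_t$ admits no $\mathfrak{h}$-independent dominating set. As $|V(G_t)| = t\,|V(H)|$ takes infinitely many distinct values over $t \ge 2$, the graphs $G_t$ form the desired infinite family. The one delicate point, which the crossing edges are engineered to handle, is precisely this interplay: deleting $u_iv_i$ a priori allows $S$ to contain both of its ends within a copy, so that $S_i$ need not be $\mathfrak{h}_i$-independent; but the red crossing edges make such a global selection inconsistent, so the obstruction present in $H$ must resurface in at least one copy.
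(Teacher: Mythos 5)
Your proposal is correct and follows essentially the same route as the paper: delete a cycle edge from each of $t$ copies of $H$, rejoin the copies cyclically by a matching of new (red) edges, and observe that any green-dominating, red-independent set would have to contain both ends of the deleted edge in every copy (else that copy yields an $\mathfrak{h}_H$-independent dominating set of $H$), which the red crossing edges forbid. Your write-up just spells out the regularity, hamiltonicity, and locality-of-green-edges checks that the paper leaves implicit.
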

\begin{proof}
	Let $e$ be an edge in $\mathfrak{h}_H$, and let $H'$ and $\mathfrak{h}_H'$ be obtained from $H$ and $\mathfrak{h}_H$ by deleting $e$, respectively. Then every set that is dominating in $(V(H'), E(H') \setminus E(\mathfrak{h}_H')) = (V(H), E(H) \setminus E(\mathfrak{h}_H))$ and independent in $(V(H'), E(\mathfrak{h}_H'))$ must contain both end-vertices of $\mathfrak{h}_H'$. It is obvious that we can take arbitrarily many copies of $H'$ and join them by independent edges to obtain a $k$-regular graph $G$ with a hamiltonian cycle $\mathfrak{h}$ such that $\mathfrak{h}$ contains all copies of $\mathfrak{h}_H'$. Any dominating set of $(V(G), E(G) \setminus E({\frak h}))$ contains all end-vertices of the new edges used to construct $G$, and hence is not an independent set of $(V(G), E({\frak h}))$.
\end{proof}

In the next subsections we present our results on $k$-regular graphs with a hamiltonian cycle ${\frak h}$ containing no ${\frak h}$-independent dominating set for $5 \leq k \leq 8$. These results were obtained by a computer search where we first use the generator GENREG~\cite{Me99} to construct all $(k-2)$-regular graphs on $n$ vertices. On these generated graphs, we apply a program which computes all minimal dominating sets of these graphs, enumerates all hamiltonian cycles in the complement graphs and checks whether any hamiltonian cycle ${\frak h}$ can be added to the original graph such that no minimal dominating set is ${\frak h}$-independent.

\subsubsection{The 5-regular case: Solving Question~\ref{q:5-reg} of Haxell et al.\ }

By a computer search using the approach described above, we determined the following:

\begin{proposition}
There exist infinitely many $5$-regular graphs with a hamiltonian cycle ${\frak h}$ containing no ${\frak h}$-independent dominating set.
\end{proposition}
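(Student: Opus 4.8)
The Lemma preceding this Proposition reduces the infinitude claim to exhibiting a single $5$-regular graph $H$ with a hamiltonian cycle $\mathfrak{h}_H$ having no $\mathfrak{h}_H$-independent dominating set. So my entire effort goes into producing one such witness, after which the Proposition follows immediately from the Lemma with $k=5$. Since the authors explicitly state in the preceding text that the result was ``obtained by a computer search,'' my proof proposal is fundamentally a search-and-verify strategy rather than a clever hand construction, and I would structure it exactly as the paper's preceding paragraph describes.

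\textbf{Key steps.}
First I would exploit the structural equivalence that makes the search tractable: finding an $\mathfrak{h}$-independent dominating set is a condition on two complementary subgraphs, namely independence in $(V(H), E(\mathfrak{h}))$ and domination in $(V(H), E(H)\setminus E(\mathfrak{h}))$. Following the described approach, I would use GENREG to generate all $3$-regular (i.e.\ $(k-2)$-regular) graphs $K$ on $n$ vertices for small $n$; these are the candidate ``green'' subgraphs $E(H)\setminus E(\mathfrak{h})$ in which we need a dominating set. For each such $K$, I would enumerate all minimal dominating sets $D$ of $K$ (it suffices to check minimal ones, since any dominating set contains a minimal one and we need independence of \emph{some} dominating set). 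Then I would enumerate all hamiltonian cycles $\mathfrak{h}$ in the complement of $K$ (each such $\mathfrak{h}$, together with $K$, assembles into a $5$-regular graph $H$ with $\mathfrak{h}$ as a hamiltonian cycle), and for each $\mathfrak{h}$ I would test whether \emph{every} minimal dominating set $D$ of $K$ fails to be independent in $(V(H), E(\mathfrak{h}))$. The first $(K,\mathfrak{h})$ pair passing this test yields the desired witness $H$.

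\textbf{The main obstacle.}
The hard part is the combinatorial explosion: the number of $3$-regular graphs and the number of hamiltonian cycles in their complements both grow quickly, and for each candidate one must verify a universally quantified condition over \emph{all} minimal dominating sets, of which there can be exponentially many. I would mitigate this with aggressive pruning: break symmetry using the automorphism group of $K$ so that isomorphic $(K,\mathfrak{h})$ pairs are not reprocessed, discard early any $K$ with small minimal dominating sets that are easy to extend independently, and short-circuit the hamiltonian-cycle loop as soon as a single $\mathfrak{h}$ survives. The secondary obstacle is \emph{trust in the computation}: since the claim is negative and hinges on a single machine-found example, correctness must be certified. I would therefore output the explicit witness $H$ together with $\mathfrak{h}_H$ and the finite list of all its minimal dominating sets (each annotated with an $\mathfrak{h}_H$-edge witnessing its non-independence), so that the certificate can be checked by hand or by an independent verifier, in keeping with the paper's stated policy of dual-algorithm verification and publicly posted certificates.
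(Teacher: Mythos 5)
Your proposal is correct and follows essentially the same route as the paper: reduce to a single witness via the preceding Lemma, then find that witness by generating $3$-regular graphs with GENREG, enumerating their minimal dominating sets and the hamiltonian cycles of their complements, and checking that no minimal dominating set is $\mathfrak{h}$-independent (which, as you note, suffices since any independent dominating set contains an independent minimal one). The paper does exactly this and exhibits the resulting order-$10$ witness together with its explicit list of minimal dominating sets, matching your proposed certificate.
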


The smallest such graphs $G$ have order 10, one of which is given in Fig.~\ref{fig:5reg10}, where the dashed edges denote the hamiltonian cycle ${\frak h}$. It is straightforward to show that the set of minimal dominating sets of $(V(G), E(G) \setminus E(\frak{h}))$ is $\mathcal{D}_3 \cup \mathcal{D}_4 \cup \mathcal{D}_5$, where $\mathcal{D}_3 := \{\{a_i, b_i, b_{i - 1}\}, \{a_i, a_{i + 1}, b_i\} : i = 1, \dots, 5\}\}$, $\mathcal{D}_4 := \{\{a_i, a_{i + 2}, b_j, b_{j + 2}\} : i, j = 1, \dots, 5\}$, and $\mathcal{D}_5 := \{\{a_1, a_2, a_3, a_4, a_5\}, \{b_1, b_2, b_3, b_4, b_5\}\}$ (all indices are taken modulo 5). Then it is easy to verify that every dominating set of $(V(G), E(G) \setminus E(\frak{h}))$ must induce some edge in $(V(G), E(\frak{h}))$ and $G$ has no ${\frak h}$-independent dominating set.

\begin{figure}[h!]
	\begin{center}
		\includegraphics[scale=1.2]{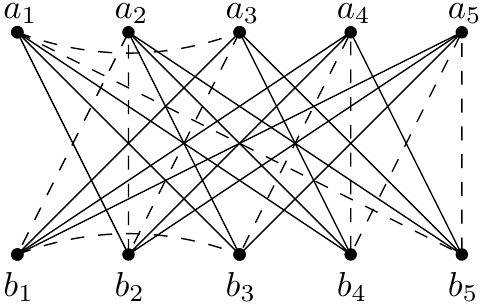}
		\caption{A 5-regular graph on 10 vertices with a hamiltonian cycle $\frak{h}$ (dashed edges) having no $\frak{h}$-independent  dominating set.}\label{fig:5reg10}
	\end{center}
\end{figure}

Table~\ref{table:counts_noRIGDSet_5reg} in the Supplementary material lists the number of pairs $(G',\frak h)$ where $G'$ is a 3-regular graph and ${\frak h}$ is a hamiltonian cycle in its complement such that no minimal dominating set in $G'$ is ${\frak h}$-independent. 

We now present a graph $G$ that has a hamiltonian path $\frak{p}$ such that all dominating sets of $(V(G), E(G) \setminus E(\mathfrak{p}))$ are not independent in $(V(G), E(\frak{p}))$, the end-vertices of $\frak{p}$ are not adjacent, and $G$ becomes 5-regular if they are joined by an edge. Such a graph is given in Fig.~\ref{fig:5reg12inf}, where the dashed edges denote a hamiltonian path $\frak{p}$. It is straightforward to show that the set of minimal dominating sets of $(V(G), E(G) \setminus E(\frak{p}))$ is $\mathcal{D}_4 \cup \mathcal{D}_5 \cup \mathcal{D}_6$, where
$\mathcal{D}_4 := \{\{b_i, c_{i + 1}, c_{i + 2}, d_j\}, \{b_i, c_{i + 1}, c_{i + 2}, e\}, \{b_i, b_{i + 1}, c_{i + 2}, d_{i + 2}\}, \{b_i, b_{i + 1}, c_{i + 2}, e\}, \{a_1, a_2, c_i, d_i\} : i, j = 1, 2, 3\} \cup \{\{b_1, b_2, b_3, e\}\} \cup \{\{a_i, b_j, d_j, e\}, \{a_i, b_j, c_k, d_k\} : i = 1, 2 \textrm{ and } j, k = 1, 2, 3\}$,
$\mathcal{D}_5 := \{\{a_i, b_j, d_{j + 1}, d_{j + 2}, e\}, \{a_i, b_j, d_1, d_2, d_3\}, \{a_1, a_2, d_j, d_{j + 1}, e\} : i = 1, 2 \textrm{ and } j = 1, 2, 3\} \cup \{\{a_1, a_2, $ $d_1, d_2, d_3\}\}$, and
$\mathcal{D}_6 := \{\{a_1, a_2, c_1, c_2, c_3, e\}, \{b_1, b_2, b_3, d_1, d_2, d_3\}\}$ (all indices are taken modulo 3). It is not hard to verify that every set from $\mathcal{D}_4 \cup \mathcal{D}_5 \cup \mathcal{D}_6$ is not independent in $(V(G), E(\frak{p}))$.

\begin{figure}[h!]
	\begin{center}
		\includegraphics[scale=1.2]{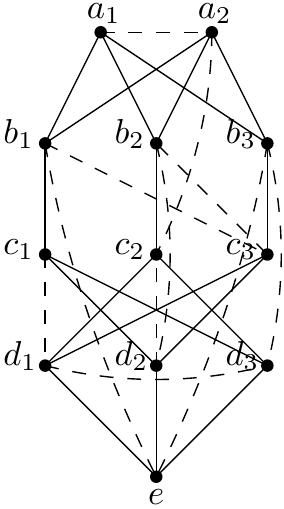}
		\caption{A graph $G$ on 12 vertices with a hamiltonian path $\frak{p}$ (dashed edges) that becomes 5-regular if the edge $a_1 c_1$ is added. Every vertex set of $G$ is not dominating in $(V(G), E(G) \setminus E(\frak{p}))$ or not independent in $(V(G), E(\frak{p}))$.}\label{fig:5reg12inf}
	\end{center}
\end{figure}

\subsubsection{The 6-regular case: Solving the first open case of Thomassen's Question~\ref{q:6-reg}}

Combining theoretical arguments and an exhaustive computer search, we established the following:

\begin{proposition}
There exist infinitely many $6$-regular graphs with a hamiltonian cycle ${\frak h}$ containing no ${\frak h}$-independent dominating set.
\end{proposition}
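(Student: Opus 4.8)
The plan is to mirror the strategy just used in the 5-regular case and thereby reduce the infinite-family claim to the construction of a single 6-regular graph. By the Lemma proven above, it suffices to exhibit \emph{one} 6-regular graph $H$ with a hamiltonian cycle $\mathfrak{h}_H$ admitting no $\mathfrak{h}_H$-independent dominating set; the gluing construction then immediately yields infinitely many such graphs. Thus the entire problem is an existence question for a finite object, and the natural division of labour is: isolate a small candidate $H$ by computer search, then provide a human-verifiable certificate that no dominating set of the green graph $(V(H), E(H)\setminus E(\mathfrak{h}_H))$ is independent in the red graph $(V(H), E(\mathfrak{h}_H))$.

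The search itself follows the framework described before the 5-regular subsection: First I would use GENREG to generate all $4$-regular graphs $G'$ on $n$ vertices (for increasing $n$), treat $G'$ as the \emph{green} subgraph, enumerate the minimal dominating sets of $G'$, enumerate the hamiltonian cycles $\mathfrak{h}$ in the complement $\overline{G'}$, and test whether some $\mathfrak{h}$ can serve as the red cycle so that every minimal dominating set of $G'$ induces at least one edge of $\mathfrak{h}$. A pair $(G',\mathfrak{h})$ surviving this test gives the desired $H = G' \cup \mathfrak{h}$. It suffices to check that only \emph{minimal} dominating sets need to be ruled out, since any dominating set contains a minimal one, and a superset of a non-independent set is non-independent. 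Once a smallest working $H$ is found, I would record its red and green structure explicitly (as a figure with dashed red edges, matching the presentation style of Fig.~\ref{fig:5reg10}).

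The theoretical part of the argument is then the verification that this particular $H$ has the claimed property. Following the 5-regular template, I would partition the minimal dominating sets of the green graph into families $\mathcal{D}_3, \mathcal{D}_4, \dots$ by cardinality, exploit the graph's symmetry to describe each family compactly by orbit representatives, and then check for each representative that it spans some red edge of $\mathfrak{h}_H$. The phrase ``combining theoretical arguments and an exhaustive computer search'' in the statement suggests exactly this split: the exhaustive search locates $H$ and certifies completeness of the dominating-set list, while the theoretical argument is the short, symmetry-reduced case check confirming non-independence.

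I expect the main obstacle to be twofold. Computationally, the bottleneck is that the relevant smallest example is likely larger than in the 5-regular case, so the number of 4-regular graphs to scan—and, for each, the number of minimal dominating sets and of hamiltonian cycles in the complement—grows quickly; pruning (e.g.\ by connectivity, girth, or automorphism reduction) will be essential to keep the search tractable. Theoretically, the delicate point is organising the minimal dominating sets into a \emph{provably complete} and verifiable list: one must be confident that no minimal dominating set has been overlooked, which is precisely why an independent recomputation of the dominating-set families—and a direct human check that each spans a red edge—is needed to make the certificate trustworthy rather than merely machine-asserted.
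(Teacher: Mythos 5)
Your proposal follows essentially the same route as the paper: reduce the infinite-family claim to exhibiting a single witness via the preceding lemma, locate that witness by an exhaustive search over $4$-regular green graphs paired with hamiltonian cycles of their complements, and certify it by listing the minimal dominating sets of the green graph by cardinality (using symmetry) and checking that each induces a red edge. The paper carries this out with a concrete $15$-vertex example whose minimal dominating sets are grouped into families $\mathcal{D}_3 \cup \mathcal{D}_4 \cup \mathcal{D}_5 \cup \mathcal{D}_6$, exactly as you describe; the only ingredient absent from your write-up is the explicit graph itself.
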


Along the way, we determined all such graphs up to 15 vertices. The counts are summarised in Table~\ref{table:counts_noRIGDSet_6reg} in the Supplementary material. The last column of this table lists the number of pairs $(G',\frak h)$ where $G'$ is a 4-regular \textit{bipartite} graph and ${\frak h}$ is a hamiltonian cycle in its complement such that no minimal dominating set in $G'$ is ${\frak h}$-independent. The reason why we also specifically looked into bipartite graphs $G'$ is that it follows from Table~\ref{table:counts_noRIGDSet_5reg} that in the 5-regular case there are examples of minimal order where $G'$ is bipartite and restricting the search to bipartite graphs allowed us to go several orders further. These computations required about 10 CPU years and the pairs from Table~\ref{table:counts_noRIGDSet_6reg} can be downloaded from~\cite{code}. 

One of the 6-regular graphs $G$ with a hamiltonian cycle $\frak{h}$ having no $\frak{h}$-independent dominating set is given in Fig.~\ref{fig:6reg15}, where the dashed edges denote a hamiltonian cycle $\frak{h}$ of $G$. 

Denote by $S_3$ the permutation group on $\{1, 2, 3\}$. It is straightforward to show that the set of minimal dominating sets of $(V(G), E(G) \setminus E(\frak{h}))$ is $\mathcal{D}_3 \cup \mathcal{D}_4 \cup \mathcal{D}_5 \cup \mathcal{D}_6$, where
$\mathcal{D}_3 := \{\{a_{1i}, a_{2i}, a_{3i}\} : i = 1, 2\} \cup \{\{b_{1\sigma(1)}, b_{2\sigma(2)}, b_{3\sigma(3)}\} : \sigma \in S_3\}$,
$\mathcal{D}_4 := \{\{a_{i1}, a_{i2}, a_{i+1, 2}, a_{i+2, 1}\} : i = 1, 2, 3\} \cup \{\{b_{ij}, b_{i, j+1}, b_{i+1, j+2}, b_{i+2, j+2}\} : i, j = 1, 2, 3\}$,
$\mathcal{D}_5 := \{\{a_{ij}, a_{i+1, k}, b_{i+1, \ell}, b_{i+2, \ell+1}, b_{i+2, \ell+2}\},$  $\{a_{ij}, a_{i+1, k}, b_{i+1, \ell}, b_{i+1, \ell+1}, b_{i+2, \ell+2}\} : i, \ell = 1, 2, 3 \textrm{ and } j, k = 1, 2\} \cup \{\{a_{i1}, a_{i2}, a_{i+1, 2}, b_{i+1, j}, b_{i+2, k}\},$ $\{a_{i1}, a_{i+1, 1}, a_{i+1, 2}, b_{i+1, j}, b_{i+2, k}\} : i, j, k = 1, 2, 3\}$,
and
$\mathcal{D}_6 := \{\{a_{i1}, a_{i+1, j}, a_{i+2, 2}, b_{k1}, b_{k2}, b_{k3}\} : i, k = 1, 2, 3 \textrm{ and } j = 1, 2\}$ (all indices are taken modulo 3). It is not hard to verify that every set from $\mathcal{D}_3 \cup \mathcal{D}_4 \cup \mathcal{D}_5 \cup \mathcal{D}_6$ is not independent in $(V(G), E(\frak{h}))$ and hence $G$ has no $\frak{h}$-independent dominating set.

\begin{figure}[h!]
	\begin{center}
		\includegraphics[scale=1.2]{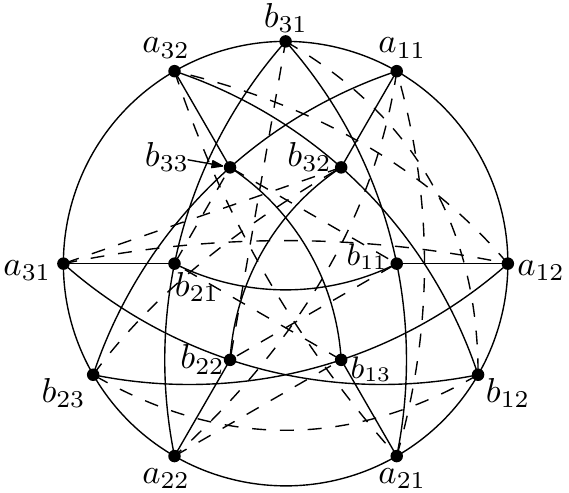}\quad\includegraphics[scale=1.2]{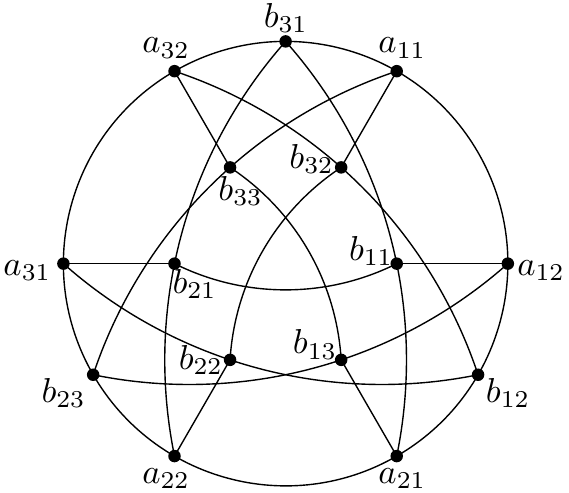}
		\caption{On the left-hand side, a 6-regular graph on 15 vertices with a hamiltonian cycle~$\frak{h}$ (dashed edges) having no $\frak{h}$-independent dominating set is shown. The right-hand side depicts the same graph without $E(\frak{h})$.}\label{fig:6reg15}
	\end{center}
\end{figure}

We append one more graph which is 6-regular, has a hamiltonian cycle, yet has no associated dominating independent sets. The graph is given in Fig.~\ref{fig:6reg16}, with the details omitted.

\begin{figure}[h!]
	\begin{center}
		\includegraphics[scale=1.2]{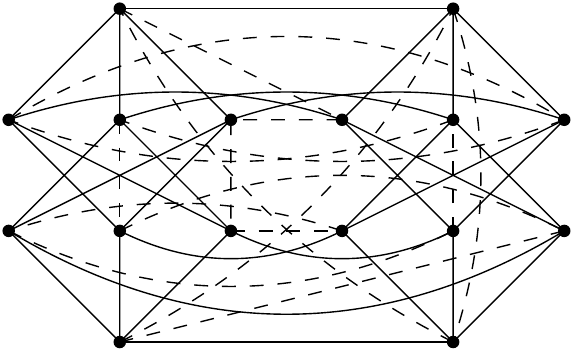}\quad\includegraphics[scale=1.2]{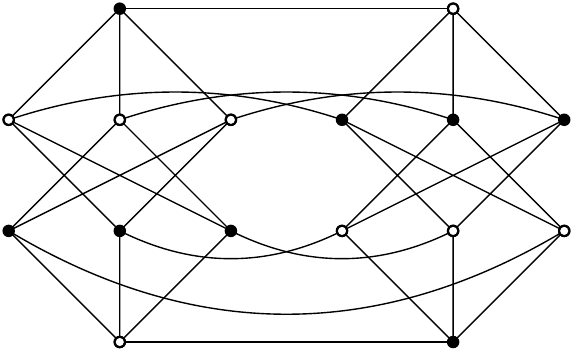}
		\caption{On the left-hand side, a 6-regular graph on 16 vertices with a hamiltonian cycle (dashed edges) having no associated independent dominating set is shown. The right-hand side depicts the same graph without the hamiltonian cycle; we note that the resulting graph is bipartite.}\label{fig:6reg16}
	\end{center}
\end{figure}

\subsubsection{The 7- and 8-regular case}

Concluding this section, we performed exhaustive computer searches for the 7- and 8-regular case but no such graphs containing a hamiltonian cycle $\frak{h}$ having no $\frak{h}$-independent dominating set were found. These computations are summarised in the following observation. In total this required approximately 15 CPU years.

\begin{observation}
There are neither $7$- nor $8$-regular graphs (bipartite graphs) containing a hamiltonian cycle $\frak{h}$ and having no $\frak{h}$-independent dominating set of order at most $14$ (at most $20$).
\end{observation}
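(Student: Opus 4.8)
The statement is purely a record of an exhaustive computational search, so my plan is to mirror the enumeration strategy already used throughout Section~\ref{sect:rigd} rather than to seek a human-readable argument. The key structural reduction is the same one exploited in the $5$- and $6$-regular cases: a $k$-regular graph $H$ together with a hamiltonian cycle $\frak{h}$ corresponds to a pair $(G',\frak{h})$, where $G' := (V(H), E(H)\setminus E(\frak{h}))$ is a $(k-2)$-regular graph and $\frak{h}$ is a hamiltonian cycle in the complement of $G'$. An $\frak{h}$-independent dominating set of $H$ is exactly a dominating set of $G'$ that is independent in $(V(H),E(\frak{h}))$. Thus for $k=7$ I would generate all $5$-regular graphs $G'$ on $n \le 14$ vertices with GENREG~\cite{Me99}, and for $k=8$ all $6$-regular graphs $G'$ on $n \le 20$ vertices (and separately all $6$-regular \emph{bipartite} $G'$, for the parenthetical claim); the $7$-regular bipartite case requires $n$ even anyway, so the bipartite subcase is handled in parallel.

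For each generated $G'$ I would enumerate its minimal dominating sets, then enumerate the hamiltonian cycles $\frak{h}$ of the complement $\overline{G'}$, and check whether there exists a pairing in which \emph{no} minimal dominating set of $G'$ is independent in $(V,E(\frak{h}))$. Since any dominating set contains a minimal one, and independence is preserved under taking subsets, it suffices to test independence against the minimal dominating sets only; this is the same observation that made the $5$- and $6$-regular searches tractable. If for some $(G',\frak{h})$ every minimal dominating set of $G'$ induces an edge of $\frak{h}$, that pair would yield a $k$-regular counterexample; the Observation asserts that no such pair occurs within the stated orders. To report a negative result with confidence I would, as elsewhere in the paper, run two independent implementations (one enumerating dominating sets directly, one working through the complement's hamiltonian cycles) and cross-check their verdicts, and I would sanity-check the generation counts of $5$- and $6$-regular graphs against known values.

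The main obstacle is purely computational scale rather than conceptual: the number of $6$-regular graphs on up to $20$ vertices is enormous, and for each one both the minimal-dominating-set enumeration and the hamiltonian-cycle enumeration in the dense complement are expensive, which is consistent with the roughly $15$ CPU years reported. The practical difficulty is therefore aggressive pruning---discarding a pair $(G',\frak{h})$ as soon as a single minimal dominating set is found to be $\frak{h}$-independent, and, symmetrically, discarding $G'$ entirely once it is clear that every candidate $\frak{h}$ admits such a set---together with exploiting the bipartite restriction to push $n$ higher exactly as was done in the $6$-regular case. No step beyond careful engineering and redundant verification is expected; the content of the Observation is the \emph{absence} of counterexamples up to the stated bounds, which only an exhaustive search can certify.
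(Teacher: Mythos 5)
Your method is exactly the paper's: reduce to pairs $(G',\frak{h})$ where $G' = (V(H), E(H)\setminus E(\frak{h}))$ is $(k-2)$-regular, generate the candidates $G'$ with GENREG, enumerate their minimal dominating sets, enumerate the hamiltonian cycles of the complement, and use the (correct) reduction that an $\frak{h}$-independent dominating set exists if and only if some \emph{minimal} dominating set of $G'$ is independent in $(V,E(\frak{h}))$. The paper gives nothing beyond this description plus the reported CPU time, so at the level of approach there is nothing to add.

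However, you have misparsed the order bounds, and this changes what your computation would certify. The parentheticals in the Observation pair up exactly as in the $6$-regular case (cf.\ Table~\ref{table:counts_noRIGDSet_6reg}): for \emph{both} $k=7$ and $k=8$, general graphs are checked up to order $14$, while graphs whose $G'$ is bipartite are checked up to order $20$. You instead attach ``at most $14$'' to $k=7$ and ``at most $20$'' to $k=8$. Two consequences. First, your plan never examines $5$-regular bipartite $G'$ on $16$, $18$, or $20$ vertices --- you assert the $7$-regular bipartite subcase is ``handled in parallel'' with the general search up to order $14$ --- so the $7$-regular half of the parenthetical claim is left unverified. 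Second, you commit to generating \emph{all} $6$-regular graphs on up to $20$ vertices; these already number in the tens of millions at order $14$ and grow by orders of magnitude with each additional vertex, so the search at order $20$ is hopeless. That is precisely why the paper, here and in the $6$-regular case, only pushes past order $14$ under the bipartite restriction. As written, your computation would therefore prove less than the Observation states while costing vastly more than the reported $15$ CPU years.
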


\section{\boldmath$(k,\ell)$-regular graphs}
\label{sect:kl_regular}

The next observation follows directly from a result of Thomason~\cite{Th78}.

\begin{observation}
\label{oddDegreeObservation}
For every positive odd (not necessarily distinct) integers $k \ge 3$ and $\ell \ge 3$, we have $h(k,\ell) \ge 3$.
\end{observation}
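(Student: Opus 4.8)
The plan is to deduce the bound directly from Thomason's parity theorem. Thomason's lollipop technique~\cite{Th78} shows that in any graph all of whose vertices have odd degree, every edge is contained in an \emph{even} number of hamiltonian cycles. Since $k$ and $\ell$ are both odd, every vertex of a hamiltonian $(k,\ell)$-regular graph $G$ has odd degree, so this parity statement applies to each edge of $G$. The whole argument is then a short counting deduction.

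First I would record that, being hamiltonian, $G$ contains at least one hamiltonian cycle $\h$. Fixing an arbitrary edge $e$ of $\h$, Thomason's theorem tells us that the number of hamiltonian cycles through $e$ is even; as $\h$ itself passes through $e$, this number is at least $2$, so there is a hamiltonian cycle distinct from $\h$. This already yields $h(k,\ell)\ge 2$, and the remaining task is to promote the bound to $\ge 3$.

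To exclude the possibility of exactly two hamiltonian cycles, I would argue by contradiction: suppose $G$ has precisely two, say $\h_1$ and $\h_2$. Applying the parity statement to each edge of $\h_1$, every such edge must lie in a second hamiltonian cycle, and the only available candidate is $\h_2$; hence $\h_2$ contains every edge of $\h_1$. Since $\h_1$ and $\h_2$ each consist of exactly $|V(G)|$ edges, this forces $\h_1=\h_2$, a contradiction. Therefore $G$ has at least three hamiltonian cycles, and $h(k,\ell)\ge 3$ follows.

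Because the entire proof is a brief corollary of Thomason's theorem, there is no substantial obstacle once that result is invoked; the only step meriting care is the passage from $2$ to $3$, where one must use that a hamiltonian cycle is determined by its edge set so that two distinct hamiltonian cycles cannot share all of their edges. The odd-odd hypothesis is genuinely needed, since if some degree is even Thomason's parity conclusion fails and the bound collapses, as witnessed by $h(3,4)=1$ recorded earlier in the excerpt.
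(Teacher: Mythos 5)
Your proof is correct and takes exactly the route the paper intends: the paper states Observation~\ref{oddDegreeObservation} as following ``directly from a result of Thomason,'' namely the parity theorem that in a graph with all vertex degrees odd every edge lies on an even number of hamiltonian cycles, which is precisely what you invoke. Your spelled-out deduction (parity forces a second cycle through any edge of a given hamiltonian cycle, and exactly two cycles is impossible since they would then share all $|V(G)|$ edges and coincide) is the argument the paper leaves implicit, and it is sound.
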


If we allow 2-valent vertices, the situation is easy to describe. For an integer $k \ge 2$, consider a $k$-regular graph containing a hamiltonian cycle ${\frak h}$. Insert on all (or indeed all but one) edges of ${\frak h}$ a 2-valent vertex. The resulting graph contains only vertices of degree 2 or $k$, at least one of each, and exactly one hamiltonian cycle. Thus, we have:

\begin{observation}
\label{(2,q)-regularUHG}
For every integer $k \ge 2$ and every $n$ such that there exists a hamiltonian $k$-regular graph of order $n$, we have $h_{2n-1}(2,k)=h_{2n}(2,k)=1$.
\end{observation}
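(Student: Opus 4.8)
The plan is to give an explicit construction that turns a hamiltonian $k$-regular graph into a graph with only two vertex degrees ($2$ and $k$) and a unique hamiltonian cycle. Let $F$ be a hamiltonian $k$-regular graph of order $n$ with hamiltonian cycle ${\frak h}$. The key idea is \emph{edge subdivision}: inserting a $2$-valent vertex on an edge $xy$ replaces that edge by a path $x$--$z$--$y$ through a new degree-$2$ vertex $z$. This operation preserves the degrees of all old vertices while adding a new vertex of degree exactly $2$.

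First I would subdivide every edge of ${\frak h}$. Since ${\frak h}$ has exactly $n$ edges (it is a cycle on all $n$ vertices), this adds $n$ new degree-$2$ vertices, yielding a graph of order $2n$ in which every original vertex still has degree $k$ and every new vertex has degree $2$; this establishes $h_{2n}(2,k) \le 1$. For the odd case, I would instead subdivide all but one edge of ${\frak h}$, adding $n-1$ new vertices for a total order of $2n-1$; again the degree set is exactly $\{2, k\}$, giving $h_{2n-1}(2,k) \le 1$. In both cases at least one vertex of each degree is present (the original vertices give degree $k$, and $n \ge 3$ ensures at least one subdivision vertex of degree $2$), so the resulting graph is genuinely $(2,k)$-regular.

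The crux is verifying that the resulting graph has \emph{exactly one} hamiltonian cycle, which forces the lower bound $h \ge 1$ and hence equality. The point is that a degree-$2$ vertex lies on every hamiltonian cycle with both of its incident edges forced. Thus any hamiltonian cycle of the subdivided graph must, at each subdivision vertex $z$ inserted on an edge $xy$, use both edges $xz$ and $zy$, so it traverses $x$--$z$--$y$ exactly as the original edge $xy$ was traversed. Contracting these forced paths gives a bijection between hamiltonian cycles of the subdivided graph and hamiltonian cycles of $F$ that use all the subdivided edges. Since every edge of ${\frak h}$ (or all but one) was subdivided, the only such cycle in $F$ is ${\frak h}$ itself, so the subdivided graph has precisely one hamiltonian cycle.

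I expect no serious obstacle here; the construction is elementary, and the only step requiring a line of care is the bijection argument showing that forcing the subdivision-vertex edges pins the hamiltonian cycle down to the image of ${\frak h}$. In the odd-order case one should note that the single unsubdivided edge of ${\frak h}$ could in principle allow alternative cycles in $F$, but since ${\frak h}$ is the unique hamiltonian cycle through the subdivided edges (it is the only cycle using those $n-1$ prescribed edges, as they already leave only one way to close up into a spanning cycle), uniqueness still holds. This matches the informal description already given in the paragraph preceding the observation, so the argument merely needs to be made precise.
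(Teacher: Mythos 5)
Your proposal is correct and is essentially identical to the paper's argument, which likewise constructs the graphs by inserting a $2$-valent vertex on all (or all but one) edges of a hamiltonian cycle of a $k$-regular graph; you merely spell out the forcing/contraction argument for uniqueness that the paper leaves implicit.
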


We now shift our attention to the semi-cubic case, i.e.\ the values of $h(3,k)$. The case $k = 2$ has been discussed above, and for $k = 3$ we have $h_n(3) = 3$ for all even $n \ge 4$: it follows from Observation~\ref{oddDegreeObservation} that $h_n(3) \ge 3$ and by considering $K_4$ and successively replacing vertices by triangles, one infers $h_n(3) \le 3$. Henceforth, a graph containing exactly one hamiltonian cycle shall be called \emph{uniquely hamiltonian}. For $k = 4$, Entringer and Swart provided a uniquely hamiltonian graph of connectivity~2, see~\cite{ES80}, and Aldred and Thomassen described a 3-connected such graph in~\cite{HA99}. We recall that $h(3,2k+1) \geq 3$ by Observation~\ref{oddDegreeObservation}. We now treat the $(3,2k)$-regular case; the proof of the next statement was described to us by Gunnar Brinkmann~\cite{Br22}.

\begin{theorem}\label{(3,2k)-regularUHG}
For every $\kappa \in \{ 2, 3 \}$ and any positive integer $k$, there are infinitely many $(3,2k)$-regular uniquely hamiltonian graphs of connectivity~$\kappa$.
\end{theorem}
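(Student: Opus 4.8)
The plan is to reduce the whole statement to the construction of a single \emph{gadget} and then to produce infinitely many graphs by gluing copies of it. Call a graph $W$ with two designated non-adjacent vertices $\alpha,\beta$ (its \emph{ports}) a \emph{$(3,2k)$-link} if $\alpha$ and $\beta$ have degree $2$, every other vertex has degree $3$ or $2k$, at least one vertex has degree $2k$, and $W$ has exactly one hamiltonian $\alpha$--$\beta$ path. Given such a link, arranging $m\ge 2$ copies $W^{(1)},\dots,W^{(m)}$ cyclically and adding the edges $\beta^{(i)}\alpha^{(i+1)}$ (indices mod $m$) turns every port into a degree-$3$ vertex and leaves all other degrees untouched; since each copy is attached to the rest only through its two ports, every hamiltonian cycle must traverse each copy by a hamiltonian $\alpha$--$\beta$ path, so the forced uniqueness inside each link propagates to a single hamiltonian cycle of the whole graph. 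As $m$ varies this yields infinitely many $(3,2k)$-regular uniquely hamiltonian graphs, and each $\{\alpha^{(i)},\beta^{(i)}\}$ is a $2$-cut separating the interior of a copy, giving connectivity~$2$. Thus for $\kappa=2$ everything rests on exhibiting one link.

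Before building the link it is worth recording \emph{why} a vertex of degree $2k$ is unavoidable: if all non-port vertices of $W$ had degree $3$, then adding the edge $\alpha\beta$ would make $W$ cubic, and by Smith's theorem the edge $\alpha\beta$ would lie in an even number of hamiltonian cycles; equivalently, $W$ would have an even number of hamiltonian $\alpha$--$\beta$ paths, never exactly one. So the even-degree (degree-$2k$) vertices are forced by parity rather than cosmetic, which is also precisely why the analogous fully cubic statement fails.

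To construct a link I would start from the degree-$2k$ \emph{hub} $z$ and force its traversal. Only two of the $2k$ edges at $z$ may be used by the unique path; the remaining $2k-2$ must be shown to lie in \emph{no} hamiltonian $\alpha$--$\beta$ path. The idea is to let the forcing emanate from the degree-$2$ ports along ``rails'' of degree-$3$ vertices that behave like the boundary of a $2\times t$ ladder, whose only hamiltonian cycle is its boundary with every rung unused: attaching the $2k-2$ spare hub edges to vertices whose two rail edges are already forced into the path renders those spare edges unusable, while the two through-edges of $z$ are threaded into the rails so that the path is compelled to enter and leave $z$ exactly once. The main obstacle is exactly this local verification---proving that the $2k-2$ spare edges at the hub are forced out and that no rerouting through them creates a second hamiltonian $\alpha$--$\beta$ path---and making it uniform in $k$; here I expect either a careful propagation-of-forcing argument or an explicit small example, supported where convenient by the thick/thin-edge figures used elsewhere in the paper.

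Finally, the connectivity-$3$ case needs a different vehicle, since any link-chain has $2$-cuts at the ports. For $k\ge 2$ (note that $k=1$ forces minimum degree $2$, so $\kappa=3$ is then impossible and $\kappa=2$ reduces to the classical $(2,3)$-regular examples such as theta graphs and ladders), I would instead take a $3$-connected uniquely hamiltonian $(3,4)$-regular graph of the Aldred--Thomassen type and replace each degree-$4$ vertex by a degree-$2k$ hub via the same forcing device, checking that $3$-connectivity is preserved; infinitely many such graphs are then obtained by a $3$-connectivity-preserving expansion, in the spirit of the gluing lemma of~\cite{Za22}, rather than by the port-chaining used for $\kappa=2$. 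The hardest point throughout remains guaranteeing uniqueness at the high-degree hubs, i.e.\ that every spare edge at a degree-$2k$ vertex lies on no hamiltonian cycle.
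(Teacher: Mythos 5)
Your reduction of ``infinitely many'' to ``one'' is sound and broadly mirrors what the paper does (gluing along $2$-cuts for $\kappa=2$, triangle expansion for $\kappa=3$), and your parity remark explaining why a vertex of even degree at least $4$ is unavoidable is correct. But there is a genuine gap at the core: the $(3,2k)$-link---equivalently, a single uniquely hamiltonian $(3,2k)$-regular graph---is never actually constructed. You describe a degree-$2k$ hub attached to ``rails'' and then say you ``expect either a careful propagation-of-forcing argument or an explicit small example''; that forcing argument, uniform in $k$, is precisely the content of the theorem, and without it nothing has been proved. Directly forcing all $2k-2$ spare hub edges out of every hamiltonian cycle is hard exactly because each such edge ends at a cubic vertex with two other available edges, so a ladder-boundary forcing argument does not obviously propagate past the hub; the same unresolved step also underlies your $\kappa=3$ plan of ``replacing each degree-$4$ vertex by a degree-$2k$ hub''.

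The paper sidesteps this difficulty by never building a forced hub directly. It inducts on $k$ through graphs with exactly \emph{two} hamiltonian cycles and exactly one non-cubic vertex: the operation $(\dagger)$ splices a copy of the Petersen graph minus a vertex into such a graph, rerouting through an edge $vw$ that lies on exactly one of the two hamiltonian cycles while identifying the third neighbour $x'$ with the high-degree vertex $x$, which raises its degree from $2k$ to $2k+2$ and (by the hamiltonian-path properties of $P-x$ and $P-x-x_i$) preserves ``exactly two hamiltonian cycles''. Uniqueness is achieved only at the very end, by taking two copies of the resulting graph, deleting a suitable cubic vertex from each, and rejoining the six hanging edges with a deliberate mismatch ($a$ to $c'$, $b$ to $a'$, $c$ to $b'$) so that exactly one of the two cycle patterns survives. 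If you wanted to salvage your link-based scheme, you could extract a link \emph{from} such a uniquely hamiltonian graph by deleting an edge of its hamiltonian cycle---but that presupposes the existence result rather than proving it.
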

\begin{proof}
We have dealt with the case $k = 1$ above, so assume henceforth $k > 1$. We prove the case $\kappa = 3$ since the case $\kappa = 2$ is a direct corollary. The following auxiliary result will be useful; its proof is straightforward and left to the reader.

\smallskip

\noindent \textsc{Claim} 1. \emph{Let $P$ be the Petersen graph and $x \in V(P)$ with neighbours $x_1, x_2, x_3$. Then $P - x$ admits no hamiltonian $x_ix_j$-path and $P - x - x_i$ has exactly two hamiltonian $x_jx_k$-paths, where $\{ i,j,k \} = \{ 1, 2, 3 \}$.}

\smallskip

For a positive integer $k$, let $G$ be a $(3,2k)$-regular multigraph with exactly two hamiltonian cycles and exactly one vertex $x$ of degree $2k$. Let $vw \in E(G)$ be such that exactly one of the hamiltonian cycles in $G$ traverses it, and $x$ is not incident to $vw$. Let $H$ be obtained from the Petersen graph by removing a vertex---we call its neighbours $v', w', x'$. Consider the graph obtained from the disjoint union of $G - vw$ and $H$ by joining $v$ to $v'$ and $w$ to $w'$, and identifying $x$ and $x'$; this operation will be called $(\dagger)$. Thus, by Claim~1, a $(3,2k+2)$-regular multigraph with exactly two hamiltonian cycles and exactly one vertex of degree $2k + 2$ is obtained.

\smallskip

\noindent \textsc{Claim} 2. \emph{There exists a $3$-connected $(3,4)$-regular graph with exactly two hamiltonian cycles and exactly one vertex of degree~$4$.}

\smallskip

\noindent \emph{Proof of Claim 2.} This follows by applying $(\dagger)$ to a triangle with one edge doubled. Alternatively, Aldred and Thomassen describe a (larger) such graph in~\cite{HA99}.

\smallskip

\noindent \textsc{Claim} 3. \emph{If there exists a $3$-connected $(3,2k)$-regular graph $G$ with exactly two hamiltonian cycles and exactly one non-cubic vertex, then there are infinitely many $3$-connected $(3,2k)$-regular uniquely hamiltonian graphs.}

\smallskip

\noindent \emph{Proof of Claim 3.} How to produce from $G$ a 3-connected $(3,2k)$-regular graph $H$ with exactly one hamiltonian cycle is not difficult and explained in~\cite{HA99}; we recall the argument for the reader's convenience. There must exist a cubic vertex in $G$ with incident edges $a,b,c$ such that $G$ has a unique hamiltonian cycle through the edges $a,c$ and a unique hamiltonian cycle through the edges $b,c$ but no hamiltonian cycle through $a,b$. We take two copies of $G$ and label the edges $a,b,c$ on one graph and label the corresponding edges $a',b',c'$, on the other graph. Let $v'$ be the vertex incident with $a',b',c'$. Remove $v$ and $v'$ and join the ``hanging'' edges as follows: $a$ to $c'$, $b$ to $a'$, and $c$ to $b'$. To visit all vertices in the first copy of $G$ we have to use the edges $a,c$ or $b,c$. Since the
latter edges are joined to $a',b'$ in the second copy of $G$, we cannot visit all vertices of the second copy. However, going through $a,c$ in the first copy of $G$ to $b',c'$ in the second copy provides the unique hamiltonian cycle. 
Iteratively replacing in $H$ cubic vertices by triangles yields the second statement. This completes the proof of the claim.

\smallskip

By Claims 2 and 3, the proof of the case $\kappa = 3$ is complete. For the connectivity 2 case, consider a 3-connected $(3,2k)$-regular graph $G$ containing exactly one hamiltonian cycle ${\frak h}$. Let $vw \in E({\frak h})$. Let $G'$ be a copy of $G$ with $v' \in V(G')$ corresponding to $v$ and $w' \in V(G')$ corresponding to $w$. Adding to the disjoint union of $G - vw$ and $G' - v'w'$ the edges $vv'$ and $ww'$ yields the statement. \end{proof}

Let $G$ be a cubic hamiltonian graph and ${\frak h}$ some hamiltonian cycle in $G$. The next argument uses the same basic idea as described in the proof of Claim~3. For every vertex $v$ of $G$, there are exactly two edges $a, b$ incident with $v$ and on ${\frak h}$. Let the third edge incident with $v$ be $c$. Consider a $(3,2k)$-regular uniquely hamiltonian graph $H$ and therein a vertex $w$. There are exactly two edges $a', b'$ incident with $w$ and on the hamiltonian cycle of $H$. Let the third edge incident with $w$ be $c'$. Consider $G - v$ and $H - w$, but leave the edges $a,b,c,a',b',c'$ as dangling half-edges. In the disjoint union of $G - v$ and $H - w$, connect $a$ with $a'$; $b$ with $b'$; and $c$ with $c'$. Performing this replacement for all (or indeed all but one) vertices of $G$, we obtain a $(3,2k)$-regular uniquely hamiltonian graph.

Furthermore, we remark that with similar techniques, it can be shown that for any $\kappa \in \{ 2, 3 \}$ and any set $M$ of numbers containing the number 3 and at least one positive even number, there exist infinitely many uniquely hamiltonian graphs of connectivity $\kappa$ for which $M$ is its set of vertex degrees.

A natural question is to determine the order of the smallest $(3,k)$-regular hamiltonian graphs with the fewest hamiltonian cycles for small values of $k$. The smallest open case is $k = 4$. It follows from Theorem~\ref{(3,2k)-regularUHG} that $h(3,2k) = 1$.

The generation algorithm for uniquely hamiltonian graphs from~\cite{GMZ20} can be efficiently restricted to generate uniquely hamiltonian graphs with a given maximum degree. By restricting the algorithm to only generate $(3,4)$-regular graphs, we determined that the smallest uniquely hamiltonian $(3,4)$-regular graphs have 18 vertices. The results are summarised in Observation~\ref{obs:UH_34}. This computation required approximately 30 CPU days and the results were independently verified up to 15 vertices by using GENG~\citep{MP14} to generate $(3,4)$-regular graphs and testing if the generated graphs are uniquely hamiltonian are not. Moreover, we also verified, using an independent program, that the five $(3,4)$-regular graphs on 18 vertices from Observation~\ref{obs:UH_34} indeed only have one hamiltonian cycle.

\begin{observation}\label{obs:UH_34}
The smallest uniquely hamiltonian $(3,4)$-regular have $18$ vertices and there are exactly five such graphs of that order.
\end{observation}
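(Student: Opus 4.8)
The statement is purely enumerative, so the plan is to settle it by an exhaustive, certified computer search, supplemented by elementary arguments that restrict the search space. First I would fix the admissible parameters. Writing $a$ for the number of cubic vertices and $b$ for the number of $4$-valent vertices, the handshake lemma forces $3a+4b$ to be even, so $a$ must be even; since a $(3,4)$-regular graph requires $a\geq 2$ and $b\geq 1$, only orders $n=a+b$ of this form need be considered. A short parity argument further shows that $4$-valent vertices are genuinely indispensable: by the classical (Smith) parity theorem every edge of a cubic graph lies in an even number of hamiltonian cycles, so no cubic graph can be uniquely hamiltonian, and it is exactly the even-degree vertices that let the lollipop/parity obstruction be evaded. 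I would exploit such parity considerations to discard partial graphs that provably cannot extend to a uniquely hamiltonian $(3,4)$-regular graph.

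The core of the proof is the generation step. Rather than generating all $(3,4)$-regular graphs and filtering for unique hamiltonicity---which is hopeless near $n=18$---I would specialise the generator of uniquely hamiltonian graphs from~\cite{GMZ20} so that it builds only graphs of maximum degree $4$, and then restrict its output to those that are exactly $(3,4)$-regular. Using canonical augmentation to avoid isomorph duplication, this produces, up to isomorphism, the complete list of uniquely hamiltonian $(3,4)$-regular graphs of each order directly. Running it for every admissible $n$ from the smallest feasible value up to $18$, I would record that the output is empty for all $n<18$ and consists of precisely five graphs for $n=18$, which establishes both halves of the claim.

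Because a bare computation is only as trustworthy as its implementation, the final and most delicate part is independent verification, and this is where I expect the real difficulty to lie---not in any single clever idea but in guaranteeing the exhaustiveness and correctness of a search that is vulnerable to subtle bugs in the canonicity test or in the hamiltonian-cycle counter. To guard against this I would re-derive the results through a completely separate pipeline: use GENG~\citep{MP14} to generate all $(3,4)$-regular graphs up to $15$ vertices and test each for unique hamiltonicity with an independently written hamiltonian-cycle counter, confirming agreement with the specialised search in that range; and, for the five candidate graphs on $18$ vertices, confirm with a third, independent program that each contains exactly one hamiltonian cycle. Agreement across these independent methods, combined with the parameter and parity restrictions above to bound the search, would complete the proof that the smallest uniquely hamiltonian $(3,4)$-regular graphs have exactly $18$ vertices and that there are exactly five of them.
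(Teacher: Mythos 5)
Your proposal matches the paper's own approach essentially verbatim: the authors restrict the uniquely-hamiltonian-graph generator from~\cite{GMZ20} to maximum degree~$4$, run it up to order~$18$, and independently verify the results up to $15$ vertices via GENG~\citep{MP14} plus a separate hamiltonicity test, as well as re-checking the five order-$18$ graphs with an independent cycle counter. Your additional parity remarks are harmless but not needed; the argument is otherwise the same computational proof with the same cross-validation strategy.
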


The uniquely hamiltonian $(3,4)$-regular graphs from Observation~\ref{obs:UH_34} are shown in Fig.~\ref{fig:UH_34} in the Supplementary material and can also be inspected in the database of interesting graphs from the \textit{House of Graphs}~\cite{HoG} by searching for the keywords ``uniquely hamiltonian (3,4)-graph''.

By using the same approach for searching for uniquely hamiltonian $(3,6)$-graphs we were not able to determine the smallest such graphs, but were able to give the following lower bound. This computation required approximately 8 CPU years. 

\begin{observation}\label{obs:UH_36}
The smallest uniquely hamiltonian $(3,6)$-regular graphs have at least $17$ vertices.
\end{observation}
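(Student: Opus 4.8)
The statement is equivalent to the assertion that no uniquely hamiltonian $(3,6)$-regular graph exists on $n \le 16$ vertices, and since this is an exact threshold unlikely to admit a short structural argument, the plan is an exhaustive but carefully pruned computer search. First I would record the elementary counting constraints that limit the cases to be examined. If such a graph has $a$ vertices of degree $3$ and $b$ of degree $6$, then $a + b = n \le 16$, both $a, b \ge 1$, and the handshake identity forces $3a + 6b$ to be even, whence $a$ is even and so $a \ge 2$; moreover a single degree-$6$ vertex already needs six neighbours, so $n \ge 7$. Because every vertex of a uniquely hamiltonian graph lies on its unique hamiltonian cycle $\h$, each cubic vertex carries exactly one chord (non-$\h$ edge) and each sextic vertex exactly four, which pins down the chord count and is the invariant the generation must respect.

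Rather than enumerate all $(3,6)$-regular graphs on up to $16$ vertices and filter for unique hamiltonicity---there are far too many, as the sextic vertices inflate the edge count dramatically---I would reuse the generator for uniquely hamiltonian graphs of~\cite{GMZ20}, which by design outputs, up to isomorphism, exactly the uniquely hamiltonian graphs, and restrict it to maximum degree $6$ with degrees confined to $\{3,6\}$, exactly as was done for the $(3,4)$-regular case leading to Observation~\ref{obs:UH_34}. The essential feature is that the generator builds a graph around a fixed hamiltonian cycle while adding chords, discarding any partial graph that already admits a second hamiltonian cycle or whose degree sequence can no longer be completed to a $(3,6)$-regular one; the \emph{completeness} of this enumeration up to isomorphism is what turns an empty output into a proof. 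Running it for every admissible pair $(a,b)$ with $a + b \le 16$ and observing that it produces no graph then establishes the bound.

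For correctness I would, as in Section~\ref{sect:haythorpe_table}, cross-validate the search: independently regenerate the $(3,6)$-regular graphs of small order with GENG~\cite{MP14}, test each for unique hamiltonicity by counting hamiltonian cycles with the two independent algorithms described there, and confirm agreement. The main obstacle is computational rather than conceptual: the number of $(3,6)$-regular graphs grows steeply with $n$, so the search is feasible only because the non-uniquely-hamiltonian branches are pruned early, and it is precisely this explosion that prevents pushing past $16$ vertices---hence the method yields only a lower bound here rather than the exact minimum order.
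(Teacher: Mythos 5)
Your proposal matches the paper's method exactly: the authors restrict the uniquely hamiltonian graph generator of~\cite{GMZ20} to $(3,6)$-regular graphs (just as for Observation~\ref{obs:UH_34}), exhaust all orders up to $16$, find no example, and conclude the lower bound of $17$ (a computation they report took about 8 CPU years). Your additional parity and chord-count observations and the GENG cross-validation are sensible supporting checks but do not change the argument.
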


We first focussed on the even case, for which we know that $h(3,2k) = 1$, and now move to the odd case for which we know that $3 \leq h(3,2k+1)$ and will show that $h(3,2k+1) \leq 4$.

Let $G$ be a $(3,2k+1)$-regular graph $G$ with exactly three hamiltonian cycles. Thomason~\cite{Th78} showed that in a graph in which all vertices have odd degree, every edge is traversed by an even number of hamiltonian cycles. Thus, around every vertex in $G$ exactly three of its incident edges are each traversed by exactly two distinct hamiltonian cycles---no other distribution is possible. Hence, a $(3,2k+1)$-regular graph with exactly three hamiltonian cycles must contain a cubic spanning subgraph with three hamiltonian cycles. All cubic graphs up to 32 vertices with exactly three hamiltonian cycles have been described, see Table~7 of~\cite{GMZ20}. By starting from these graphs, we determined that for each of these graphs, every pair of non-adjacent vertices has a hamiltonian path between them. Therefore, we have:

\begin{observation}
All $(3,2k+1)$-regular graphs with exactly three hamiltonian cycles and at most~$32$ vertices are cubic.
\end{observation}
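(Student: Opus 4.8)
The plan is to exploit the cubic spanning subgraph already extracted in the paragraph preceding the statement. First I would fix a graph $G$ with all vertex degrees in $\{3, 2k+1\}$, exactly three hamiltonian cycles $\h_1, \h_2, \h_3$, and at most $32$ vertices, and recall that, by Thomason's parity result~\cite{Th78} applied at each vertex, every edge lies in an even number of the three cycles, so at each vertex exactly three incident edges are each used by exactly two of the $\h_i$. Consequently the edges traversed by at least one $\h_i$ form a cubic spanning subgraph $C$ of $G$ that contains all three cycles. A short argument then shows $C$ has \emph{exactly} three hamiltonian cycles: it contains $\h_1, \h_2, \h_3$, and since $C \subseteq G$ every hamiltonian cycle of $C$ is also one of $G$, so no fourth cycle can appear in $C$.

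Next I would invoke the complete enumeration of cubic graphs on at most $32$ vertices with exactly three hamiltonian cycles recorded in Table~7 of~\cite{GMZ20}. Since $|V(C)| = |V(G)| \le 32$, the subgraph $C$ must be one of these finitely many graphs. The key step is then to verify, for each graph in this list, that \emph{every pair of non-adjacent vertices is joined by a hamiltonian path}; this is precisely the property asserted in the sentence just before the statement.

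Finally I would derive the contradiction that forces $G$ to be cubic. Suppose $G$ is not cubic; then $G$ has a vertex of degree $2k+1 \ge 5$, whence $E(G) \setminus E(C) \neq \emptyset$, and I may pick an edge $e = uv$ in this difference. As $G$ is simple and $e$ is the only edge between $u$ and $v$, the vertices $u$ and $v$ are non-adjacent in $C$, so by the computational step $C$ carries a hamiltonian $uv$-path $P$. Then $P + e$ is a hamiltonian cycle of $G$ using the edge $e \notin E(C)$, and is therefore distinct from each of $\h_1, \h_2, \h_3$, all of which lie in $C$. This contradicts that $G$ has exactly three hamiltonian cycles, so $G$ must be cubic.

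I expect the main obstacle to be the computational step of certifying the hamiltonian-path property for all the cubic graphs appearing in Table~7, rather than the structural reduction, which follows cleanly from the parity argument already in place. Once the cubic spanning subgraph and the enumerated list are in hand, the remaining reasoning is a routine single-edge augmentation argument.
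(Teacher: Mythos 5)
Your proposal is correct and follows essentially the same route as the paper: Thomason's parity theorem forces the three hamiltonian cycles to form a cubic spanning subgraph, which must appear in the enumeration of Table~7 of~\cite{GMZ20}, and the computational check that every pair of non-adjacent vertices in those graphs is joined by a hamiltonian path rules out any additional edge. The only difference is that you spell out the final single-edge augmentation contradiction explicitly, which the paper leaves implicit.
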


By Observation~\ref{oddDegreeObservation}, for every positive integer $k$, every hamiltonian $(3,2k+1)$-regular graph contains at least three hamiltonian cycles. We now prove that the minimum number of hamiltonian cycles in such graphs is at most four and we will show that the graphs described in our proof are the smallest $(3,2k+1)$-regular graphs with exactly four hamiltonian cycles. It would be interesting to determine, for $k > 1$, whether $h(3,2k+1)$ is 3 or 4.

\begin{proposition}\label{(3,2k+1)-regular}
For any $\kappa \in \{ 2, 3 \}$ and every integer $k \ge 2$ there exists a hamiltonian $(3,2k+1)$-regular graph of connectivity~$\kappa$ containing at most four hamiltonian cycles. For $k=2$, the smallest $(3,5)$-regular graph with four hamiltonian cycles has order $6$ and for every integer $k \ge 3$, the smallest $(3,2k+1)$-regular graph with four hamiltonian cycles has order $2k+4$. For every integer $k \geq 2$ there exists a graph of connectivity~$2$ such that this graph is a minimal order $(3,2k+1)$-regular graph with four hamiltonian cycles.
\end{proposition}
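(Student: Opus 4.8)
The plan is to produce, for each $k \ge 2$, an explicit $(3,2k+1)$-regular graph with exactly four hamiltonian cycles and then verify minimality. Since Observation~\ref{oddDegreeObservation} already gives the lower bound $h(3,2k+1) \ge 3$, the real content is the upper bound $h(3,2k+1) \le 4$ together with the order claims. The natural first move is to build a small gadget around a single vertex of degree $2k+1$. I would start from a vertex $x$ of degree $2k+1$ and attach to it a collection of parallel two-path ``petals'' that force the hamiltonian cycle to make essentially only a bounded number of choices as it passes through $x$. Concretely, for the base case $k=2$ the claim is that the smallest $(3,5)$-regular example has order $6$; here one should simply exhibit a graph on $6$ vertices---most plausibly $K_{3,3}$ with one part thickened, or a small graph with one vertex of degree $5$ and the rest cubic---and count that it has exactly four hamiltonian cycles, checking by hand or by the enumeration already described in Section~\ref{sect:haythorpe_table}.

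For the general case $k \ge 3$, I would aim for a construction on $2k+4$ vertices. The idea is to take a short cubic ``backbone'' (a cycle together with a fixed hamiltonian structure, analogous to the $K_4$-with-triangles construction used earlier for $h_n(3)$) and introduce a single high-degree vertex $x$ of degree $2k+1$ joined to the backbone in a controlled way, so that the parity argument of Thomason~\cite{Th78}---exactly three incident edges of each odd-degree vertex lie on an even number of hamiltonian cycles---pins down how the cycles thread through $x$. The degree $2k+1$ forces $x$ to have $k$ ``chords'' that are used by no hamiltonian cycle and that must be distributed so that precisely four global cycles survive. The key verification is then a direct hamiltonian-cycle count on this explicit family, which I would organise by case analysis on which pair of edges at $x$ is used, reducing the count on $G$ to counting hamiltonian paths in $G-x$ between prescribed pairs of neighbours of $x$---exactly the nearly-regular viewpoint of Observation~\ref{nearlyRegularObservation}.

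For the connectivity statements, I would handle $\kappa = 3$ and $\kappa = 2$ separately as in the proof of Theorem~\ref{(3,2k)-regularUHG}: a $2$-connected example is obtained cheaply by a gluing/splitting argument (deleting an edge $vw$ on the hamiltonian cycle, taking a copy, and reconnecting through a $2$-cut), and this is what justifies the final sentence that a minimal-order example of connectivity~$2$ exists for every $k \ge 2$. Establishing that the order $2k+4$ is \emph{smallest} is where I expect the main difficulty: the upper-bound construction is routine, but minimality requires ruling out all $(3,2k+1)$-regular graphs with four hamiltonian cycles on fewer than $2k+4$ vertices. I would argue this via the cubic-spanning-subgraph reduction already established in the preceding Observation (every such graph contains a cubic spanning subgraph carrying the hamiltonian-cycle structure) combined with the constraint that a degree-$(2k+1)$ vertex needs at least $2k+1$ neighbours, forcing a lower bound on the order; pushing this counting argument to the exact threshold $2k+4$, and separating it from the exceptional base value $6$ at $k=2$, is the delicate step.
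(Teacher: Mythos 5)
Your overall architecture (explicit small example for $k=2$, an explicit family on $2k+4$ vertices for $k\ge 3$ analysed via hamiltonian paths in the graph minus a special vertex, separate treatments of $\kappa=2$ and $\kappa=3$) matches the paper's, but two of your proposed steps would fail as described. First, the minimality argument: you propose to rule out orders below $2k+4$ via the cubic-spanning-subgraph reduction from the preceding Observation together with the fact that a degree-$(2k+1)$ vertex needs $2k+1$ neighbours. The former does not apply here --- that reduction rests on Thomason's parity theorem forcing exactly three edges at each vertex to lie on exactly two hamiltonian cycles, which is specific to graphs with exactly \emph{three} hamiltonian cycles; with four cycles the incidence pattern $4+2+2$ is also possible and no cubic spanning subgraph is forced. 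The latter only yields order $\ge 2k+2$, leaving a gap of one admissible order. The paper closes this gap by a direct case analysis on a hypothetical $(3,2k+1)$-regular graph $G$ on $2k+2$ vertices according to whether it has one, two, or three vertices of degree $2k+1$ (more than three is impossible): with one such vertex $v$, $G-v$ is $2$-regular and $G$ has $0$ or $2k+1\ge 7$ hamiltonian cycles; with two or three, the remaining $\ge 2k-1$ vertices induce a graph of maximum degree at most one, so for $k\ge 4$ no hamiltonian cycle can exist (and $k=3$ needs a separate computer check that $h_8(3,7)=7$); order $2k+3$ is excluded by parity. None of this is recoverable from your sketch.

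Second, your claim that the connectivity-$2$ statement (including the final sentence about a \emph{minimal-order} example of connectivity $2$) follows from a two-copy gluing through a $2$-cut is wrong: that operation roughly doubles the order, so it can never produce a graph of the minimal order $2k+4$. In the paper the explicit graph $H$ on $2k+4$ vertices already has connectivity $2$ (the pair $\{c,d\}$ is a cut, which is also what forces every hamiltonian cycle through the edges $a_0c$ and $b_0c$ and reduces the count to four hamiltonian $a_0b_0$-paths in $H-c$), so the minimality and connectivity claims are witnessed by the same graph. For $\kappa=3$ you gesture at Theorem~\ref{(3,2k)-regularUHG} but omit the actual mechanism: the paper applies the Petersen-based operation $(\dagger)$ to a cubic graph with exactly three hamiltonian cycles, using that every edge there lies on exactly two of them, obtaining a $3$-connected $(3,5)$-regular graph with exactly four hamiltonian cycles, and iterates since the resulting graph again has an edge on exactly two hamiltonian cycles. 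That specific bookkeeping (three cycles becoming four, then four staying four under iteration) is the content you would still need to supply.
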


\begin{proof}
The lower bound is given by Observation~\ref{oddDegreeObservation}.

For the connectivity~2 case and $k=2$, Fig.~\ref{h(3,5)} shows a $(3,5)$-regular graph on 6 vertices of connectivity~2 containing precisely four hamiltonian cycles (this order is the smallest possible order, because the smallest $(3,5)$-regular graph has order 6). For the connectivity~2 case and $k \geq 3$, let $H$ be the $(3,2k+1)$-regular graph on $2k+4$ vertices shown in Fig.~\ref{h(3,2k+1)}. Note that every hamiltonian cycle of $H$ contains the edges $a_0c$ and $b_0c$, because $H-c-d$ is disconnected. The graph $H-c$ has precisely four hamiltonian paths between $a_0$ and $b_0$ such that $H$ has precisely four hamiltonian cycles. Since $H$ is hamiltonian and $H-c-d$ is disconnected, $H$ indeed has connectivity~2. To show the minimality of $H$, let $G$ be a $(3,2k+1)$-regular graph on $2k+2$ vertices. We will now show that $G$ has either zero or strictly more than four hamiltonian cycles based on the following case distinction.

If $G$ has precisely one vertex $v$ of degree $2k+1$, then $G-v$ is a 2-regular graph on $2k+1 \geq 7$ vertices. If $G-v$ is disconnected, $G$ does not have any hamiltonian cycles and otherwise $G$ has precisely $2k+1$ hamiltonian cycles.

If $G$ has precisely two (three) vertices $v$, $w$ (and $x$), then $G-v-w$ ($G-v-w-x$) has $2k$ ($2k-1$) vertices of degree~1 (degree~0). For $k=3$, we verified by computer that $h_{8}(3,7) = 7$. For $k \geq 4$, note that any hamiltonian cycle in $G$ contains at most two consecutive vertices in $G-v-w$ ($G-v-w-x$). However, there are $2k \geq 8$ ($2k-1 \geq 7$) vertices in $G-v-w$ ($G-v-w-x$) such that $G$ cannot have any hamiltonian cycles.

Finally, $G$ cannot have more than three vertices of degree $2k+1$, because $G$ is a $(3,2k+1)$-regular graph on $2k+2$ vertices.

There are no $(3,2k+1)$-regular graphs on $2k+3$ vertices, because the sum of the degrees has to be even. This proves the minimality of $H$.

For the connectivity~3 case, consider a cubic graph $G$ with exactly three hamiltonian cycles---in such a graph every edge is traversed by exactly two hamiltonian cycles~\cite{Th78}. We apply $(\dagger)$ (as defined in the proof of Theorem~\ref{(3,2k)-regularUHG}) to any edge of $G$ and any vertex, and obtain a 3-connected $(3,5)$-regular graph containing exactly four hamiltonian cycles. In such a graph, every edge is traversed by an even number of hamiltonian cycles~\cite{Th78}, so there must exist an edge traversed by exactly two hamiltonian cycles. Therefore, we can iterate this procedure until a 3-connected $(3,2k+1)$-regular graph containing exactly four hamiltonian cycles is obtained.
\end{proof}

\begin{figure}[h!]
\centering
\begin{subfigure}{0.4\linewidth}
\includegraphics[height=40mm]{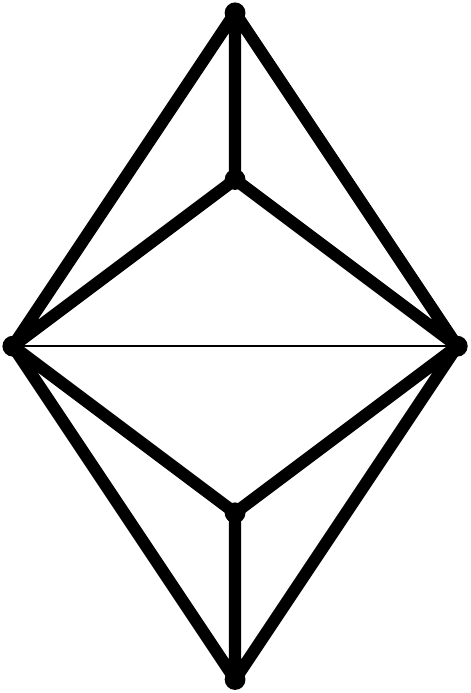}
\caption{}\label{h(3,5)}
\label{t1Subfigure}
\end{subfigure}%
\begin{subfigure}{0.4\linewidth}
  \includegraphics[height=80mm]{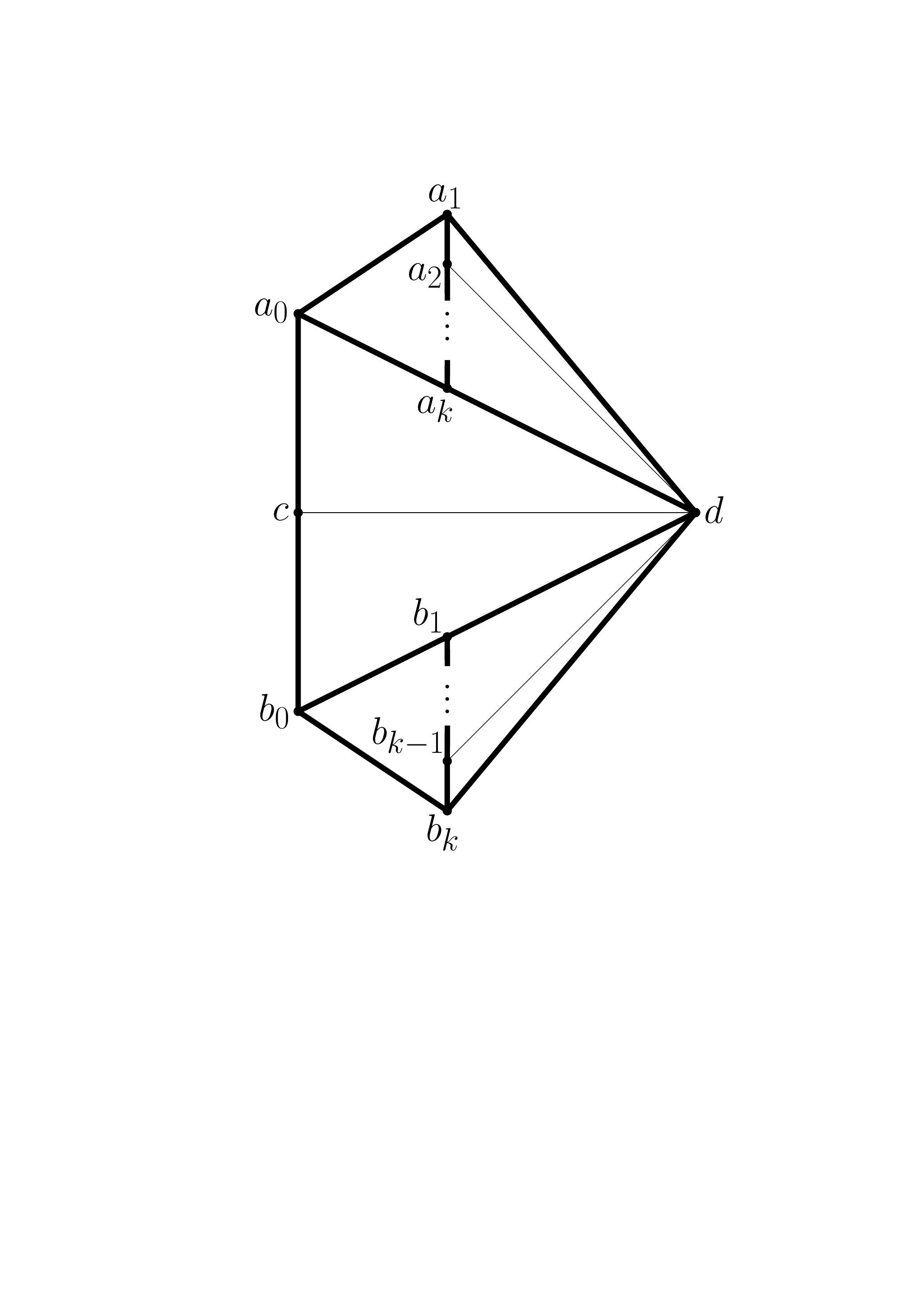}
  \caption{}\label{h(3,2k+1)}
\end{subfigure}
\caption{(a) A $(3,5)$-regular graph of connectivity~2 containing precisely four hamiltonian cycles. (b) A $(3,2k+1)$-regular graph of connectivity~2 on $2k+4$ vertices with exactly four hamiltonian cycles.}
\label{FILLINLABEL}
\end{figure}

\section*{Acknowledgements}
We thank Gunnar Brinkmann for valuable feedback concerning this manuscript, and in particular for his proof of Theorem~\ref{(3,2k)-regularUHG}. We are grateful to Jarne Renders for providing us with an independent algorithm to compute the number of hamiltonian cycles in a graph.

We gratefully acknowledge the support provided by the ORDinL project (FWO-SBO S007318N, Data Driven Logistics, 1/1/2018 - 31/12/2021). This research received funding from the Flemish Government under the ``Onderzoeksprogramma Artifici\"{e}le Intelligentie (AI) Vlaanderen'' programme.
The research of Jan Goedgebeur was supported by Internal Funds of KU Leuven.
The research of On-Hei Solomon Lo was supported by a Postdoctoral Fellowship of Japan Society for the Promotion of Science and by Natural Sciences and Engineering
Research Council of Canada.
The research of Ben Seamone was supported by Natural Sciences and Engineering
Research Council of Canada.
The research of Carol T.~Zamfirescu was supported by a Postdoctoral Fellowship of the Research Foundation - Flanders (FWO).

The computational resources and services used in this work were provided by the VSC (Flemish Supercomputer Center), funded by the Research Foundation - Flanders (FWO) and the Flemish Government – department EWI.

\newpage

\section*{Appendix}

\subsection*{Figures used in the proof of Theorem~\ref{variationTheorem}}


 	\begin{figure}[h!]
	\centering
  \includegraphics[width=0.8\linewidth]{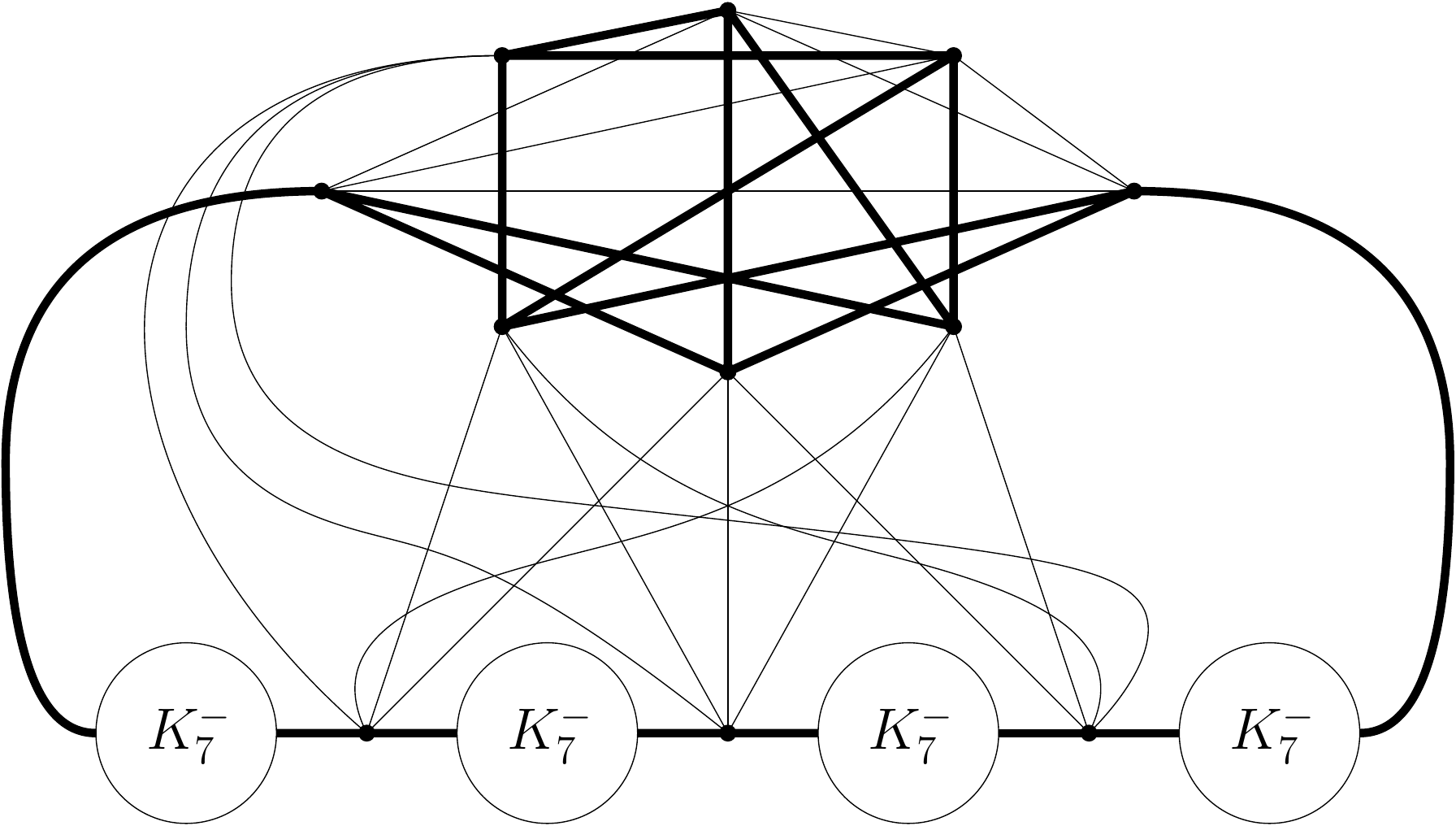}
\caption{A 6-regular graph on $39$ vertices with exactly $2^{13}\cdot 3^{4}\cdot 5^{4}$ hamiltonian cycles.}
\label{sixRegularVariation}       
\end{figure}
 	\begin{figure}[h!]
	\centering
  \includegraphics[width=0.8\linewidth]{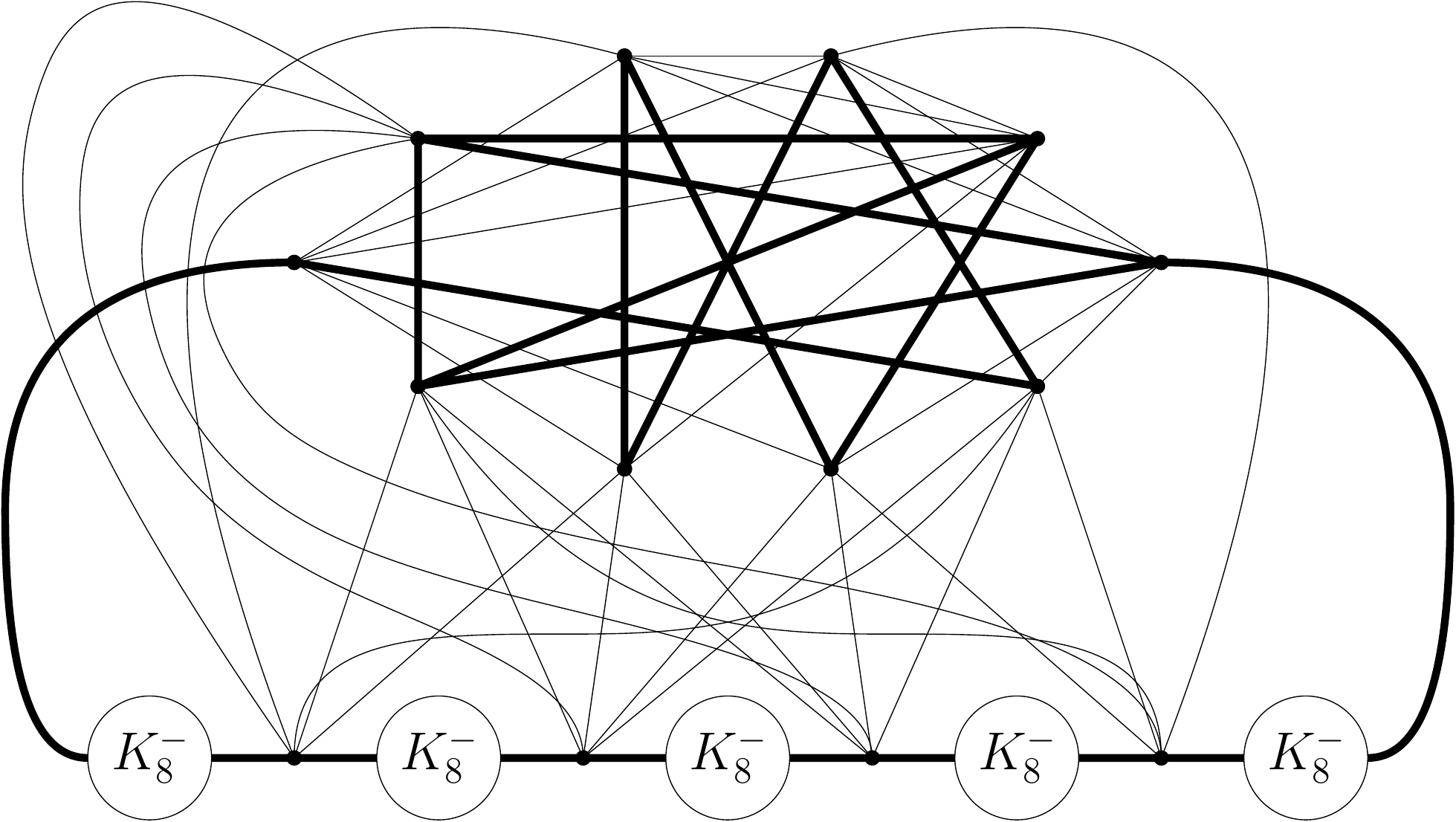}
\caption{A 7-regular graph on $54$ vertices with exactly $2^{21}\cdot3^{10} \cdot5^{5}$ hamiltonian cycles.}
\label{sevenRegularVariation}       
\end{figure}

\newpage

\section*{Supplementary material}

\subsection*{Graph counts related to Thomassen's independent dominating set method from Section~\ref{sect:rigd}}

\begin{table}[h!]
\centering
\begin{threeparttable}
	\begin{tabular}{c  r  r }
		\hline
		\noalign{\smallskip}
		Order & General & Bipartite\\
		\noalign{\smallskip}
		\hline
		\noalign{\smallskip}
		$\leq 8$ & $0$ & $0$\\
		$10$ & $110$ & $10$\\
		$12$ & $48\,577$ & $11\,180$\\
		\noalign{\smallskip}
		\hline
	\end{tabular}
	\end{threeparttable}	
\caption{Counts of all pairs $(G',\frak h)$ where $G'$ is a 3-regular graph and ${\frak h}$ is a hamiltonian cycle in its complement such that no minimal dominating set in $G'$ is ${\frak h}$-independent. The counts listed in the last column have the additional restriction that $G'$ is bipartite.
}
\label{table:counts_noRIGDSet_5reg}
\end{table}

\begin{table}[h!]
\centering
\begin{threeparttable}
	\begin{tabular}{c  r  r }
		\hline
		\noalign{\smallskip}
		Order & General & Bipartite\\
		\noalign{\smallskip}
		\hline
		\noalign{\smallskip}
		$\leq 13$ & $0$ & $0$\\
		$14$ & $7$ & $0$\\
		$15$ & $469$ & -\\		
		$16$ & ? & 3\\
		$17$ & ? & -\\
		$18$ & ? & 0\\
		$19$ & ? & -\\
		$20$ & ? & 0\\				
		\noalign{\smallskip}
		\hline
	\end{tabular}
	\end{threeparttable}	
\caption{Counts of all pairs $(G,\frak h)$ where $G$ is a 4-regular graph and ${\frak h}$ is a hamiltonian cycle in its complement such that no minimal dominating set in $G$ is ${\frak h}$-independent. The counts listed in the last column have the additional restriction that $G$ is bipartite.
}
\label{table:counts_noRIGDSet_6reg}
\end{table}

\newpage

\subsection*{Figures of the smallest uniquely hamiltonian \boldmath$(3,4)$-regular graphs from Observation~\ref{obs:UH_34}}

\begin{figure}[htb!]
	\centering
\begin{tikzpicture}[main_node/.style={circle,fill,draw,minimum size=0.5em,inner sep=.5pt]},scale=0.6]

\node[main_node] (0) at (0.8352681534044679, 0.048937304446440066) {};
\node[main_node] (1) at (1.6589333244843996, 1.235595757531027) {};
\node[main_node] (2) at (1.8098061849511238, 1.8861444018887408) {};
\node[main_node] (3) at (-0.9504792105886657, -0.20478094770215627) {};
\node[main_node] (4) at (-0.36517022872600124, -0.7620008317261995) {};
\node[main_node] (5) at (-2.2624241765999953, -1.0672871397830264) {};
\node[main_node] (6) at (-3.5802390642010575, -0.47816282396302423) {};
\node[main_node] (7) at (-4.269042636709932, 0.40558421749167195) {};
\node[main_node] (8) at (-2.719595680651345, 1.904918597092582) {};
\node[main_node] (9) at (-3.8824547460838694, 3.740943385186423) {};
\node[main_node] (10) at (-4.706196005566213, 2.554421509057696) {};
\node[main_node] (11) at (-4.857142857142858, 1.9037699559539742) {};
\node[main_node] (12) at (-1.9824749627221188, 3.851708285285943) {};
\node[main_node] (13) at (-2.682215080170673, 4.5518773790285145) {};
\node[main_node] (14) at (-0.7849541984835988, 4.857142857142858) {};
\node[main_node] (15) at (0.5329359427866187, 4.267998874395575) {};
\node[main_node] (16) at (1.2215766530949423, 3.3840926600126546) {};
\node[main_node] (17) at (-0.3278704492886977, 1.884715158021597) {};

 \path[draw, thick]
(0) edge node {} (1) 
(0) edge node {} (4) 
(0) edge node {} (17) 
(1) edge node {} (2) 
(1) edge node {} (14) 
(2) edge node {} (3) 
(2) edge node {} (16) 
(3) edge node {} (4) 
(3) edge node {} (5) 
(4) edge node {} (5) 
(4) edge node {} (15) 
(5) edge node {} (6) 
(5) edge node {} (10) 
(6) edge node {} (7) 
(6) edge node {} (13) 
(7) edge node {} (8) 
(7) edge node {} (11) 
(8) edge node {} (9) 
(8) edge node {} (17) 
(9) edge node {} (10) 
(9) edge node {} (13) 
(10) edge node {} (11) 
(11) edge node {} (12) 
(12) edge node {} (13) 
(12) edge node {} (14) 
(13) edge node {} (14) 
(14) edge node {} (15) 
(15) edge node {} (16) 
(16) edge node {} (17) 
;

\end{tikzpicture}	
\qquad
\begin{tikzpicture}[main_node/.style={circle,fill,draw,minimum size=0.5em,inner sep=.5pt]},scale=0.52]

\node[main_node] (0) at (1.5019798802589213, 0.6584893444606008) {};
\node[main_node] (1) at (1.1285251341438762, -0.4938881013124483) {};
\node[main_node] (2) at (0.2678619906848212, -1.2989934294425411) {};
\node[main_node] (3) at (-0.45882139191070026, 0.37411929438002156) {};
\node[main_node] (4) at (-2.23937620417457, -0.7169735489886673) {};
\node[main_node] (5) at (-2.2487068911402597, -1.3999016864867533) {};
\node[main_node] (6) at (-2.2793445722684407, -2.1237281972614186) {};
\node[main_node] (7) at (-4.857142857142858, -0.13739018491174448) {};
\node[main_node] (8) at (-3.149123771467701, 0.18820931147823972) {};
\node[main_node] (9) at (-4.546921085234617, 1.704372793047226) {};
\node[main_node] (10) at (-4.301554790769931, 2.9136609392019963) {};
\node[main_node] (11) at (-3.441023063271386, 3.718153460172287) {};
\node[main_node] (12) at (-2.885979638697187, 2.358221056965313) {};
\node[main_node] (13) at (-0.9339361793843719, 3.135719214425704) {};
\node[main_node] (14) at (-1.9105970110823254, 3.8190917672484925) {};
\node[main_node] (15) at (-1.737339988164189, 4.857142857142858) {};
\node[main_node] (16) at (1.0548871313902255, 2.985350398008154) {};
\node[main_node] (17) at (-2.2959217925581954, 1.54531334645144) {};

 \path[draw, thick]
(0) edge node {} (1) 
(0) edge node {} (5) 
(0) edge node {} (17) 
(1) edge node {} (2) 
(1) edge node {} (8) 
(2) edge node {} (3) 
(2) edge node {} (6) 
(3) edge node {} (4) 
(3) edge node {} (16) 
(4) edge node {} (5) 
(4) edge node {} (8) 
(5) edge node {} (6) 
(6) edge node {} (7) 
(7) edge node {} (8) 
(7) edge node {} (12) 
(8) edge node {} (9) 
(9) edge node {} (10) 
(9) edge node {} (14) 
(10) edge node {} (11) 
(10) edge node {} (17) 
(11) edge node {} (12) 
(11) edge node {} (15) 
(12) edge node {} (13) 
(13) edge node {} (14) 
(13) edge node {} (17) 
(14) edge node {} (15) 
(15) edge node {} (16) 
(16) edge node {} (17) 
;

\end{tikzpicture}
\qquad
\begin{tikzpicture}[main_node/.style={circle,fill,draw,minimum size=0.5em,inner sep=.5pt]},scale=0.53]

\node[main_node] (0) at (1.3489405832771286, 1.0178792916809163) {};
\node[main_node] (1) at (0.9443624764754297, -0.197455555764515) {};
\node[main_node] (2) at (0.07349518397716892, -0.9392761774255298) {};
\node[main_node] (3) at (-0.4879917643211389, 0.5936121946505857) {};
\node[main_node] (4) at (-2.420642767678236, -0.4083191670280293) {};
\node[main_node] (5) at (-1.769503148828262, -1.005968317995677) {};
\node[main_node] (6) at (-2.6029225344416043, -1.810224708519553) {};
\node[main_node] (7) at (-4.726419073131732, 0.4621964248914754) {};
\node[main_node] (8) at (-3.4685623110159765, 0.7659475024939151) {};
\node[main_node] (9) at (-4.857142857142858, 1.5630607388999334) {};
\node[main_node] (10) at (-4.771552429818314, 2.9388560351668724) {};
\node[main_node] (11) at (-3.996651742831611, 4.298489247290049) {};
\node[main_node] (12) at (-3.558791042000397, 3.7385592941860786) {};
\node[main_node] (13) at (-2.040822021521665, 3.4250988219056246) {};
\node[main_node] (14) at (-2.2967519264893332, 4.031227196902193) {};
\node[main_node] (15) at (-0.47540753553456705, 4.857142857142858) {};
\node[main_node] (16) at (1.3392345390307072, 3.2355681212448153) {};
\node[main_node] (17) at (0.10637410520511636, 2.8465767056775926) {};

 \path[draw, thick]
(0) edge node {} (1) 
(0) edge node {} (5) 
(0) edge node {} (17) 
(1) edge node {} (2) 
(1) edge node {} (8) 
(2) edge node {} (3) 
(2) edge node {} (6) 
(3) edge node {} (4) 
(3) edge node {} (16) 
(4) edge node {} (5) 
(4) edge node {} (8) 
(5) edge node {} (6) 
(6) edge node {} (7) 
(7) edge node {} (8) 
(7) edge node {} (9) 
(7) edge node {} (12) 
(8) edge node {} (9) 
(9) edge node {} (10) 
(9) edge node {} (14) 
(10) edge node {} (11) 
(10) edge node {} (17) 
(11) edge node {} (12) 
(11) edge node {} (15) 
(12) edge node {} (13) 
(13) edge node {} (14) 
(13) edge node {} (17) 
(14) edge node {} (15) 
(15) edge node {} (16) 
(16) edge node {} (17) 
;

\end{tikzpicture}

\bigskip

\begin{tikzpicture}[main_node/.style={circle,fill,draw,minimum size=0.5em,inner sep=.5pt]},scale=0.53]

\node[main_node] (0) at (1.8095291491165888, 1.5470449254096619) {};
\node[main_node] (1) at (1.56019225702003, 0.4084370250407998) {};
\node[main_node] (2) at (0.786794000818718, -0.8460755943832616) {};
\node[main_node] (3) at (-0.0022062514028835523, -1.4370628446376523) {};
\node[main_node] (4) at (-1.138123259158874, -1.6523945931988404) {};
\node[main_node] (5) at (-2.024160065102416, -1.7657015524512756) {};
\node[main_node] (6) at (-2.9370488010901132, -1.323459307808765) {};
\node[main_node] (7) at (-2.9678502983305197, 0.8358870446565665) {};
\node[main_node] (8) at (-3.738062562556978, 0.12231632185737595) {};
\node[main_node] (9) at (-4.857142857142858, 1.5440291303851004) {};
\node[main_node] (10) at (-4.6077666088555365, 2.682790612375079) {};
\node[main_node] (11) at (-3.834345034059446, 3.937413748808215) {};
\node[main_node] (12) at (-3.0738339414507987, 4.499412647058845) {};
\node[main_node] (13) at (-1.9522560921427594, 4.700775132533947) {};
\node[main_node] (14) at (-1.023462057333651, 4.857142857142858) {};
\node[main_node] (15) at (-0.11057095975559994, 4.41456712023003) {};
\node[main_node] (16) at (-0.07956226795593446, 2.2554861976935907) {};
\node[main_node] (17) at (0.6905492811291136, 2.9689070362640386) {};

 \path[draw, thick]
(0) edge node {} (1) 
(0) edge node {} (5) 
(0) edge node {} (17) 
(1) edge node {} (2) 
(1) edge node {} (8) 
(2) edge node {} (3) 
(2) edge node {} (6) 
(3) edge node {} (4) 
(3) edge node {} (16) 
(4) edge node {} (5) 
(4) edge node {} (8) 
(5) edge node {} (6) 
(6) edge node {} (7) 
(7) edge node {} (8) 
(7) edge node {} (12) 
(7) edge node {} (16) 
(8) edge node {} (9) 
(9) edge node {} (10) 
(9) edge node {} (14) 
(10) edge node {} (11) 
(10) edge node {} (17) 
(11) edge node {} (12) 
(11) edge node {} (15) 
(12) edge node {} (13) 
(13) edge node {} (14) 
(13) edge node {} (17) 
(14) edge node {} (15) 
(15) edge node {} (16) 
(16) edge node {} (17) 
;

\end{tikzpicture}
\qquad
\begin{tikzpicture}[main_node/.style={circle,fill,draw,minimum size=0.5em,inner sep=.5pt]},scale=0.57]

\node[main_node] (0) at (1.8094329018427038, 1.9061458956048358) {};
\node[main_node] (1) at (1.2924545632668636, 0.034537238702179174) {};
\node[main_node] (2) at (0.7584297013301446, 1.0472808327236982) {};
\node[main_node] (3) at (0.3903154758049556, -0.5335758919016733) {};
\node[main_node] (4) at (-1.0662591259658143, -1.2746797702046955) {};
\node[main_node] (5) at (-2.361701489381489, -1.4295347117768988) {};
\node[main_node] (6) at (-3.263285910900071, -0.841394396930129) {};
\node[main_node] (7) at (-4.062310475438718, -0.22829761841482288) {};
\node[main_node] (8) at (-4.602494548299319, 0.6412684544586176) {};
\node[main_node] (9) at (-4.857142857142858, 1.5231535916121617) {};
\node[main_node] (10) at (-4.33854492615805, 3.3929116242675086) {};
\node[main_node] (11) at (-3.788370584588046, 2.505910075387783) {};
\node[main_node] (12) at (-3.436638598411978, 4.017301449530823) {};
\node[main_node] (13) at (-1.9811419663804273, 4.7019469690767615) {};
\node[main_node] (14) at (-0.6857025014859501, 4.857142857142858) {};
\node[main_node] (15) at (0.1881831561001608, 4.355287962901201) {};
\node[main_node] (16) at (1.0161551041513661, 3.6135937355473153) {};
\node[main_node] (17) at (1.6372837853314035, 2.957120797517044) {};

 \path[draw, thick]
(0) edge node {} (1) 
(0) edge node {} (14) 
(0) edge node {} (17) 
(1) edge node {} (2) 
(1) edge node {} (3) 
(2) edge node {} (3) 
(2) edge node {} (13) 
(2) edge node {} (16) 
(3) edge node {} (4) 
(3) edge node {} (15) 
(4) edge node {} (5) 
(4) edge node {} (11) 
(5) edge node {} (6) 
(5) edge node {} (9) 
(6) edge node {} (7) 
(6) edge node {} (12) 
(7) edge node {} (8) 
(7) edge node {} (11) 
(8) edge node {} (9) 
(8) edge node {} (17) 
(9) edge node {} (10) 
(10) edge node {} (11) 
(10) edge node {} (12) 
(11) edge node {} (12) 
(12) edge node {} (13) 
(13) edge node {} (14) 
(14) edge node {} (15) 
(15) edge node {} (16) 
(16) edge node {} (17) 
;

\end{tikzpicture}

\caption{The smallest uniquely hamiltonian $(3,4)$-regular graphs.}

\label{fig:UH_34}
\end{figure}
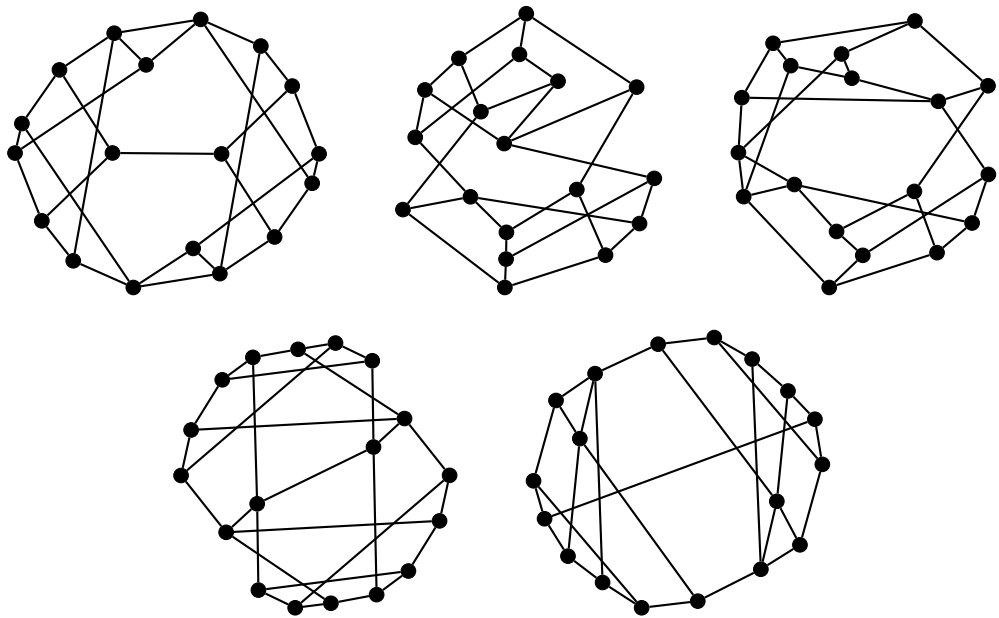

\end{document}